\newcommand{\gyd}{{^{\ku C_{m^{2}}}_{\ku C_{m^{2}}}\mathcal{YD}}}
\def\ra{\rightarrow}
\def\va{\varepsilon}
\def\a{\alpha}
\def\b{\beta}
\def\l{\lambda}
\newcommand{\Tw}{{\text{Tw}}}
\newcommand{\otb}{{\overline{\otimes}}}
\newcommand{\otk}{{\otimes_{\ku}}}
\newcommand{\Mo}{{\mathcal M}}
\newcommand{\No}{{\mathcal N}}
\newcommand{\mo}{{\mathcal M}}
\newcommand{\nic}{{\mathfrak B}}
\newcommand{\fs}{{\mathfrak f}}
\newcommand{\js}{{\mathfrak J}}
\newcommand{\ot}{{\otimes}}
\newcommand{\kc}{{\mathcal K}}
\newcommand{\Ac}{{\mathcal A}}
\newcommand{\ca}{{\mathcal C}}
\newcommand{\Do}{{\mathcal D}}
\newcommand{\Fc}{{\mathcal F}}
\newcommand{\Gc}{{\mathcal G}}
\newcommand{\op}{\rm{op}}
\newcommand{\ku}{{\Bbbk}}
\newcommand{\Na}{{\mathbb N}}
\newcommand{\uno}{{\mathbf 1}}
\newcommand{\sy}{{\mathbb S}}
\newcommand{\id}{\mbox{\rm id\,}}
\newcommand{\Aut}{\mbox{\rm Aut\,}}
\newcommand{\Rad}{\mbox{\rm Rad\,}}
\newcommand{\Vect}{\mbox{\rm Vect\,}}
\newcommand{\Mod}{\mbox{\rm Mod\,}}
\newcommand{\Fun}{\operatorname{Hom}}
\newcommand\Rep{\operatorname{Rep}}
\newcommand\Hom{\operatorname{Hom}}
\newcommand{\End}{\operatorname{End}}
\newcommand{\Id}{\mathop{\rm Id}}
\newcommand{\gr}{\mbox{\rm gr\,}}
\renewcommand{\_}[1]{\mbox{$_{\left( #1 \right)}$}}
\theoremstyle{plain}
\numberwithin{equation}{section}
\newtheorem{teo}{Theorem}[section]
\newtheorem{lema}[teo]{Lemma}
\newtheorem{cor}[teo]{Corollary}
\newtheorem{prop}[teo]{Proposition}
\theoremstyle{definition}
\newtheorem{defi}[teo]{Definition}
  \newtheorem{exa}[teo]{Example}
\theoremstyle{remark}
\newtheorem{rmk}[teo]{Remark}
\def\pf{\begin{proof}}
\def\epf{\end{proof}}
\theoremstyle{remark}
\begin{document}

\title[Module categories over  finite pointed tensor categories]{Module categories over  finite pointed  tensor categories}
\author[Galindo, Mombelli]{C\'esar Galindo and
Mart\'\i n Mombelli }
\thanks{ The work of M.M. was  supported by
 CONICET, Secyt (UNC), Mincyt (C\'ordoba) Argentina}
\address{ Departamento de Matem\'aticas, Universidad de los Andes,
\newline \indent Carrera 1 N${}^\circ$ 18A - 12  Bogot\'a, Colombia}
\email{cn.galindo1116@uniandes.edu.co, cesarneyit@gmail.com
\newline \indent\emph{URL:}\/ http://matematicas.uniandes.edu.co/$\sim$cgalindo/}
\address{ Facultad de Matem\'atica, Astronom\'\i a y F\'\i sica, \newline
\indent Universidad Nacional de C\'ordoba, \newline \indent Medina Allende s/n,
(5000) Ciudad Universitaria, \newline \indent C\'ordoba, Argentina}
\email{martin10090@gmail.com, mombelli@mate.uncor.edu
\newline \indent\emph{URL:}\/ http://www.mate.uncor.edu/$\sim$mombelli}

\begin{abstract} We study exact module categories over the representation categories of
 finite-dimensional quasi-Hopf algebras. As a consequence we classify exact module categories
 over some families of pointed tensor categories with cyclic group of invertible objets of order $p$,
 where $p$ is a prime number.

\bigbreak
\bigbreak

{\em Mathematics Subject Classification (2010): 18D10, 16W30, 19D23.}

{\em Keywords: Tensor category, module category, quasi-Hopf algebra.}
\end{abstract}

\date{\today}
\maketitle

\section*{Introduction}

For a given tensor category $\ca$ a \emph{module category} over $\ca$, or a $\ca$-\emph{module}, is the categorification of the notion of module over a ring, it consist of an Abelian category $\Mo$ together with a biexact functor $\ot:\ca\times \Mo\to \Mo$ satisfying natural associativity and unit axioms. A module category $\Mo$ is \emph{exact} \cite{eo} if for any projective object $P\in \ca$ and any $M\in \Mo$ the object $P\otimes M$ is again projective.

\medbreak

The notion of module category has been used with profit in the theory of tensor categories,
 see \cite{DGNO},\cite{ENO1}, \cite{ENO2}. Interestingly, the notion of module categories is related
 with diverse areas of mathematics and mathematical physics such as subfactor theory \cite{Oc}, \cite{BEK};
 extensions of vertex algebras \cite{KO}, Calabi-Yau algebras \cite{Gi},  Hopf algebras \cite{N},
affine Hecke algebras \cite{BO} and conformal field theory, see for example
 \cite{BFS}, \cite{CS1}, \cite{CS2}, \cite{FS1}, \cite{FS2}, \cite{O1}.

\medbreak

 The classification of exact module categories over a given tensor category was undertaken by several authors:
 \begin{itemize}
  \item[1.] When $\ca$ is the semisimple quotient of $U_q(\mathfrak{sl}_2)$ \cite{Oc},
\cite{KO}, \cite{eo2},
\item[2.] over the category of finite-dimensional $SL_q(2)$-comodules \cite{O3},
        \item[3.] over the tensor categories of representations of finite
supergroups \cite{eo},
  \item[4.] for any group-theoretical tensor category
\cite{O2},
 \item[5.] over the Tambara-Yamagami categories \cite{Ga2}, \cite{MM},
\item[6.]  over the Hageerup fusion categories \cite{GS},

  \item[7.] over $\Rep(H)$, where $H$ is a lifting of a
quantum linear space \cite{Mo2}.
\end{itemize}

In this paper we are concerned with the classification of exact module categories over some families of finite non-semisimple pointed tensor categories that are not equivalent to the representation categories of Hopf algebras.

\medbreak

An object $X$ in a tensor category is invertible if there is another object $Y$ such that $X\ot Y\simeq \uno \simeq Y\ot X$. A \emph{pointed tensor category} is a tensor category such that every simple object is invertible. The invertible objects form a group. Pointed tensor categories with cyclic group of invertible objects were studied in \cite{EG1}, \cite{EG2}, \cite{EG3} and later in \cite{A}.

\medbreak

Any finite pointed tensor category is equivalent to the representation category of a finite-dimensional
 quasi-Hopf algebra $A$. In the case when the group of invertible elements is a cyclic group $G$ there
 exists an action of $G$ on $\Rep(A)$ such that the equivariantization $\Rep(A)^G$ is equivalent to the
 representation category of a finite-dimensional pointed Hopf algebra $H$, see \cite{A}.
The purpose of this work is to relate module categories over $\Rep(A)$ and module categories over
 $\Rep(H)$ and whenever is possible obtain a classification of exact module categories over $\Rep(A)$
 assuming that we know the classification for $\Rep(H)$. Module categories over any quasi-Hopf algebra
 are parameterized by Morita equivariant equivalence classes of comodule algebras. We would like to establish
 a correspondence as follows:
\begin{equation*}
\left\{
\begin{array}{c}
\mbox{Morita quivalence classes}\\
\mbox{of $H$-comodule algebras } \\
\mbox{such that $G\subseteq K_0$} \\
\end{array}
\right\}
\begin{array}{c}
{}\\
\xrightarrow{\quad  \quad }\\
\xleftarrow{\quad  \quad}\\
{}
\end{array}
\left\{
\begin{array}{c}
\mbox{Morita equivalence classes}\\
\mbox{of $A$-comodule algebras} \\
\mbox{$(\kc, \Phi_\lambda)$}\\
\end{array}
\right\}.
\end{equation*}
\medbreak

The contents of the paper are the following. In Section \ref{s:modcat} we recall the notion of exact module category, the notion of tensor product of module categories over a tensor category. In Section \ref{s:equi} we recall the notion of $G$-graded tensor categories, $G$-actions of tensor categories and crossed products tensor categories. We also recall the  $G$-equivariantization construction of tensor categories
 and module categories.

Section \ref{s:quasi} is devoted to study comodule algebras over quasi-Hopf algebras and
 how they give rise to module categories. Next, in Section \ref{s:equivarin-quasi} we study the
 equivariantization of the representation category of a quasi-Hopf algebra and the
  equivariantization of comodule algebras. We describe the datum that gives rise to an action in a
 representation category of a comodule algebra, that we call a \emph{crossed system} and we
 prove that the equivariantization of module categories are modules over a certain crossed product
 comodule algebra.

In Section \ref{s:basicquasi} we recall the definition of a family of finite-dimensional basic
 quasi-Hopf algebras introduced by I. Angiono \cite{A} that are denoted by $A(H,s)$, where
 $H$ is a coradically graded Hopf algebra with cyclic group of group-like elements. A particular class of these  quasi-Hopf algebras
were introduced by S. Gelaki \cite{Ge}  and later used by Etingof and Gelaki to classify certain families of pointed tensor
categories. There is an action
 of a group $G\subseteq G(H)$ on $\Rep(A(H,s))$ such that $\Rep(A(H,s))^G\simeq \Rep(H)$ \cite{A}.
 For any left $H$-comodule algebra $K$ such that $\ku G\subseteq K_0$ we construct a left
 $A(H,s)$-comodule algebra. We prove that in the case that $\mid G(H)\mid=p^2$,
 where $p$ is a prime number, the representation category of this family of comodule algebras is big enough to contain all module
 categories over $\Rep(A(H,s))$. We apply this result to classify module categories in the case when
 $H$ is the bosonization of a quantum linear space.

\subsection*{Acknowledgments} We are very grateful to Iv\'an Angiono  for many
 fruitful conversations and for patiently answering our questions on his work \cite{A}. We also thank  the
referee for his constructive comments.

\section{Preliminaries and notation}

Hereafter $\ku$ will denote an algebraically closed field of characteristic 0. All vector spaces and algebras will be considered over $\ku$.

If $H$ is a Hopf algebra and $A$ is an $H$-comodule algebra via $\lambda:A\to H\otk A$, we
shall say that a (right) ideal $J$ is $H$-costable if
$\lambda(J)\subseteq H\otk J$. We shall say that $A$ is (right)
$H$-simple, if there is no nontrivial (right) ideal $H$-costable
in $A$.

\medbreak

If $H$ is a finite-dimensional Hopf algebra then $H_0\subseteq H_1
\subseteq \dots \subseteq H_m=H$ will denote the coradical
filtration. When $H_0\subseteq H$ is a Hopf subalgebra then the
associated graded algebra $\gr H$ is a coradically graded Hopf
algebra. If $(A, \lambda)$ is a left $H$-comodule algebra, the
coradical filtration on $H$ induces a filtration on $A$, given by
$A_n=\lambda^{-1}(H_n\otk A)$ called the \emph{Loewy filtration}.

\subsection{Finite tensor categories and tensor functors}
A \emph{ tensor category over} $\ku$ is a $\ku$-linear Abelian rigid monoidal category. A
\emph{finite tensor category} \cite{eo} is a tensor category such that  it has a finite number of isomorphism classes
 of simple objects, Hom spaces are finite-dimensional
$\ku$-vector spaces, all objects have finite length, every simple object has a projective cover and
the unit object is simple.

Hereafter all tensor categories will be considered over $\ku$ and every functor will be assumed to be
$\ku$-linear.

If $\ca$, $\Do$ are tensor categories, the collection $(F,\xi,\phi):\ca\to \Do$ is
a \emph{tensor functor} if $F :\ca\to \Do$ is a functor, $\phi:F(\uno_\ca)\to \uno_\Do$ is an isomorphism and for any $X, Y\in\ca$ the family of natural isomorphisms $\zeta_{X,Y}:F(X)\ot F(Y)\to F(X\ot Y)$ satisfies
\begin{equation}\label{tensorf1}\zeta_{X,Y\ot Z} (\id_{F(X)}\ot
\zeta_{Y,Z})a_{F(X),F(Y),F(Z)}= F(a_{X,Y,Z})\zeta_{X\ot
Y,Z}(\zeta_{X,Y}\ot\id_{F(Z)}),
\end{equation}
\begin{equation}
\label{tensorf2}l_{F(X)}=F(l_X)\zeta_{\uno,X} (\phi\ot\id_{F(X)}),
\end{equation}
\begin{equation} \label{tensorf3}r_{F(X)}
=F(r_X)\zeta_{X,\uno}(\id_{F(X)}\ot\phi),
\end{equation}

If $(F, \zeta), (G, \xi):\ca\to \Do$ are tensor functors, a \textit{natural tensor transformation} $\gamma:F\to G$ is a natural transformation such that $\gamma_{X\ot Y} \zeta_{X,Y}= \xi_{X,Y} (\gamma_X\ot \gamma_Y)$ for all $X, Y\in \ca$.

\section{Module categories}\label{s:modcat}

 A (left) \emph{module category} over a tensor
category $\ca$ is an Abelian category $\Mo$ equipped with an exact
bifunctor $\otb: \ca \times \Mo \to \Mo$, that we will sometimes refer as the \emph{action},  natural associativity
and unit isomorphisms $m_{X,Y,M}: (X\otimes Y)\otb M \to X\otimes
(Y\otb M)$, $\ell_M: \uno \otb M\to M$ subject to natural
associativity and unity axioms. See for example \cite{eo}. A module category $\Mo$ is \emph{exact}, \cite{eo},  if for any projective object
$P\in \ca$ the object $P\otb M$ is projective in $\Mo$ for all
$M\in\Mo$.
Sometimes we shall also say  that $\Mo$ is a $\ca$-module. Right module categories and bimodule categories are defined similarly.

\medbreak

If $\Mo$ is a left $\ca$-module then $\Mo^{\op}$ is the  right $\ca$-module over the opposite Abelian
 category with action $\Mo^{\op}\times \ca\to \Mo^{\op}$, $(M, X)\mapsto X^*\otb M$ and associativity isomorphisms $m^{\op}_{M,X,Y}=m_{Y^*, X^*, M}$ for all $X, Y\in \ca, M\in \Mo$.
\medbreak

If  $\ca, \ca', \mathcal{E}$ are tensor categories, $\Mo$ is a $(\ca,\mathcal{E})$-bimodule category and $\No$ is an $(\mathcal{E},\ca')$-bimodule
category, we shall denote the tensor product over $\mathcal{E}$  by $\Mo\boxtimes_\mathcal{E}\No$. This category is a $(\ca,\ca')$-bimodule category. For more details on the tensor product of module categories the reader is referred to \cite{ENO3}, \cite{Gr}.

A module functor between module categories $\Mo$ and $\Mo'$ over a
tensor category $\ca$ is a pair $(T,c)$, where $T:\Mo \to
\Mo'$ is a  functor and $c_{X,M}: T(X\otb M)\to
X\otb T(M)$ is a natural isomorphism such that for any $X, Y\in
\ca$, $M\in \Mo$:
\begin{align}\label{modfunctor1}
(\id_X\otimes c_{Y,M})c_{X,Y\otb M}T(m_{X,Y,M}) &=
m_{X,Y,T(M)}\, c_{X\otimes Y,M}
\\\label{modfunctor2}
\ell_{T(M)} \,c_{\uno ,M} &=T(\ell_{M}).
\end{align}
We shall use the notation $(T, c): \Mo \to \Mo'$. There is a composition
of module functors: if $\Mo''$ is another module category and
$(U,d): \Mo' \to \Mo''$ is another module functor then the
composition
\begin{equation}\label{modfunctor-comp}
(U\circ T, e): \Mo \to \Mo'', \qquad \text {where } e_{X,M} = d_{X,U(M)}\circ
U(c_{X,M}),
\end{equation} is
also a module functor.

\smallbreak  Let $\Mo_1$ and $\Mo_2$ be module categories over
$\ca$. We denote by $\Hom_{\ca}(\Mo_1, \Mo_2)$ the category whose
objects are module functors $(\Fc, c)$ from $\Mo_1$ to $\Mo_2$. A
morphism between  $(\Fc,c)$ and $(\Gc,d)\in\Hom_{\ca}(\Mo_1,
\Mo_2)$ is a natural transformation $\alpha: \Fc \to \Gc$ such
that for any $X\in \ca$, $M\in \Mo_1$:
\begin{gather}
\label{modfunctor3} d_{X,M}\alpha_{X\otb M} =
(\id_{X}\otb \alpha_{M})c_{X,M}.
\end{gather}

  Two module categories $\Mo_1$ and $\Mo_2$ over $\ca$
are {\em equivalent} if there exist module functors $F:\Mo_1\to
\Mo_2$ and $G:\Mo_2\to \Mo_1$ and natural isomorphisms
$\id_{\Mo_1} \to F\circ G$, $\id_{\Mo_2} \to G\circ F$ that
satisfy \eqref{modfunctor3}.

\medbreak

The  direct sum of two module categories $\Mo_1$ and $\Mo_2$ over
a tensor category $\ca$  is the $\ku$-linear category $\Mo_1\times
\Mo_2$ with coordinate-wise module structure. A module category is
{\em indecomposable} if it is not equivalent to a direct sum of
two non trivial module categories.

\medbreak

If $(F, \xi):\ca\to\ca$ is a tensor functor and $(\Mo, \otb, m)$ is a module
category over $\ca$ we shall denote by $\Mo^F$ the module category
$(\mo, \otb^F, m^F)$ with the same underlying Abelian category with action and associativity
 isomorphisms defined by
$$X\otb^F M=F(X)\otb M,\quad m_{X,Y,M}^F=m_{F(X),F(Y),M} (\xi^{-1}_{X,Y}\otb\, \id_M), $$
for all $X, Y\in \ca$, $M\in \Mo$.

\section{Equivariantization of tensor categories}\label{s:equi}

\subsection{Group actions on tensor categories}

We briefly recall the group actions on tensor categories and the equivariantization construction. For more details the reader is referred to \cite{DGNO}.

\medbreak

Let $\ca$ be a tensor category and let $\underline{\text{Aut}_\otimes(\ca)}$
 be the monoidal category of tensor auto-equivalences of $\ca$, arrows are  tensor natural
 isomorphisms and tensor product the  composition of monoidal functors. We shall denote  by
 $\text{Aut}_\otimes (\ca)$ the group of isomorphisms classes of tensor auto-equivalences of  $\ca$, with the multiplication induced
 by the composition, \emph{i.e.}  $[F][F']= [F\circ F']$.

\medbreak

For any group $G$ we shall denote by $\underline{G}$ the monoidal category where objects are elements of $G$ and tensor product is given by the product of $G$. An action of the group  $G$ over a   $\ca$, is a
monoidal functor   $*:\underline{G}\to \underline{\text{Aut}_\otimes(\ca)}$.
In another words for any $\sigma\in G$ there is a tensor functor $(F_\sigma, \zeta_\sigma):\ca\to\ca$, and for any $\sigma,\tau\in G$, there are natural tensor isomorphisms $\gamma_{\sigma,\tau}:F_\sigma\circ F_\tau\to F_{\sigma\tau}$.

\subsection{$G$-graded tensor categories}
 Let $G$ be a group and $\ca$ be a tensor category. We
shall say that  $\ca$ is  $G$-graded, if there is a decomposition
$$\ca=\oplus_{\sigma\in G} \ca_\sigma$$ of $\ca$ into a direct sum of full
Abelian subcategories, such that  for all $\sigma, \tau\in G$, the
bifunctor  $\otimes$ maps $\ca_\sigma\times \ca_\tau$ to  $\ca_{\sigma
\tau}$. Given a $G$-graded tensor category $\ca$, and a subgroup $H\subset G$, we shall denote by $\ca_H$ the tensor subcategory $\oplus_{h\in H}\ca_h$.

\subsection{$G$-equivariantization of  tensor categories} Let $G$ be a group acting on a  tensor category $\ca$. An \emph{equivariant} object in $\ca$ is a pair $(X, u)$ where $X\in \ca$ is an object together
 with isomorphisms $u_\sigma:F_\sigma(X)\to X$ satisfying
$$
u_{\sigma \tau}\circ (\gamma_{\sigma,\tau})_X=u_\sigma\circ F_\sigma(u_\tau),
$$
for all $\sigma,\tau \in G$. A
$G$-equivariant morphism $\phi: (V, u) \to (W, u')$ between $G$-equivariant objects $(V, f)$ and $(W, \sigma)$, is a morphism $\phi: V \to W$ in
$\mathcal M$ such that $\phi\circ u_\sigma = u'_\sigma\circ F_\sigma(\phi)$ for all $\sigma \in G$.

The tensor category of equivariant objects is denoted by $\ca^G$ and it is
called the \emph{equivariantization} of $\ca$. The tensor product of $\ca^G$ is defined by \begin{align*}
    (V, u)\otimes (W, u'):= (V\otimes W, \tilde{u}),
\end{align*}where $\tilde{u}_\sigma= (u_\sigma\ot u'_\sigma) \zeta_\sigma^{-1},$
 for any $\sigma\in G$. The unit object is $(1, \text{id}_1)$.

\subsection{Crossed product tensor categories and $G$-invariant module categories}\label{section:Ggraded}

Given an action $*:\underline{G}\to
\underline{\text{Aut}_\otimes(\ca)}$ of $G$ on $\ca$,  the
$G$-crossed product tensor category, denoted by $\ca\rtimes G$ is
defined as follows. As an Abelian category $\ca\rtimes G=
\bigoplus_{\sigma\in G}\ca_\sigma$, where $\ca_\sigma =\ca$ as an
Abelian category, the tensor product is
$$[X, \sigma]\otimes [Y,\tau]:= [X\otimes F_\sigma(Y),
\sigma\tau],\  \  \   X,Y\in \ca,\  \   \sigma,\tau\in G,$$ and
the unit object is $[1,e]$. See \cite{Ta} for the associativity
constraint and a proof of the pentagon identity.

If $\ca=\Rep(A)$ is the  representation category of a finite-dimensional quasi-Hopf algebra $A$ then $\ca\rtimes G$ is also a representation category of a finite-dimensional quasi-Hopf algebra $B$. This is an immediate consequence of \cite[Prop. 2.6]{eo} since each simple object $W\in \ca\rtimes G$ is isomorphic to $[V,e]\otimes[1,\sigma]$, where $\sigma \in G$ and $V\in \Rep(A)$ is simple. Let $d: K_0(\ca)\to \mathbb Z$ the Perron-Frobenius dimension, then $d([V,e]\otimes[1,\sigma])=d(V)d([1,\sigma])=d(V)\in \mathbb Z$, where $d([1,\sigma])=1$ because $[1,\sigma]$ is multiplicatively invertible.

\subsection{Equivariantization of module categories}
We shall explain analogous procedures for
equivariantization in module categories. Equivariant module categories appeared in \cite{ENO2}.
 We shall use the approach given in \cite{Ga1}.

\medbreak

Let $G$ be a group and $\ca$ be a tensor category
equipped with an
action of $G$. Let $\Mo$ be a module category over $\ca$. For any $g\in G$ we shall denote by
 $\Mo^\sigma$ the module category $\Mo^{F_\sigma}$. If $\sigma\in G$, we shall say that an endofunctor
 $T:\Mo\to\Mo$ is  $\sigma$-\emph{invariant} if it has a module structure $(T,c):\Mo\to\Mo^\sigma$.
\medbreak

If $\sigma, \tau\in G$ and $T$ is $\sigma$-invariant and $U$ is $\tau$-invariant then $T\circ U$ is $\sigma\tau$-invariant. Indeed, let us assume that the functors  $(T,c):\Mo\to\Mo^\sigma$, $(U,d):\Mo\to\Mo^\tau$ are module functors then $(T\circ U,b):\Mo\to\Mo^{\sigma\tau}$ is a module functor, where
\begin{align}\label{invariant-composition} b_{X,M}=((\gamma_{\sigma,\tau})_X\ot\id)c_{F_\tau(X),M}T(d_{X,M}),
\end{align}
for all $X\in \ca$, $M\in \Mo$.

\begin{defi}Let $F\subseteq G$ be a subgroup. \begin{itemize}

\item[1.] The monoidal category of $\sigma$-equivariant functors for some $\sigma\in F$ in $\Mo$ will be denoted by $\underline{\Aut^F_{\ca}(\Mo)}$.

\item[3.] An $F$-\emph{equivariant} module category is a module category $\Mo$ equipped with a monoidal functor $(\Phi,\mu):\underline{F}\to \underline{\Aut^F_{\ca}(\Mo)}$, such that $\Phi(\sigma)$ is  a $\sigma$-invariant functor for any $\sigma\in F$.
             \end{itemize}
\end{defi}

In another words, an $F$-equivariant module category is a module category $\Mo$ endowed with a family of module functors $(U_\sigma, c^\sigma):\Mo\to \Mo^\sigma$ for any $\sigma\in F$ and a family of natural isomorphisms $\mu_{\sigma,\tau}:(U_\sigma\circ U_\tau, b)\to (U_{\sigma\tau}, c^{\sigma \tau})$ $\sigma, \tau\in F$ such that
\begin{equation}\label{mod-equi1} {(\mu_{\sigma,\tau \nu})}_M\circ U_\sigma( \mu_{\tau,\nu})_M={(\mu_{\sigma \tau,\nu})}_M\circ {(\mu_{\sigma,\tau})}_{U_\nu(M)},
\end{equation}
\begin{equation}\label{mod-equi2} c^{\sigma\tau}_{X,M}\circ(\mu_{\sigma,\tau})_{X\otb M}= ((\gamma_{\sigma,\tau})_X\otb (\mu_{\sigma,\tau})_M)\circ c^\sigma_{F_\tau(X),U_\tau(M)}\circ
U_\sigma(c^\tau_{X,M}),
\end{equation}
for all $\sigma,\tau,\nu\in F$, $X\in \ca$, $M\in \Mo$. Equation \eqref{mod-equi1} follows from \eqref{tensorf1} and \eqref{mod-equi2} follows from \eqref{modfunctor3}.

\begin{exa} $\ca$ is a $G$-equivariant module category over itself. For any $g\in G$ set $(U_\sigma, c^\sigma)=(F_\sigma, \theta_\sigma)$ and $\mu_{\sigma,\tau}=\gamma_{\sigma,\tau}$ for all $\sigma, \tau\in G$.
\end{exa}

If $\Mo$ is an $F$-equivariant module category, an \emph{equivariant object} (see \cite[Def. 5.3]{ENO2}) is an object $M\in \Mo$ together with isomorphisms $\{v_\sigma:U_\sigma(M)\to M: \sigma\in F\}$ such that for all $\sigma, \tau\in F$
\begin{align}\label{F-equivariant-obj} v_{\sigma\tau} \circ (\mu_{\sigma,\tau})_M=v_\sigma\circ U_\sigma(v_\tau).
\end{align}
The category of $F$-equivariant objects is denoted by $\Mo^F$. A morphism between two $F$-equivariant objects $(M,v)$, $(M',v')$ is a morphism $f:M\to M'$ in $\Mo$ such that $ f\circ v_\sigma= v'_\sigma\circ U_\sigma(f)$ for all $\sigma\in F$.

\begin{lema}\label{equivariant-mod-cat} The category $\Mo^F$ is a $\ca^G$-module category.
\end{lema}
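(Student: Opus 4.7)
The plan is to build the $\ca^G$-module structure on $\Mo^F$ via the natural restriction functor $\ca^G \to \ca^F$ (an object $(X,u)\in\ca^G$ yields an $F$-equivariant object by restricting $u$ to $F$), so it suffices to exhibit a $\ca^F$-action on $\Mo^F$. For $(X,u)\in \ca^F$ and $(M,v)\in\Mo^F$, define
\begin{equation*}
(X,u)\,\otb^F\,(M,v) := (X\otb M,\ w),\qquad w_\sigma := (u_\sigma\otb v_\sigma)\circ c^\sigma_{X,M},
\end{equation*}
for $\sigma\in F$, where $c^\sigma$ is the module structure of $U_\sigma:\Mo\to\Mo^\sigma$. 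On morphisms, take $f\otb^F g := f\otb g$; that this is an $F$-equivariant morphism is a direct consequence of the naturality of $c^\sigma$. The associativity and unit isomorphisms are inherited from $\Mo$, namely $m^F_{(X,u),(Y,u'),(M,v)} := m_{X,Y,M}$ and $\ell^F_{(M,v)} := \ell_M$.

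The main step is to verify that $w$ satisfies the equivariant-object condition \eqref{F-equivariant-obj}, i.e.
\begin{equation*}
w_{\sigma\tau}\circ (\mu_{\sigma,\tau})_{X\otb M} \;=\; w_\sigma\circ U_\sigma(w_\tau),\qquad \sigma,\tau\in F.
\end{equation*}
Expanding the left-hand side with the definition of $w_{\sigma\tau}$ and then applying \eqref{mod-equi2} replaces $c^{\sigma\tau}_{X,M}\circ(\mu_{\sigma,\tau})_{X\otb M}$ by $((\gamma_{\sigma,\tau})_X\otb(\mu_{\sigma,\tau})_M)\circ c^\sigma_{F_\tau(X),U_\tau(M)}\circ U_\sigma(c^\tau_{X,M})$. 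The equivariance conditions for $u$ and $v$, namely $u_{\sigma\tau}\circ(\gamma_{\sigma,\tau})_X=u_\sigma\circ F_\sigma(u_\tau)$ and $v_{\sigma\tau}\circ(\mu_{\sigma,\tau})_M=v_\sigma\circ U_\sigma(v_\tau)$, then let one factor the tensor of identities $(u_{\sigma\tau}\otb v_{\sigma\tau})$ through $F_\sigma(u_\tau)\otb U_\sigma(v_\tau)$; finally the naturality of $c^\sigma$ with respect to $u_\tau$ and $v_\tau$ collapses the composition into $w_\sigma\circ U_\sigma(w_\tau)$. This diagram chase is the only genuinely non-trivial verification.

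The remaining checks are straightforward. That the associator $m_{X,Y,M}$ is $F$-equivariant, i.e.\ intertwines the $w$ of $((X,u)\otimes(Y,u'))\otb^F(M,v)$ with that of $(X,u)\otb^F((Y,u')\otb^F(M,v))$, is exactly the statement that $(U_\sigma,c^\sigma)$ is a module functor, namely axiom \eqref{modfunctor1}, combined with the fact that the tensor structure on $(X,u)\otimes(Y,u')$ in $\ca^F$ uses $\zeta_\sigma^{-1}$. Similarly, the unit compatibility of $\ell_M$ follows from \eqref{modfunctor2} together with the equivariant-object structure on the unit $\uno$. The pentagon and triangle axioms in $\Mo^F$ then reduce to those in $\Mo$ since morphisms in $\Mo^F$ are morphisms in $\Mo$, and composition and tensor action on Hom-sets are unchanged.

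The only real obstacle is the first diagram chase above; everything else is formal bookkeeping. Since the entire construction uses only the restrictions $u|_F$ and $c^\sigma$ for $\sigma\in F$, composing with the monoidal functor $\ca^G\to \ca^F$ yields the desired $\ca^G$-module structure on $\Mo^F$.
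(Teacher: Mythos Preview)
Your proof is correct and follows the same approach as the paper: define the action by $(X,u)\otb(M,v)=(X\otb M,\widetilde v)$ with $\widetilde v_\sigma=(u_\sigma\otb v_\sigma)\circ c^\sigma_{X,M}$, invoke \eqref{mod-equi2} (together with the equivariance of $u,v$ and naturality of $c^\sigma$) to check \eqref{F-equivariant-obj}, and take the associativity and unit constraints from $\Mo$. The paper states this in one line; you have simply supplied the diagram chase and the additional verifications (equivariance of $m$ and $\ell$, passage through the restriction $\ca^G\to\ca^F$) that the paper leaves implicit.
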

\pf If $(X, u)\in \ca^G$ and $(M,v)\in\Mo^F$ the action is defined by
$$ (X, u)\otb (M,v)=(X\otb M, \widetilde{v}),$$
where $\widetilde{v}_\sigma=(u_\sigma\ot v_\sigma)c^\sigma_{X,M}$ for all $\sigma\in F$. The object $(X\otb M, \widetilde{v})$ is equivariant due to equation \eqref{mod-equi2}. The associativity isomorphisms are the same as in $\Mo$.
\epf

The notion of $F$-equivariant module category is equivalent to the notion of $\ca\rtimes F$-module cateory. If $\Mo$ is an $F$-equivariant $\ca$-module category for some subgroup $F$ of $G$, then $\Mo$ is a $\ca\rtimes F$-module with action $\otb: \ca\rtimes F\times \Mo\to \Mo$ given by
$[X,g]\otb M=X\otb U_g(M)$, for all $X\in \ca$, $g\in F$ and $M\in \Mo$. The associativity isomorphisms are given by
$$ m_{[X,g], [Y,h], M}=\big(\id_{X}\ot (c^g_{Y, U_h(M)})^{-1}(\id_{F_g(Y)}\ot \mu_{g,h}^{-1}(M))\big)\, m_{X, F_g(Y), U_{gh}(M)},$$
for all $X, Y\in \ca$, $g,h\in F$ and $M\in \Mo$.

In the next statement we collect several well-known results that are, by now, part of the folklore of the subject.

\begin{prop}\label{mod-cat-equi} Let $G$ be a finite group acting over a finite tensor category $\ca$. If $F\subset G$ is a subgroup, and $\Mo$ is an $F$-equivariant $\ca$-module category, then:

\begin{itemize}

\item[1.] If $\Mo$ is an exact ( indecomposable) $\ca$-module category then $\Mo$ is an exact (respectively indecomposable) $\ca\rtimes F$-module category.

               \item[2.] $\Mo^F$ is an exact  module category
               if and only if $\Mo$ is an exact  module category.

   \item[3.]      There is an equivalence of $\ca^G$-module categories \begin{equation}\label{equivariant-equivalences} \Mo^F\simeq \Fun_{\ca\rtimes F}(\ca, \Mo) \simeq \ca^{\op} \boxtimes_{\ca\rtimes F} \Mo\simeq \big(\ca \rtimes G \boxtimes_{\ca\rtimes F}  \Mo\big)^G.
       \end{equation}

\item[4.] If $\No$ is an indecomposable (exact) module category
over $\ca^G$ there exists a subgroup $F$ of $G$ and an $F$-equivariant indecomposable (exact)
module category $\Mo$ over $\ca$   such that $\No\simeq \Mo^F.$

\item[5.] If $\Mo_1$, $\Mo_2$ are $G$-equivariant $\ca$-module categories such that $\Mo^G_1\simeq \Mo^G_2$ as $\ca^G$-module categories then $\Mo_1\simeq \Mo_2$ as $\ca$-module categories.
             \end{itemize}
\end{prop}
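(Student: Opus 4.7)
The strategy is to prove part (3) first and then deduce parts (2) and (5) from it; parts (1) and (4) require independent arguments, and (4) is the main obstacle.

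For part (3), I build the three equivalences explicitly. An $F$-equivariant object $(M,v)\in\Mo^F$ is sent to the module functor $\Psi_{(M,v)}:\ca\to \Mo$, $X\mapsto X\otb M$, promoted to a $\ca\rtimes F$-module functor via the isomorphisms $X\otb U_\sigma(M)\xrightarrow{\id\otb v_\sigma} X\otb M$ (the cocycle condition \eqref{F-equivariant-obj} is exactly what is needed for this to respect the $\mu_{\sigma,\tau}$); the inverse sends a $\ca\rtimes F$-module functor to its value at $\uno$ together with the images of the generators $[\uno,\sigma]$. This gives the first equivalence. The second equivalence $\Fun_{\ca\rtimes F}(\ca,\Mo)\simeq \ca^{\op}\boxtimes_{\ca\rtimes F}\Mo$ is the defining universal property of the relative tensor product (see \cite{ENO3}, \cite{Gr}) together with the fact that $\ca$ is a regular $(\ca,\ca\rtimes F)$-bimodule. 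The third equivalence uses the decomposition $\ca\rtimes G\boxtimes_{\ca\rtimes F}\Mo\simeq \bigoplus_{\sigma F\in G/F}\Mo^\sigma$ as $\ca$-modules carrying a canonical $G$-action permuting the cosets, and a direct check that its $G$-fixed subcategory is $\Mo^F$.

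Parts (1), (2) and (5) then follow routinely. For (1), the $\ca\rtimes F$-action $[X,\sigma]\otb M=X\otb U_\sigma(M)$ inherits indecomposability because every $\ca\rtimes F$-submodule is a fortiori a $\ca$-submodule; and it inherits exactness because each $U_\sigma$ is an autoequivalence of the underlying abelian category and therefore preserves projectives, so for any projective $[P,\sigma]$ the object $P\otb U_\sigma(M)$ is projective by exactness of $\Mo$ over $\ca$. For (2), the equivalence $\Mo^F\simeq \ca^{\op}\boxtimes_{\ca\rtimes F}\Mo$ from (3), combined with exactness of the relative tensor product of exact module categories over finite tensor categories (\cite{ENO3}), gives the forward direction; the reverse direction uses the inverse of this equivalence, exhibited via induction of $F$-equivariant modules. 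Part (5) follows by applying the functor $\ca\rtimes G\boxtimes_{\ca^G}(-)$ to an assumed equivalence $\Mo_1^G\simeq \Mo_2^G$ of $\ca^G$-modules and invoking (3) with $F=G$ to recover $\Mo_1\simeq\Mo_2$.

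Part (4) is the main obstacle. Given an indecomposable $\ca^G$-module $\No$, form the induced $\ca$-module $\widetilde{\No}=\ca\boxtimes_{\ca^G}\No$. Because $\ca$ carries a natural $G$-action commuting with the $\ca^G$-structure coming from the forgetful functor $\ca^G\to\ca$, the module $\widetilde{\No}$ inherits a canonical $G$-equivariant structure. Decompose $\widetilde{\No}=\bigoplus_i\Mo_i$ into indecomposable $\ca$-modules; $G$ permutes the $\Mo_i$, and the orbit must be the whole set (otherwise the equivariant submodule generated by one orbit would contradict indecomposability of $\No$). Fix one representative $\Mo$ of this orbit and set $F=\St_G(\Mo)$, so that the restricted $F$-action endows $\Mo$ with an $F$-equivariant structure. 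The conclusion $\No\simeq\Mo^F$ follows from the Galois-type identification $(\ca\boxtimes_{\ca^G}\No)^G\simeq \No$, which under the $G$-orbit decomposition picks out exactly $\Mo^F$. The technically hardest point is this descent statement: it is the module-categorical analog of classical equivariant descent and expresses the adjunction/inverse equivalence between $\ca\boxtimes_{\ca^G}(-)$ and $(-)^G$, set up in the equivariantization literature (\cite{DGNO}, \cite{Ga1}) and which is exactly what makes the correspondence between $\ca^G$-modules and $G$-equivariant $\ca$-modules work.
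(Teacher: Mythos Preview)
Your proposal is broadly correct and the individual steps work, but the organization and the key external inputs differ from the paper's.

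The paper's unifying device is Tambara's theorem \cite[Thm.~4.1]{Ta}, which gives a 2-equivalence between $\ca\rtimes F$-module categories and $\ca^F$-module categories via $(-)^F$. The paper invokes this repeatedly: for (2) it reduces immediately to ``$\Mo$ exact over $\ca\rtimes F$ $\Leftrightarrow$ $\Mo$ exact over $\ca$'', which is part (1) plus its trivial converse; for (5) it says $\Mo_1^G\simeq\Mo_2^G$ forces $\Mo_1\simeq\Mo_2$ as $\ca\rtimes G$-modules by Tambara, hence as $\ca$-modules by restriction; and for the last equivalence in (3) it computes $\big(\ca\rtimes G\boxtimes_{\ca\rtimes F}\Mo\big)^G$ not by your coset decomposition but abstractly, via $\ca^{\op}\boxtimes_{\ca\rtimes G}\big(\ca\rtimes G\boxtimes_{\ca\rtimes F}\Mo\big)\simeq \ca^{\op}\boxtimes_{\ca\rtimes F}\Mo$, again using Tambara for the first step. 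You instead try to avoid citing Tambara and work directly with the relative tensor product, but you end up needing exactly the same ingredient: your ``Galois-type identification'' $(\ca\boxtimes_{\ca^G}\No)^G\simeq\No$ in (4) and your inverse functor $\ca\rtimes G\boxtimes_{\ca^G}(-)$ in (5) are both manifestations of Tambara's 2-equivalence, so nothing is really saved. Your route for (2) through exactness of the relative Deligne product is heavier than the paper's one-line reduction to (1).

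For (4) the paper simply appeals to \cite[Prop.~3.9]{eo} (indecomposable exact $\Rightarrow$ simple) and the main result of \cite{Ga1}. Your argument is a sketch of precisely the Clifford-theory proof carried out in \cite{Ga1}: induce, decompose into indecomposables, take a stabilizer. One point you should be more careful about: when you say ``the restricted $F$-action endows $\Mo$ with an $F$-equivariant structure'', this is where the real work in \cite{Ga1} lies---one must check that the stabilizer action on a chosen indecomposable summand is genuinely coherent (not just up to an obstruction), which uses that the $G$-action on $\widetilde{\No}$ was honest to begin with. Your outline is correct but this is the step that deserves the citation.
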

\pf

1. Let $P\in \ca\rtimes G$ be a projective object. Thus, there exists a family of projective objects  $P_\sigma \in \ca$ such that $P = \oplus_{\sigma \in G} [P_\sigma,\sigma]$. Let $M\in \Mo$, then $P\overline{\otimes}M = \bigoplus_{\sigma \in G} P_\sigma \overline{\otimes} U_\sigma (M)$, and since $\Mo$ is an exact $\ca$-module category $P_\sigma \overline{\otimes} U_\sigma (M)$ is projective for all $\sigma$, thus $P\overline{\otimes}M$ is projective.

\smallbreak

2. Under the correspondence described in \cite[Thm. 4.1]{Ta} is enough to show that a $\ca\rtimes F$-module category $\Mo$ is exact if and only if $\Mo$ is an exact $\ca$-module category. The proof follows from part (1) of this proposition.
\smallbreak

3. An object $(F,c)\in  \Fun_{\ca\rtimes F}(\ca, \Mo)$ is determined uniquely  by an object $M\in \Mo$ such that $F(X)=X\otb M$ together with an isomorphism $v_\sigma=c_{[\uno, \sigma],\uno}:U_\sigma(M)\to M$. This correspondence establish an equivalence $\Mo^F\simeq \Fun_{\ca\rtimes F}(\ca, \Mo)$. The equivalence $\Fun_{\ca\rtimes F}(\ca, \Mo)\simeq \ca^{\op} \boxtimes_{\ca\rtimes F} \Mo$ follows from \cite[Thm. 3.20]{Gr}.

Since $\ca \rtimes G \boxtimes_{\ca\rtimes F}  \Mo$ is a $\ca \rtimes G$-module then it is a $G$-equivariant $\ca$-module category, thus
$$\big(\ca \rtimes G \boxtimes_{\ca\rtimes F}  \Mo\big)^G\simeq \ca^{\op} \boxtimes_{\ca\rtimes G} \big( \ca \rtimes G \boxtimes_{\ca\rtimes F}  \Mo\big) \simeq \ca^{\op} \boxtimes_{\ca\rtimes F}  \Mo\simeq \Mo^F. $$

The first equivalence is \cite[Thm 4.1]{Ta}.

 \smallbreak

4. By  \cite[Proposition 3.9]{eo} every indecomposable exact tensor category over a finite tensor category is a simple module category in the sense of \cite{Ga1}, so  the result follows by the main result of  \cite{Ga1}, and the item (1) of this proposition.
\smallbreak

5. Since $\Mo_1$, $\Mo_2$ are $G$-equivariant then they are $\ca\rtimes G$-module categories. It follows from \cite[Thm. 4.1]{Ta} that this are equivalent $\ca\rtimes G$-module categories. This equivalence induces an equivalence of $\ca$-module categories (see \cite[Ex. 2.5]{Ta}).\epf

It follows from Proposition \ref{mod-cat-equi} (4) that the  equivariantization construction of module categories
 by a fixed subgroup is injective. Moreover, if the equivariantization of a module category by two subgroups gives
 the same result then the groups must be conjugate.
 We shall give the precise statement in the following. First we need  a definition and a result from the paper \cite{Ga2}.

\begin{defi}\cite[Def. 4.3]{Ga2}
Let $\ca$ be a $G$-graded tensor category. If $(\Mo,\otimes)$ is a
$\ca_e$-module category, then a $\ca$-extension of $\Mo$ is a
$\ca$-module category $(\Mo,\odot)$ such that $(\Mo,\otimes)$ is
obtained by restriction to $\ca_e$.
\end{defi}

\begin{prop}\label{prop equivalent inducidas}\cite[Prop. 4.6]{Ga2}
Let $\ca$ be a $G$-graded finite tensor category and let $F,F'\subset G$ be subgroups and $(\No,\odot)$, $(\No',\odot')$
be a $\ca_F$-extension and a $\ca_{F'}$-extension of the
indecomposable $\ca_e$-module categories $\No$ and $\No'$,
respectively. Then $\ca\boxtimes_{\ca_{F'}}\No' \cong
\ca\boxtimes_{\ca_F}\No$ as $\ca$-modules if and only if there exists $\sigma \in G$
such that $F=\sigma F'\sigma^{-1}$ and $\ca_{\sigma
F'}\boxtimes_{\ca_{ F'}}\No'\cong \No$ as $\ca_e$-module categories.
\end{prop}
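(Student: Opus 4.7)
The plan is to exploit the $G$-grading of $\ca$ to decompose the induced $\ca$-modules along the coset structure, and then compare the two decompositions.

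First I would record a structural observation. Since $\ca=\bigoplus_{[g]\in G/F}\ca_{gF}$ and $\ca_{gF}\simeq \ca_g\boxtimes_{\ca_e}\ca_F$ as $(\ca_e,\ca_F)$-bimodule categories, one has
\[
\ca \boxtimes_{\ca_F}\No \;\simeq\; \bigoplus_{[g]\in G/F} \ca_g\boxtimes_{\ca_e}\No
\]
as $\ca_e$-modules. Each summand is indecomposable over $\ca_e$ because $\ca_g$ is an invertible $\ca_e$-bimodule category and $\No$ is indecomposable; moreover the action of $\ca_h$ carries the $[g]$-summand to the $[hg]$-summand, so the $\ca$-action permutes these summands transitively via left multiplication on $G/F$. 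The analogous decomposition of course holds for $\ca\boxtimes_{\ca_{F'}}\No'$.

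For the sufficiency direction, I would assume $F=\sigma F'\sigma^{-1}$ and $\ca_{\sigma F'}\boxtimes_{\ca_{F'}}\No'\simeq\No$ as $\ca_e$-modules. The relation $F\sigma=\sigma F'$ makes $\ca_{\sigma F'}$ a $(\ca_F,\ca_{F'})$-bimodule category, so $\ca_{\sigma F'}\boxtimes_{\ca_{F'}}\No'$ carries a canonical $\ca_F$-module structure. Granting that this structure is compatible with the hypothesised extension on $\No$ (see obstacle below), associativity of the relative tensor product gives
\[
\ca\boxtimes_{\ca_F}\No \;\simeq\; \ca\boxtimes_{\ca_F}\ca_{\sigma F'}\boxtimes_{\ca_{F'}}\No' \;\simeq\; \ca\boxtimes_{\ca_{F'}}\No',
\]
the last step using $\ca\boxtimes_{\ca_F}\ca_{\sigma F'}\simeq\ca$ as $(\ca,\ca_{F'})$-bimodules, which is immediate from the summand decomposition above and the coset bijection $[g]_F\mapsto [g\sigma]_{F'}$, $G/F\to G/F'$.

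For the necessity direction, given a $\ca$-module equivalence $\phi\colon\ca\boxtimes_{\ca_F}\No\to\ca\boxtimes_{\ca_{F'}}\No'$, restriction to $\ca_e$ shows that $\phi$ permutes the indecomposable summands identified above. I would pick $\sigma\in G$ such that $\phi$ sends the $[e]_F$-summand (which is $\No$) onto the $[\sigma]_{F'}$-summand (which is $\ca_\sigma\boxtimes_{\ca_e}\No'\simeq\ca_{\sigma F'}\boxtimes_{\ca_{F'}}\No'$). This yields directly $\No\simeq\ca_{\sigma F'}\boxtimes_{\ca_{F'}}\No'$ as $\ca_e$-modules. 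Compatibility of $\phi$ with the full $\ca$-action then forces the induced map $[g]_F\mapsto[g\sigma]_{F'}$ to be simultaneously well-defined and injective, giving $F\subseteq\sigma F'\sigma^{-1}$ and $\sigma F'\sigma^{-1}\subseteq F$, hence equality.

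The main obstacle is in the sufficient direction: the hypothesis only supplies an $\ca_e$-module equivalence between the two $\ca_F$-extensions of $\No$, whereas the associativity argument above requires their agreement as $\ca_F$-module categories. One must argue that the induced $\ca$-module $\ca\boxtimes_{\ca_F}(\No,\odot)$ depends on $(\No,\odot)$ only through the underlying $\ca_e$-structure and the bimodule structure inherited from the grading, so that any two $\ca_F$-extensions of the same indecomposable $\ca_e$-module produce equivalent $\ca$-modules after inducing up. This step is precisely where indecomposability of $\No$ and the rigidity of the $G$-graded bimodule structure are essential.
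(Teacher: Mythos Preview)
The paper does not prove this proposition; it is quoted verbatim from \cite[Prop.~4.6]{Ga2}, and in the present paper only the ``only if'' direction is invoked (to deduce that $F$ and $F'$ are conjugate). Your argument for that direction---decompose the induced module along cosets into indecomposable $\ca_e$-summands, use that a $\ca$-equivalence permutes the summands compatibly with the left $G$-action on cosets, and read off both the conjugacy of $F,F'$ and the $\ca_e$-equivalence $\No\simeq\ca_{\sigma F'}\boxtimes_{\ca_{F'}}\No'$---is the standard Clifford-theoretic argument and is correct.

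The problem lies in your treatment of the ``if'' direction. The obstacle you flag is real, but the resolution you propose is false: it is \emph{not} true that two $\ca_F$-extensions of the same indecomposable $\ca_e$-module induce equivalent $\ca$-modules. Take $G=F=F'=(\Z/2)^2$, $\ca=\Vect_G$, $\sigma=e$. The $\ca_F$-extensions of the $\ca_e$-module $\Vect_\ku$ are the rank-one $\Vect_G$-module categories $\Mo(G,\psi)$, parametrized by $\psi\in H^2(G,\ku^\times)\cong\Z/2$. Choosing $\No=\Mo(G,\psi)$ with $\psi$ nontrivial and $\No'=\Mo(G,1)$, the hypothesis of the ``if'' direction is satisfied (both restrict to $\Vect_\ku$ over $\ca_e$, and $\ca_{F'}\boxtimes_{\ca_{F'}}\No'=\No'\simeq\No$ as $\ca_e$-modules), yet the induced $\ca$-modules are $\No$ and $\No'$ themselves, which are inequivalent. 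So as literally stated (with the equivalence only as $\ca_e$-modules) the sufficiency cannot be salvaged by the argument you sketch; one needs the equivalence $\ca_{\sigma F'}\boxtimes_{\ca_{F'}}\No'\simeq\No$ to hold as $\ca_F$-module categories, in which case your associativity chain goes through directly. You should consult \cite{Ga2} for the precise formulation there.
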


\begin{teo} Let $G$ be a finite group action on a finite tensor category  $\ca$ and let $F,F'\subset G$ be subgroups.
 Let $\No$ and $\No'$ be an $F$-equivariant and an $F'$-equivariant
module categories respectively, such that $\No$ and $\No'$ are
indecomposable as $\ca$-module categories and $\No^F\cong \No'^{F'}$ as
$\ca^G$-module categories. Then $F$ and $F'$ are conjugate subgroups
in $G$.
\end{teo}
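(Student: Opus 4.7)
The plan is to apply Proposition \ref{prop equivalent inducidas} to the $G$-graded finite tensor category $\ca\rtimes G$, whose trivial component is $\ca$ and whose $\sigma$-graded component is $\ca$ for each $\sigma\in G$; in particular, for any subgroup $K\subseteq G$ the sub-tensor category $(\ca\rtimes G)_K$ is $\ca\rtimes K$.

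First I would use the dictionary recorded in Section \ref{section:Ggraded} between $F$-equivariant $\ca$-module structures and $\ca\rtimes F$-module structures to view $\No$ and $\No'$ as $\ca\rtimes F$- and $\ca\rtimes F'$-module categories extending the original $\ca$-actions. Since by hypothesis $\No$ and $\No'$ are indecomposable as $\ca$-module categories, this exhibits them as $(\ca\rtimes G)_F$- and $(\ca\rtimes G)_{F'}$-extensions of indecomposable $(\ca\rtimes G)_e$-module categories, which is exactly the input required by Proposition \ref{prop equivalent inducidas}.

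Next I would induce up to $\ca\rtimes G$. By Proposition \ref{mod-cat-equi}(3) there are equivalences of $\ca^G$-module categories
\begin{equation*}
\No^F\simeq \bigl(\ca\rtimes G\boxtimes_{\ca\rtimes F}\No\bigr)^G,\qquad \No'^{F'}\simeq \bigl(\ca\rtimes G\boxtimes_{\ca\rtimes F'}\No'\bigr)^G.
\end{equation*}
The two categories being equivariantized are $\ca\rtimes G$-modules, hence $G$-equivariant $\ca$-modules, and the hypothesis $\No^F\simeq\No'^{F'}$ supplies an equivalence of their $G$-equivariantizations as $\ca^G$-modules. Proposition \ref{mod-cat-equi}(5) then yields
\begin{equation*}
\ca\rtimes G\boxtimes_{\ca\rtimes F}\No\;\simeq\; \ca\rtimes G\boxtimes_{\ca\rtimes F'}\No'
\end{equation*}
as $\ca$-module categories; in fact, as the proof of that item sketched in the excerpt shows, the equivalence is obtained first at the level of $\ca\rtimes G$-modules, which is what is needed here.

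Finally, Proposition \ref{prop equivalent inducidas} applied to the $G$-graded tensor category $\ca\rtimes G$ and the two induced modules yields an element $\sigma\in G$ with $F=\sigma F'\sigma^{-1}$, which is the desired conclusion. I expect the main obstacle to be the middle step, namely translating an equivalence of $\ca^G$-equivariantizations into an equivalence of the underlying $\ca\rtimes G$-modules induced from $\No$ and $\No'$; once Proposition \ref{mod-cat-equi}(5) is used in this strengthened form, the argument reduces to a purely formal invocation of the graded-extension classification of Proposition \ref{prop equivalent inducidas}.
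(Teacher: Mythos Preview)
Your proposal is correct and follows essentially the same route as the paper's own proof: both use Proposition~\ref{mod-cat-equi}(3) to rewrite $\No^F$ and $\No'^{F'}$ as $G$-equivariantizations of the induced $\ca\rtimes G$-modules, then invoke Proposition~\ref{mod-cat-equi}(5) (in its strengthened form giving an equivalence of $\ca\rtimes G$-modules, which you correctly flag) before applying Proposition~\ref{prop equivalent inducidas}. You have also correctly identified the only delicate point, namely that one needs the $\ca\rtimes G$-module equivalence rather than merely the $\ca$-module equivalence stated in part~(5).
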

\begin{proof} It follows from Proposition \ref{mod-cat-equi} (3) that there is an equivalence of $\ca^G$-modules
$$ \big(\ca \rtimes G \boxtimes_{\ca\rtimes F}  \No\big)^G \simeq \big(\ca \rtimes G \boxtimes_{\ca\rtimes F'}  \No'\big)^G.$$
Hence by Proposition \ref{mod-cat-equi} (5) there is an equivalence of  $\ca \rtimes G$-modules $\ca \rtimes G \boxtimes_{\ca\rtimes F}  \No\simeq \ca \rtimes G \boxtimes_{\ca\rtimes F'}  \No'$, thus the result follows from Proposition \ref{prop equivalent inducidas}.
\end{proof}

\section{Quasi-Hopf algebras}\label{s:quasi}

A quasi-bialgebra \cite{D} is a four-tuple $(A, \Delta , \va , \Phi )$ where $A$ is
an associative algebra with unit,
$\Phi\in (A\ot A\ot A)^{\times}$ is called the \emph{associator}, and
$\Delta :\ A\ra A\ot A$, $\va : A\ra k$ are algebra
homomorphisms satisfying the identities
\begin{eqnarray}
&&\Phi (\Delta \ot \id)(\Delta (h))=(\id \ot \Delta )(\Delta (h))\Phi
,\label{q1}\\
&&(\id \ot \va )(\Delta (h))=h\ot 1,
\mbox{${\;\;\;}$}
(\va \ot \id)(\Delta (h))=1\ot h,\label{q2}
\end{eqnarray}
for all $h\in A$. The associator
$\Phi$ has to be a $3$-cocycle, in the sense that
\begin{eqnarray}
&&(1\ot \Phi)(\id\ot \Delta \ot \id)
(\Phi)(\Phi \ot 1)=
(\id\ot \id \ot \Delta )(\Phi )
(\Delta \ot \id \ot
\id)(\Phi ),\label{q3}\\
&&(\id \ot \va \ot \id)(\Phi )=1\ot 1\ot 1.\label{q4}
\end{eqnarray}

$A$ is called a quasi-Hopf
algebra if, moreover, there exists an
anti-morphism $S$ of the algebra
$A$ and elements $\a , \b \in
A$ such that, for all $h\in A$, we
have:
\begin{eqnarray}
&&S(h\_1)\a h\_2=\va(h)\a
\mbox{${\;\;\;}$ and ${\;\;\;}$}
h\_1\b S(h\_2)=\va (h)\b,\label{q5}\\
&&\Phi^1\b S(\Phi^2)\a \Phi^3=1
\mbox{${\;\;\;}$ and${\;\;\;}$}
S(\Phi^{-1})\a \Phi^{-2}\b S(\Phi^{-3})=1.\label{q6}
\end{eqnarray}
Here we use the notation $\Phi=\Phi^1\ot \Phi^2\ot \Phi^3$, $\Phi^{-1}= \Phi^{-1}\ot \Phi^{-3}\ot \Phi^{-3}$. If $A$ is a quasi-Hopf algebra, we shall denote by $\Rep(A)$ the tensor category of finite-dimensional representations of $A$.

\medbreak

An invertible element
$J\in A\ot A$ is called a  {\sl
twist} if $(\va \ot \id)(J)=1=(\id\ot \va)(J)$. If $A$ is a quasi-Hopf algebra and $J=J^1\ot J^2\in A\ot A$ is a twist with inverse $J^{-1}=J^{-1}\ot J^{-2}$, then we can
define a  quasi-Hopf algebra on the same algebra $A$
keeping the
 counit and antipode and replacing the
comultiplication, associator and the elements $\alpha$ and $\beta$ by
\begin{eqnarray}
&&\Delta_J(h)=J\Delta (h)J^{-1},\label{g1}\\
&&\Phi_J=(1\ot J)(\id \ot \Delta )(J) \Phi (\Delta \ot \id)
(J^{-1})(J^{-1}\ot 1),\label{g2}\\
&&\a_J=S(J^{-1})\a J^{-2},%
\mbox{${\;\;\;}$}%
\b_J=J^1\b S(J^2).\label{g3}
\end{eqnarray}
We shall denote this new quasi-Hopf algebra by $(A_J, \Phi_J)$. If $\Phi=1$ then, in this case, we shall denote $\Phi_J=dJ$.

\subsection{Comodule algebras over  quasi-Hopf algebras}

Let $(A, \Phi, \alpha,\beta,1)$ be a finite dimensional quasi-Hopf algebra.
\begin{defi} A left $A$-comodule algebra  is a family $(\kc, \lambda, \Phi_\lambda)$ such that $\kc$ is an algebra, $\lambda:\kc\to A\ot \kc$ is an algebra map, $\Phi_\lambda\in A\ot A\ot \kc$ is an invertible element such that
 \begin{align}\label{comod-alg1} &(1\ot \Phi_\lambda)(\id\otimes \Delta\ot\id)(\Phi_\lambda)(\Phi\ot 1)=(\id\ot\id\ot \lambda)(\Phi_\lambda)(\Delta\ot\id\ot \id) (\Phi_\lambda),
  \end{align}
 \begin{align}\label{comod-alg2}
      (\id\ot\epsilon\ot \id)(\Phi_\lambda)=1,
  \end{align}
  \begin{align}\label{comod-alg3}
       \Phi_\lambda(\Delta\ot\id)\lambda(x)=\big((\id\otimes \lambda)\lambda(x)\big)\Phi_\lambda, \quad x\in \kc
 \end{align}

We shall say that a comodule algebra $(\kc, \lambda, \Phi_\lambda)$ is right $A$-simple if it has no non-trivial right ideals $J\subseteq \kc$ such that $J$ is costable, that is $\lambda(J)\subseteq A\ot\kc$.
\end{defi}

\begin{rmk} The notion of comodule algebra for quasi-Hopf algebras does not coincide with the notion of comodule algebra for (usual) Hopf algebras. For quasi-Hopf algebras the coaction may not be coassociative.

\end{rmk}

If $(\kc, \lambda, \Phi_\lambda)$ is a left $A$-comodule algebra, the category ${}^{A}_{\kc}\Mo_A$ consists of $(\kc, A)$-bimodules  $M$ equipped with a $(\kc, A)$-bimodule map $\delta:M\to A\ot M$ such that for all $m\in M$
 \begin{align}\label{quasi-bimodule1}   \Phi_\lambda (\Delta\ot\id)\delta(m)&= (\id\ot\delta) \delta(m)\Phi,\\
 \label{quasi-bimodule2} (\varepsilon\ot\id)\delta&=\id.
\end{align}

The following result will be useful to present examples of exact module categories, it is a consequence of some freeness results on comodule algebras over quasi-Hopf algebras proven by H. Henker.

\begin{lema}\label{skryabin-quasi} Let $(\kc, \lambda, \Phi_\lambda)$ be a right $A$-simple left $A$-comodule algebra. If $M\in {}_\kc\Mo$ then $A\ot M\in {}_\kc\Mo$ is projective.
\end{lema}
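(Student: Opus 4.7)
The plan is to exhibit $A \otimes M$ as an object of the category ${}^{A}_{\kc}\Mo_{A}$ and then invoke the freeness theorem of H.~Henker for comodule algebras over quasi-Hopf algebras. In analogy with the classical Skryabin theorem for Hopf algebras, this result asserts that, since $\kc$ is right $A$-simple, every object of ${}^{A}_{\kc}\Mo_{A}$ is free---and in particular projective---as a left $\kc$-module. Granting this, the lemma reduces to constructing an appropriate quasi-Hopf module structure on $N := A \otimes M$.

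To set up this structure, I would take the right $A$-action on $N$ to be multiplication on the $A$-tensorand, $(a \otimes m)\cdot b := ab \otimes m$, and the left $\kc$-action to be the diagonal one induced by the coaction: writing $\lambda(x) = x_{(-1)} \otimes x_{(0)} \in A \otimes \kc$, set $x\cdot (a \otimes m) := x_{(-1)} a \otimes x_{(0)} m$. Associativity of the $\kc$-action is automatic from $\lambda$ being an algebra morphism (so that no coassociativity of $\lambda$ is needed), and compatibility between the left $\kc$- and right $A$-actions is immediate. The coaction $\delta : N \to A \otimes N$ is the natural candidate $\delta(a \otimes m) := a_{(1)} \otimes (a_{(2)} \otimes m)$ arising from the quasi-coproduct of $A$, possibly modified by a suitable factor of $\Phi_\lambda$ to compensate for the non-triviality of the associator.

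The counit axiom \eqref{quasi-bimodule2} for $\delta$ is immediate from $(\varepsilon \otimes \id)\Delta = \id$. The essential step, and the main obstacle, is the verification of the associator axiom \eqref{quasi-bimodule1} for $N$: after expanding both sides on the third tensor factor, where $\Phi_\lambda$ acts through the left $\kc$-action on $N$ and $\Phi$ acts through the right $A$-action, the identity reduces to a combination of the $3$-cocycle-type compatibility \eqref{comod-alg1} between $\Phi$ and $\Phi_\lambda$ and the quasi-coassociativity of $\Delta$. Once $N \in {}^{A}_{\kc}\Mo_{A}$ is confirmed, Henker's freeness theorem applies and yields that $N$ is free as a left $\kc$-module, whence projective, completing the argument.
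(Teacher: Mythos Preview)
Your approach is essentially identical to the paper's: the same $(\kc,A)$-bimodule structure on $A\otimes M$, the same appeal to Henker's result, and the coaction is indeed the $\Phi_\lambda$-twisted one you anticipate, namely $\delta=\Phi_\lambda(\Delta\otimes\id_M)$. The paper cites \cite[Lemma 3.6]{He} for projectivity rather than freeness, but otherwise your outline matches exactly.
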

\pf The object $A\ot M$ is in the category ${}^{A}_{\kc}\Mo_A$ as follows. The left $\kc$-action and the right $A$-action on $A\ot M$ are determined by
$$x\cdot (a\ot m)= x\_{-1}a\ot x\_0\cdot m, \quad (a\ot m)\cdot b= ab\ot m,$$
for all $x\in \kc$, $a,b\in A$ and $m\in M$. The coaction is determined by $\delta: A\ot M\to A\ot A\ot M$, $\delta=\Phi_\lambda(\Delta\ot \id_M)$.
It follows from \cite[Lemma 3.6]{He} that $A\ot M$ is a projective  $\kc$-module.
\epf

\subsection{Comodule algebras over radically graded quasi-Hopf algebras}
 Let $A$ be a  quasi-Hopf algebra \emph{radically graded}, that is there is an algebra grading
 $A=\oplus^m_{i=0} A[i]$, where $I:= \Rad A = \oplus_{i \geq 1} A[i]$ and  $I^k=  \oplus_{i \geq k} A[i]$
 for any $k=0\dots m$. Here $I^0=A$. Since $\Delta(I)\subseteq I\ot A + A\ot I$ then 
  $\Delta(I)\subseteq \sum_{j=0}^k\, I^j\ot I^{k-j}$ for any  $k=0\dots m$. In this case $A[0]$ is semisimple, $A$ is generated by $A[0]$ and $A[1]$, and the associator $\Phi$ is an element in $ A[0]^{\ot 3}$, see \cite[Lemma 2.1]{EG1}.

If $(\kc, \lambda, \Phi_\lambda)$ is a left $A$-comodule algebra, define
$$ \kc_i=\lambda^{-1}( I^i \ot \kc),\quad i=0\dots m.$$
This is an algebra filtration, thus we can consider the associated graded algebra $\gr \kc=\oplus_{i=0}^m \, \kc[i],\, \kc[i]=\kc_i/ \kc_{i+1}$.

\begin{lema}\label{graded-comod-alg}
\begin{enumerate}
\item[1.] The above filtration satisfies
\begin{align}\label{filtr-comod} \lambda(\kc_i)\subseteq \sum_{j=0}^i \, I^j \ot \kc_{i-j}.
\end{align}

\item[2.] There is a left $A$-comodule algebra structure $(\gr \kc,\overline{\lambda}, \overline{\Phi}_\lambda)$  satisfying
    \begin{align}\label{filtr-comod2}  \overline{\lambda}(\gr \kc(n))\subseteq \oplus_{k=0}^n \;A[k]\ot \kc[n-k].
\end{align}

\item[3.] $(\kc[0], \overline{\lambda}, \overline{\Phi}_\lambda) $ is a left $A[0]$-comodule algebra.
\end{enumerate}

\end{lema}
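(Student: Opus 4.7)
My plan is to prove (1) by induction on $i$, after which (2) and (3) follow as standard associated-graded constructions; the real content is in (1).

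For (1), the base case $i=0$ is immediate, since $\kc_0 = \lambda^{-1}(I^0 \otimes \kc) = \kc$, and the $j=0$ summand on the right-hand side already contains $\lambda(\kc)$. For the inductive step, fix $x \in \kc_i$, so $\lambda(x) \in I^i \otimes \kc$ by definition, and apply the quasi-coassociativity identity \eqref{comod-alg3}
\[
\Phi_\lambda (\Delta \otimes \id)\lambda(x) = (\id \otimes \lambda)\lambda(x)\,\Phi_\lambda.
\]
On the left-hand side I would use the filtration compatibility $\Delta(I^i) \subseteq \sum_{j=0}^i I^j \otimes I^{i-j}$ recalled at the beginning of Section \ref{s:quasi}, and on the right-hand side I would apply the inductive hypothesis to the tensor-factor components of $\lambda(x)$. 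The key simplification is that because $A$ is radically graded, the associator $\Phi$ lies in $A[0]^{\otimes 3}$, and the counit normalization \eqref{comod-alg2} combined with the $3$-cocycle condition \eqref{comod-alg1} implies that $\Phi_\lambda \equiv 1\ot 1\ot 1$ modulo $I\otimes A\otimes \kc + A\otimes I\otimes \kc + A \otimes A \otimes \kc_1$. Hence neither $\Phi$ nor $\Phi_\lambda$ shifts filtration degrees, and matching filtration degrees on the two sides of the coassociativity relation yields the claimed refinement.

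For (2), once (1) is established, $\{\kc_i\}$ is compatible with both the algebra structure (since $\lambda$ is an algebra map and $I^iI^j \subseteq I^{i+j}$) and the coaction. I would define $\overline\lambda : \gr\kc \to A \otimes \gr\kc$ on representatives: for $[x] \in \kc[n]$ lift to $x \in \kc_n$, decompose $\lambda(x) = \sum_{j=0}^n a_j \otimes y_j$ with $a_j \in I^j$ and $y_j \in \kc_{n-j}$ as permitted by (1), and set $\overline\lambda([x]) = \sum_{j=0}^n [a_j]\ot [y_j]$ in $\bigoplus_{j=0}^n A[j] \otimes \kc[n-j]$. Well-definedness on $\kc_n/\kc_{n+1}$ follows from (1) applied at level $n+1$. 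The graded associator $\overline\Phi_\lambda$ is obtained by applying $\id \otimes \id \otimes \pi$ to $\Phi_\lambda$, where $\pi:\kc \to \gr\kc$ is the canonical projection. The comodule-algebra axioms \eqref{comod-alg1}--\eqref{comod-alg3} pass to $\gr\kc$ by taking associated graded of each identity, since every operation involved is filtration-preserving, and \eqref{filtr-comod2} is exactly the graded reformulation of (1).

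For (3), specializing \eqref{filtr-comod2} to $n=0$ gives $\overline\lambda(\kc[0]) \subseteq A[0] \otimes \kc[0]$. Because $\Phi \in A[0]^{\otimes 3}$, the principal part of $\overline\Phi_\lambda$ lies in $A[0]\ot A[0]\ot \kc[0]$, so restricting $(\overline\lambda,\overline\Phi_\lambda)$ to $\kc[0]$ yields a left $A[0]$-comodule algebra, with the axioms inherited from those already verified for $(\gr\kc,\overline\lambda,\overline\Phi_\lambda)$.

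The main obstacle is the careful control of $\Phi_\lambda$ in the inductive step of (1); in the genuine Hopf case $\Phi$ and $\Phi_\lambda$ would be trivial and the filtration-degree matching would be routine. Establishing that $\Phi_\lambda$ respects the radical filtration on its first two factors and the Loewy filtration on the third is the one preliminary calculation that must be done before the induction closes; parts (2) and (3) are then essentially bookkeeping at the level of the associated graded.
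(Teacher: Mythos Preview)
Your attack on (1) works far too hard, and the ``main obstacle'' you isolate --- the congruence $\Phi_\lambda \equiv 1\ot 1\ot 1$ modulo $I\otimes A\otimes \kc + A\otimes I\otimes \kc + A \otimes A \otimes \kc_1$ --- is in fact false. Take $\kc = A$ with $\lambda = \Delta$ and $\Phi_\lambda = \Phi$; this satisfies \eqref{comod-alg1}--\eqref{comod-alg3} by the quasi-bialgebra axioms \eqref{q1}, \eqref{q3}, \eqref{q4}. The counit forces $\kc_1 = \Delta^{-1}(I\ot A) = 0$ (since $(\va\ot\id)\Delta=\id$ and $\va$ vanishes on $I$), so your congruence becomes $\Phi \equiv 1$ modulo $I\ot A\ot A + A\ot I\ot A$. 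But $\Phi \in A[0]^{\ot 3}$ meets that subspace only in $0$, so you would be forcing $\Phi = 1\ot 1\ot 1$, contrary to $A$ being a genuine quasi-Hopf algebra. More generally there is no a priori reason for $\Phi_\lambda$ to respect the filtration in the way your induction requires; indeed the paper \emph{defines} $\overline{\Phi}_\lambda$ as the (typically nontrivial) projection of $\Phi_\lambda$ to $A[0]\ot A[0]\ot \kc[0]$.

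The good news is that none of this matters, because (1) is a tautology. Since $I^0 = A$ we have $\kc_0 = \lambda^{-1}(A\ot\kc) = \kc$, so the $j=i$ summand on the right of \eqref{filtr-comod} is already $I^i\ot\kc_0 = I^i\ot\kc$, and $\lambda(\kc_i) \subseteq I^i\ot\kc$ is the \emph{definition} of $\kc_i$. No induction and no coassociativity are required; the paper's passing reference to \eqref{comod-alg3} is presumably for verifying the comodule-algebra axioms on $\gr\kc$ in part (2), not for (1) itself. Your treatment of (2) and (3) as routine associated-graded bookkeeping is correct in spirit; just note that $\overline{\Phi}_\lambda$ should be the projection to $A[0]\ot A[0]\ot \kc[0]$ (all three factors reduced), not merely $(\id\ot\id\ot\pi)(\Phi_\lambda)$.
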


\pf Item (1) follows from the definition of $\kc_i$ and equation \eqref{comod-alg3}.  For each $n=0\dots m$ there is a linear map $\overline{\lambda}: \gr \kc\to  A\ot \gr \kc$ such that the following diagram commutes

$$
\begin{CD}
 \kc_n@>{\lambda}>>   \sum_{j=0}^n \, I^j \ot \kc_{n-j}
\\
@V{\pi}VV @VVV \\
 \kc_n/ \kc_{n+1} @>>> \big(\sum_{j=0}^n \, I^j \ot \kc_{n-j}\big)/ \sum^{n+1}_{j=0}\, I^j \ot \kc_{n+1-j}
 \\
@VVV @VV\simeq V \\
\kc[n] @>{\overline{\lambda}}>> \oplus^n_{k=0}\, A[k] \otk\,
\kc[n-k].
\end{CD}
$$
Defining $\overline{\Phi}_\lambda$ as the projection of $\Phi_\lambda$ to $A[0]\ot A[0]\ot \kc[0]$ follows immediately that $(\gr \kc,\overline{\lambda}, \overline{\Phi}_\lambda)$ is a left $A$-comodule algebra.
\epf

\begin{lema}\label{right-simplicity} The following statements are equivalent:
\begin{enumerate}
  \item[1.] $\kc$ is a right $A$-simple left $A$-comodule algebra.
  \item[2.] $\kc[0]$ is a right $A[0]$-simple left $A[0]$-comodule algebra.
  \item[3.] $\gr\kc$ is a right $A$-simple left $A$-comodule algebra.
\end{enumerate}
\end{lema}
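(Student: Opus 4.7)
The plan is to establish the two equivalences $(1) \Leftrightarrow (3)$ and $(2) \Leftrightarrow (3)$ separately, using Lemma \ref{graded-comod-alg} as the key tool relating the coaction on $\kc$ to the graded comodule algebra structure on $\gr \kc$. A useful preliminary is that every costable right ideal of $\gr \kc$ is homogeneous: by the graded coaction formula \eqref{filtr-comod2}, the top-degree projection of $\overline{\lambda}$ picks out the top-degree homogeneous component of an element, which then lies in the ideal by counity, and iterating yields homogeneity.

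The easy directions proceed as follows. For $(3) \Rightarrow (1)$: given a nontrivial proper costable right ideal $J \subseteq \kc$, the induced filtration $J_n = J \cap \kc_n$ yields a nontrivial proper costable right ideal $\gr J \subseteq \gr \kc$, with costability coming from \eqref{filtr-comod} and nontriviality and properness from a dimension count. For $(3) \Rightarrow (2)$: a proper nontrivial costable right $A[0]$-ideal $J_0 \subseteq \kc[0] \subseteq \gr \kc$ generates the homogeneous right ideal $J_0 \cdot \gr \kc$, which is costable since $\overline{\lambda}$ is an algebra map with $\overline{\lambda}(J_0) \subseteq A[0] \ot J_0$, and whose degree-zero piece is $J_0 \cdot \kc[0] = J_0 \subsetneq \kc[0]$.

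For $(2) \Rightarrow (3)$: let $\bar J \subseteq \gr \kc$ be a nontrivial proper (homogeneous) costable right ideal and let $n_0$ be the smallest degree with $\bar J_{n_0} \neq 0$. For $x \in \bar J_{n_0}$, equation \eqref{filtr-comod2} places $\overline{\lambda}(x)$ in $\bigoplus_{k=0}^{n_0} A[k] \ot \kc[n_0-k]$, and costability together with minimality of $n_0$ forces the components with $k < n_0$ to vanish. If $n_0 > 0$ this gives $\overline{\lambda}(x) \in A[n_0] \ot \bar J_0 = 0$, contradicting counity; hence $n_0 = 0$ and $\bar J \cap \kc[0]$ is a proper nontrivial costable right $A[0]$-ideal in $\kc[0]$.

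The main obstacle is the remaining direction $(1) \Rightarrow (3)$: lifting a homogeneous costable right ideal of $\gr \kc$ back to a costable right ideal of $\kc$. The natural approach is an inductive lift along the filtration, selecting at each stage a subspace $J \cap \kc_n \subseteq \kc_n$ that projects onto $\bigoplus_{k \leq n} \bar J_k$ and remains closed under both right multiplication and the non-coassociative coaction $\lambda$; the presence of the associator $\Phi_\lambda$ in \eqref{comod-alg3} means its contribution must be absorbed carefully at each inductive step. A possibly cleaner route is to bypass the direct lift by establishing $(1) \Rightarrow (2)$ and closing the circle via the already-proved $(2) \Leftrightarrow (3)$; for $(1) \Rightarrow (2)$, one would lift a proper nontrivial costable right $A[0]$-ideal $J_0 \subseteq \kc[0]$ to a costable right ideal in $\kc$ using the multiplicative compatibility of the filtration together with \eqref{filtr-comod}.
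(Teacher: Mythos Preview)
Your argument for $(2)\Rightarrow(3)$ contains an index error that breaks the proof. With $x\in\bar J_{n_0}$ and $\overline{\lambda}(x)\in\bigoplus_{k=0}^{n_0} A[k]\ot\kc[n_0-k]$, costability forces each component into $A[k]\ot\bar J_{n_0-k}$; minimality of $n_0$ gives $\bar J_{n_0-k}=0$ exactly when $n_0-k<n_0$, i.e.\ when $k>0$, not when $k<n_0$. The surviving piece is therefore $k=0$, so $\overline{\lambda}(x)\in A[0]\ot\bar J_{n_0}$, which is nonzero and produces no contradiction via counity. What you actually learn is only that $\bar J_{n_0}$ is an $A[0]$-costable right $\kc[0]$-submodule of $\kc[n_0]$; since it is not a right ideal of $\kc[0]$, hypothesis $(2)$ does not apply to it. This gap is not cosmetic: without an additional argument you cannot force $n_0=0$.

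The paper organizes the proof differently and never passes through $(3)$; it proves $(1)\Leftrightarrow(2)$ directly. For $(2)\Rightarrow(1)$ it filters a costable right ideal $J\subseteq\kc$ by $J_k=J\cap\kc_k$, observes that $\overline J(0)=J/J_1\subseteq\kc[0]$ is an $A[0]$-costable right ideal, and iterates down the filtration. For the direction you flag as the obstacle, $(1)\Rightarrow(2)$, the paper's device is simply the preimage: given a costable right ideal $\overline J\subseteq\kc[0]$, set $J=\pi^{-1}(\overline J)$ for the canonical projection $\pi:\kc\to\kc[0]$. Then $\kc_1\subseteq J$, $J$ is a right ideal because $\kc_1$ is two-sided, and $J$ is $A$-costable; simplicity of $\kc$ then forces $\overline J\in\{0,\kc[0]\}$. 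This is exactly the ``cleaner route'' you mention at the end but do not commit to; adopting it from the outset removes the lifting problem you identify for $(1)\Rightarrow(3)$ and makes the detour through $(3)$ unnecessary.
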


\pf Assume $\kc[0]$ is a right $A[0]$-simple. Let $J\subseteq A$ be a right ideal $A$-costable. Consider the filtration $J=J_0 \supseteq J_1 \supseteq\dots \supseteq J_m$ given by $J_k=\lambda^{-1}(I^k\ot J)$ for all $k=0\dots m$. Set $\overline{J}(k)= J_k/J_{k+1}$ for any $k$ and $\overline{J}=\oplus_k\, \overline{J}(k)$. It follows that for any $n=0\dots m$
\begin{align}\label{filtr:on-ideals}  \overline{\lambda}(\overline{J}(n))\subseteq \oplus_{k=0}^n \;A[k]\ot \overline{J}(k).
\end{align}
In particular $\overline{J}(0)\subseteq \kc[0]$ is a right ideal $A[0]$-costable thus $\overline{J}=\kc[0]$ or $\overline{J}=0$. In the first  case $J=A$ and in the second case $J=J_1$. It follows from \eqref{filtr:on-ideals} that $\overline{J}(1)\subseteq\kc[0]$ is a a right ideal $A[0]$-costable. Hence $J=J_2$. Continuing this reasoning we obtain that $J=0$.

\medbreak

Assume now that $ \kc$ is a right $A$-simple. Let $\overline{J}\subseteq \kc[0]$ be a right  $A[0]$-costable ideal. Denote $\pi:\kc\to \kc[0]$ the canonical projection and $J=\pi^{-1}(\overline{J})$. Clearly $J$ is a right $A$-costable  ideal thus $J=0$ or $J=\kc$, thus $\overline{J}=0$ or $\overline{J}=\kc[0]$ respectively.\epf

As a consequence we have the following result.

\begin{cor} Let $(A,\Phi)$ be a  radically graded quasi-Hopf algebra and $(\kc, \lambda, \Phi_\lambda)$ be a left $A$-comodule algebra such that $\kc[0]=\ku 1$ then $A$ is twist equivalent to a Hopf algebra.
\end{cor}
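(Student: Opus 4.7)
The strategy is to extract a twist in $A\otimes A$ directly from the comodule-algebra associator $\Phi_\lambda$ by exploiting the hypothesis $\kc[0]=\ku 1$. The idea is that when the bottom of the filtration is just $\ku$, the degree-$0$ part of $\Phi_\lambda$ lives in $A[0]\ot A[0]\ot\ku$ and can be read off as an element of $A[0]\ot A[0]\subseteq A\ot A$ which trivializes $\Phi$ after twisting.

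First I would pass from $(\kc,\lambda,\Phi_\lambda)$ to the graded comodule algebra $(\gr\kc,\overline{\lambda},\overline{\Phi}_\lambda)$ supplied by Lemma \ref{graded-comod-alg}. Since $A$ is radically graded, the cited result \cite[Lemma 2.1]{EG1} places $\Phi\in A[0]^{\ot 3}$; combined with \eqref{filtr-comod2} and the hypothesis $\kc[0]=\ku 1$, this forces $\overline{\Phi}_\lambda\in A[0]\ot A[0]\ot \kc[0]\simeq A[0]\ot A[0]$. I would then \emph{define} $J\in A[0]\ot A[0]\subseteq A\ot A$ to be $\overline{\Phi}_\lambda$ under this identification. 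Invertibility of $J$ follows by restricting the identity $\Phi_\lambda\Phi_\lambda^{-1}=1\ot 1\ot 1$ to its degree-$0$ piece in $\gr\kc$.

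The crucial step is to show that $J$ satisfies the twist cocycle equation
\begin{equation*}
(1\ot J)(\id\ot\Delta)(J)\,\Phi \;=\; (J\ot 1)(\Delta\ot\id)(J).
\end{equation*}
This comes from projecting the associator relation \eqref{comod-alg1} for $\overline{\Phi}_\lambda$ to $A[0]^{\ot 3}\ot \kc[0]$. The key simplification occurs on the right-hand side: because $\kc[0]=\ku$, we have $\overline{\lambda}(1)=1\ot 1$, so $(\id\ot\id\ot\overline{\lambda})(\overline{\Phi}_\lambda)$ collapses to $J\ot 1\ot 1$ in $A[0]^{\ot 3}\ot\kc[0]$; the term $(\Delta\ot\id\ot\id)(\overline{\Phi}_\lambda)$ becomes $(\Delta\ot\id)(J)\ot 1$. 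A parallel computation on the left-hand side, using $\Delta(A[0])\subseteq A[0]\ot A[0]$, yields the displayed identity after dropping the trailing $\ot 1$.

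Once the cocycle relation is established, a direct manipulation gives
\[
\Phi_J \;=\; (1\ot J)(\id\ot\Delta)(J)\,\Phi\,(\Delta\ot\id)(J^{-1})(J^{-1}\ot 1) \;=\; (J\ot 1)(J^{-1}\ot 1) \;=\; 1\ot 1\ot 1,
\]
so the twisted quasi-Hopf algebra $A_J$ has trivial associator and is therefore a Hopf algebra, as desired. The only technical nuisance I anticipate is arranging the counit normalizations $(\va\ot\id)(J)=(\id\ot\va)(J)=1$ so that $J$ is a Drinfeld twist on the nose; however, any cocycle $J$ of the above form can be rescaled by a suitable element of $A[0]^{\times}\ot A[0]^{\times}$ without altering $\Phi_J$, so this should be a formality rather than a genuine obstacle.
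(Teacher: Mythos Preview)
Your proposal is correct and follows essentially the same route as the paper: use Lemma~\ref{graded-comod-alg}(3) together with $\kc[0]=\ku 1$ to write $\overline{\Phi}_\lambda=J\ot 1$ for an invertible $J\in A[0]\ot A[0]$, and then read the twist-cocycle identity for $J$ directly off of axiom~\eqref{comod-alg1} (the paper records the conclusion as $\Phi=dJ$, which is the same statement). Your caution about counit normalization is in fact unnecessary: \eqref{comod-alg2} already gives $(\id\ot\va)(J)=1$, and applying $\va\ot\id\ot\id$ to your displayed cocycle identity, combined with invertibility of $J$, forces $(\va\ot\id)(J)=1$ as well.
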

\pf Since $(\kc[0], \overline{\lambda}, \overline{\Phi}_\lambda) $ is a left $A[0]$-comodule algebra then there exists an invertible element $J\in A\ot A$ such that $J\ot 1=\overline{\Phi}_\lambda$. Equation \eqref{comod-alg1} implies that $\Phi=dJ$.\epf

\subsection{Module categories over quasi-Hopf algebras}
For any comodule algebra over a quasi-Hopf algebra $A$ there is associated a module category over $\Rep(A)$.

\begin{lema}\label{exact-mod-quasi} Let $A$ be a finite-dimensional quasi-Hopf algebra.
\begin{enumerate}
  \item[1.] If $(\kc, \lambda, \Phi_\lambda)$ is a left $A$-comodule algebra then the category ${}_{\kc}\Mo$ is a module category over $\Rep(A)$. It is exact if $\kc$ is right $A$-simple.
 \item[2.] If $\Mo$ is an exact module category over $\Rep(A)$ there exists a left $A$-comodule algebra $(\kc, \lambda,\Phi_\lambda)$ such that $\Mo\simeq {}_{\kc}\Mo$ as module categories over $\Rep(A)$.
      \end{enumerate}
\end{lema}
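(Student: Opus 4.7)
The plan is to follow the strategy for module categories over Hopf algebras, adapting to the presence of the associator $\Phi_\lambda$.

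For part (1), I would define the action $\otb\colon \Rep(A)\times {}_\kc\Mo\to {}_\kc\Mo$ by $(V,M)\mapsto V\ot M$ with $\kc$-module structure $x\cdot(v\ot m)= x\_{-1}v\ot x\_0\cdot m$; this is well-defined because $\lambda$ is an algebra map. The associativity isomorphism $m_{V,W,M}\colon (V\ot W)\otb M\to V\otb(W\otb M)$ is the action of $\Phi_\lambda\in A\ot A\ot \kc$ on $V\ot W\ot M$. Axiom \eqref{comod-alg3} makes $m_{V,W,M}$ a $\kc$-module map, the pentagon identity for $\otb$ is exactly \eqref{comod-alg1} applied to $V\ot W\ot X\ot M$, and the unit axioms follow from \eqref{comod-alg2}. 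For the exactness claim, it suffices to prove that $A\otb M$ is a projective $\kc$-module for every $M\in {}_\kc\Mo$, since every projective object of $\Rep(A)$ is a direct summand of a finite direct sum of copies of the regular representation $A$, and so $P\otb M$ becomes a direct summand of a finite direct sum of copies of $A\otb M$. The projectivity of $A\otb M$, under the assumption that $\kc$ is right $A$-simple, is precisely Lemma \ref{skryabin-quasi}.

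For part (2), I would use the internal $\End$ construction. Since $\Mo$ is exact over the finite tensor category $\Rep(A)$, by \cite{eo} there is a projective generator $M\in\Mo$. Set $\kc:= \uend(M)^{\op}$, the algebra object in $\Rep(A)$ representing the functor $V\mapsto \Hom_{\Mo}(V\otb M, M)$. The underlying $A$-module structure on $\kc$ yields a linear map $\lambda\colon \kc\to A\ot \kc$ which is an algebra morphism, while the invertible element $\Phi_\lambda\in A\ot A\ot \kc$ is produced by evaluating the module-category associativity $m_{A,A,M}$ against the evaluation map of the internal $\End$. The equivalence $\Mo\simeq {}_\kc\Mo$ then follows from the universal property of $\uend(M)$ combined with the fact that $M$ is a projective generator; explicitly, the functor $\Hom_{\Mo}(M,-)\colon \Mo\to{}_\kc\Mo$ is exact, faithful, and essentially surjective by the progenerator hypothesis.

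The main obstacle is the verification in part (2) that the triple $(\kc,\lambda,\Phi_\lambda)$ produced from the internal $\End$ actually satisfies the quasi-Hopf comodule algebra axioms \eqref{comod-alg1}--\eqref{comod-alg3}. In the Hopf case these reduce to coassociativity of the coaction and are automatic, but here each pentagon-type diagram involving the associator $\Phi$ of $A$ must be tracked carefully so that the resulting $\Phi_\lambda$ satisfies the twisted cocycle identity \eqref{comod-alg1}. This quasi-Hopf reconstruction step replaces the usual Tannakian argument of the Hopf setting and is the technical heart of the lemma.
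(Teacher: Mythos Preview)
Your treatment of part (1) is correct and coincides with the paper's proof: the action is the tensor product with $\kc$-structure induced by $\lambda$, the associativity is multiplication by $\Phi_\lambda$, the axioms follow from \eqref{comod-alg1}--\eqref{comod-alg3}, and exactness reduces to Lemma~\ref{skryabin-quasi} exactly as you argue.

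For part (2) your overall strategy is also the paper's: the paper simply invokes \cite[Thm.~3.17]{eo} and says that the proof of \cite[Prop.~1.19]{AM} extends \emph{mutatis mutandis} to the quasi-Hopf setting. There is, however, a concrete slip in your description of where $\lambda$ comes from. The internal End $\uend(M)$ is an object of $\Rep(A)$, so it carries an $A$-\emph{module} structure $A\ot\kc\to\kc$; this does not by itself give a map $\kc\to A\ot\kc$, and dualizing would land you in $A^*$-comodules rather than $A$-comodules. The construction in \cite{AM} instead takes $\kc=\End_\Mo(M)^{\op}$ as an ordinary $\ku$-algebra (so that $\Hom_\Mo(M,-)$ identifies $\Mo$ with ${}_\kc\Mo$ as abelian categories), and then reads $\lambda$ and $\Phi_\lambda$ off from the module-category action of $A$ on $M$: the $\kc$-module structure on $A\otb M$ yields, upon evaluation at $1_A$, the coaction $\lambda\colon\kc\to A\ot\kc$, and the associativity constraint $m_{A,A,M}$ evaluated at $1\ot 1$ yields $\Phi_\lambda$. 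Your identification of the verification of \eqref{comod-alg1}--\eqref{comod-alg3} as the technical heart is accurate; that is precisely the ``mutatis mutandis'' step the paper leaves to the reader.
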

\pf 1. The action $\otb:\Rep(A)\times {}_{\kc}\Mo\to {}_{\kc}\Mo$ is given by the tensor product over the field $\ku$ where the action on the tensor product is given by $\lambda$. The associativity isomorphisms $m_{X,Y,M}: (X\otimes Y)\otimes M \to X\otimes (Y\otimes M)$ are given by
$$ m_{X,Y,M}(x\ot y\ot m)=\Phi^1_\lambda\cdot x\ot \Phi^2_\lambda\cdot y\ot \Phi^3_\lambda\cdot m,$$
for all $x\in X$, $y\in Y$, $M\in M$, $X, Y\in \Rep(A)$, $M\in {}_{\kc}\Mo$. To prove that ${}_{\kc}\Mo$ is exact, it is enough to verify that $A\ot M$ is projective for any $M\in {}_{\kc}\Mo$ but this is Lemma \ref{skryabin-quasi}.

2. This is a straightforward consequence of \cite[Thm. 3.17]{eo}, the proof of \cite[Prop. 1.19]{AM} extends \textit{mutatis mutandis} to the quasi-Hopf setting.
\epf

\begin{defi} Two left $A$-comodule algebras $(\kc, \lambda, \Phi_\lambda)$, $(\kc', \lambda', \Phi'_\lambda)$ are \textit{equivariantly Morita equivalent} if the corresponding module categories are equivalent.
\end{defi}

\subsection{Comodule algebras coming from twisting}\label{twisting:comodulealg}

Let $(A, \Phi)$ be a quasi-Hopf algebra and $J\in A\ot A$ be a twist. Let $(K, \lambda, \Phi_\lambda)$ be a left $A$-comodule algebra. Let us denote by $(K_J, \lambda_J, \widetilde{\Phi}_\lambda)$ the following left $A_J$-comodule algebra. As algebras $K_J=K$, the coaction $\lambda_J=\lambda$ and $\widetilde{\Phi}_\lambda=\Phi_\lambda(J^{-1}\ot 1)$.

The following results are straightforward.
\begin{lema}\label{comod-alg:twisting} $(K_J, \lambda_J, \widetilde{\Phi}_\lambda)$ is a left
  $A_J$-comodule algebra. It is right $A$-simple if and only if $(K, \lambda, \Phi_\lambda)$ is right $A$-simple.\qed
\end{lema}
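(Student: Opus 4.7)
The plan is to verify the three defining axioms of a left $A_J$-comodule algebra for $(K_J, \lambda_J, \widetilde{\Phi}_\lambda)$, and then observe that the right $A$-simplicity statement is immediate from the fact that nothing relevant has actually changed at the level of algebras and coactions.

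First, $\lambda_J=\lambda$ is automatically an algebra map since the multiplications on $A$ and $A_J$, and on $K$ and $K_J$, coincide. The counit axiom \eqref{comod-alg2} reduces to showing $(\id\ot\va\ot\id)(\Phi_\lambda(J^{-1}\ot 1))=1$; since $(\va\ot\id)(J^{-1})=1$ and $(\id\ot\va\ot\id)(\Phi_\lambda)=1$, this is routine. The coaction compatibility axiom \eqref{comod-alg3} for the twisted datum is verified directly:
\begin{align*}
\widetilde{\Phi}_\lambda(\Delta_J\ot\id)\lambda_J(x)
&= \Phi_\lambda(J^{-1}\ot 1)(J\ot 1)(\Delta\ot\id)\lambda(x)(J^{-1}\ot 1)\\
&= \Phi_\lambda(\Delta\ot\id)\lambda(x)(J^{-1}\ot 1)\\
&= \big((\id\ot\lambda)\lambda(x)\big)\Phi_\lambda(J^{-1}\ot 1)
 = \big((\id\ot\lambda_J)\lambda_J(x)\big)\widetilde{\Phi}_\lambda,
\end{align*}
using \eqref{comod-alg3} for the original comodule algebra at the second step.

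The main obstacle is the 3-cocycle-like identity \eqref{comod-alg1} for the twisted data. One substitutes $\Phi_J$ from \eqref{g2} and $\widetilde{\Phi}_\lambda=\Phi_\lambda(J^{-1}\ot 1)$ on both sides, then expands $\Delta_J=J\Delta(-)J^{-1}$ in the terms $(\id\ot\Delta_J\ot\id)$ and $(\Delta_J\ot\id\ot\id)$. After telescoping the $J^{\pm 1}$ factors (most of them cancel pairwise because the twist terms produced by expanding $\Delta_J$ are exactly the ones appearing in $\Phi_J$), one is reduced to the original cocycle identity \eqref{comod-alg1} for $(K,\lambda,\Phi_\lambda)$ combined with the quasi-coassociativity \eqref{q1} for $A$. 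This is a standard Drinfeld-twist calculation that we omit.

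Finally, for the simplicity statement, since $K_J=K$ as algebras and $\lambda_J=\lambda$, a subspace $\mathfrak{a}\subseteq K$ is a right ideal if and only if it is a right ideal of $K_J$, and it satisfies $\lambda(\mathfrak{a})\subseteq A\ot\mathfrak{a}$ if and only if $\lambda_J(\mathfrak{a})\subseteq A\ot\mathfrak{a}$. Hence the two right $A$-simplicity conditions coincide tautologically.
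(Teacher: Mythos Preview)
Your proof is correct. The paper itself offers no argument at all: the lemma is preceded by the sentence ``The following results are straightforward'' and carries only a \qed, so your verification of \eqref{comod-alg2}, \eqref{comod-alg3}, the (omitted but standard) twist computation for \eqref{comod-alg1}, and the tautological simplicity argument is strictly more than the paper provides.
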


\begin{lema}\label{morita-by-twisting} Let $J\in A\ot A$ be a twist. If $(K, \lambda, \Phi_\lambda)$ and $(K', \lambda', \Phi'_\lambda)$ are equivariantly Morita equivalent  $A$-comodule algebras then $(K_J, \lambda_J, \Phi_\lambda (J^{-1}\ot 1))$ and $(K'_J, \lambda'_J, \Phi'_\lambda (J^{-1}\ot 1))$ are equivariant Morita equivalent  $A_J$-comodule algebras.\qed
\end{lema}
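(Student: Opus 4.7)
The strategy is to observe that twisting by $J$ induces a tensor equivalence $\Rep(A)\simeq \Rep(A_J)$, and to show that under this equivalence the module category ${}_{K}\Mo$ over $\Rep(A)$ is identified with ${}_{K_J}\Mo$ over $\Rep(A_J)$. Since equivariant Morita equivalence is defined as equivalence of the associated module categories (Lemma \ref{exact-mod-quasi}), the result will then follow immediately by transport of structure.

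First I would recall the well-known tensor equivalence $(G,\zeta):\Rep(A_J)\to\Rep(A)$ whose underlying functor is the identity on objects and morphisms and whose monoidal structure $\zeta_{X,Y}:X\ot Y\to X\ot Y$ is given by the action of $J\in A\ot A$ through the algebra map $\Delta$ (with inverse given by the action of $J^{-1}$). The compatibility of $\zeta$ with the associators, i.e.\ equation \eqref{tensorf1}, reduces to the twist formula \eqref{g2} that relates $\Phi$ and $\Phi_J$.

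Next I would verify that the identity functor on underlying abelian categories lifts to a module equivalence ${}_{K_J}\Mo \xrightarrow{\sim}({}_{K}\Mo)^G$ over $\Rep(A_J)$ when equipped with the module structure $c_{X,M}:X\otb^G M\to X\otb M$ given by the action of $J$ via $\lambda$ on $X\otk M$. Both module categories have literally the same action functor (since $K=K_J$ as algebras and $\lambda=\lambda_J$ as coactions), so the content of the equivalence lies entirely in the associativity constraints: by Lemma \ref{exact-mod-quasi} these are controlled respectively by $\widetilde{\Phi}_\lambda = \Phi_\lambda(J^{-1}\ot 1)$ and by $\Phi_\lambda$. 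The coherence condition \eqref{modfunctor1} for $c$, written out explicitly, translates into the relation $\widetilde{\Phi}_\lambda = \Phi_\lambda(J^{-1}\ot 1)$, which is precisely how $K_J$ was defined in Subsection \ref{twisting:comodulealg}.

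Finally, given an equivalence of $\Rep(A)$-module categories ${}_{K}\Mo\simeq {}_{K'}\Mo$ witnessing the equivariant Morita equivalence of $K$ and $K'$, I would transport it along $G$ to obtain an equivalence of $\Rep(A_J)$-module categories ${}_{K_J}\Mo\simeq {}_{K'_J}\Mo$, which is exactly the equivariant Morita equivalence of $K_J$ and $K'_J$. The only nontrivial point is the coherence calculation in the previous paragraph; I expect no genuine obstacle, since (in parallel with Lemma \ref{comod-alg:twisting}) the definition of the twisted comodule algebra is engineered to make precisely this identification work on the nose.
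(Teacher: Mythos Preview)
Your approach is correct and is exactly the sort of argument the paper has in mind: the lemma is stated with a \qed\ and no proof, having been declared ``straightforward'' together with Lemma~\ref{comod-alg:twisting}. One small slip: with your conventions the two $\Rep(A_J)$-module categories ${}_{K_J}\Mo$ and $({}_K\Mo)^G$ are \emph{literally identical} (same action via $\lambda$, same associator $\Phi_\lambda(J^{-1}\ot 1)$ by the computation you outline), so the correct module structure on the identity functor is $c_{X,M}=\id$, not the action of $J$ (which in any case does not act on $X\otk M$ since $J\in A\ot A$ while $M$ is only a $K$-module); compare the closely related Lemma~\ref{invariance-dynamical-twist}, where a nontrivial $c$ is needed because there one is comparing two $\Rep(H)$-module structures rather than transporting along a tensor equivalence.
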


\section{Equivariantization of quasi-Hopf algebras}\label{s:equivarin-quasi}

For a quasi-Hopf algebra $A$ we shall explain the notion of a \textit{crossed system over} $A$ and discuss its relation with  the equivariantization of the category $\Rep(A)$.

\smallbreak

Let $A_1, A_2$ be quasi-Hopf algebras. A {\em twisted homomorphism} between $A_1$ and $A_2$ is pair
$(f,J)$ consisting of a homomorphism of algebras $f:A_1\to A_2$
and an invertible element $J\in A_{2}^{\otimes 2}$ such that
\begin{equation}\label{tw1}
\Phi_2(\Delta\otimes \id)(J)(J\otimes 1) =
 (\id\otimes\Delta )(J)(1\otimes J)(f^{\otimes 3})(\Phi_1 ),
\end{equation}
\begin{equation}\label{tw2}
(\va \ot \id)(J)=(\id\ot \va)(J)=1,
\end{equation}
\begin{equation}\label{tw3}
\varepsilon(f(a))= \varepsilon(a),
\end{equation}

\begin{equation}\label{tw4}
\Delta (f(a))J = J(f^{2\otimes}(\Delta (a))), \quad \text{ for all }
a\in A.
\end{equation}

\begin{rmk}
If $(f,J):A_1\to A_2$ is a twisted homomorphism,  then $J^{-1}\in A_2\otimes A_2$ is a twist and  $f:A_1\to (A_2)_{J^{-1}}$ is a homomorphism of quasi-bialgebras.
\end{rmk}

We define the category $\underline{\End}^{\Tw}(A_1,A_2)$ whose objects  are twisted homomorphism from $A_1$ to $A_2$. A {\em morphism}  between two twisted homomorphisms
$(f,J),(f',J'):A_1\to A_2$ is an element $c\in A_2$ such that $cf(a) = f'(a)c$
for any $a\in A_1$ and $\Delta(c)J = J'(c\otimes c)$. The composition of $a:f\to g$, $b:g\to h$, is  $ba: f\to h$. If $(f,J_f):A_1\to A_2$ and $(g,J_g):A_2\to A_3$ are twisted homomorphism, we define the composition as the twisted homomorpshism $(g \circ f, J_g (g\otimes g)(J_f)):A_1\to A_3$.

\medbreak

To any twisted homomorphism $(f, J):A_1\to A_2$ there is associated  a tensor functor $$(f^* , \xi^J):\Rep(A_2)\to \Rep(A_1),$$
where $f^*(V)=V$ for all $V\in \Rep(A_2)$, and $f^*$ is the identity over arrows. The $A_1$-action on $f^*(V)$ is given through the morphism $f$.  The monoidal structure is given by applying the element
$J\in A_{2}^{\otimes 2}$:
$${\xi^J}_{M,N}:f^* (M)\otimes f^* (N) \to f^* (M\otimes N),\quad
{\xi^J}_{M,N}(m\otimes n) = J(m\otimes n),$$
for any $M, N\in \Rep(A_2)$, $m\in M$, $n\in N$.
Morphisms between twisted homomorphisms $f,f':A_1\to A_2$
of quasi-Hopf algebras correspond to tensor natural transformations
between the associated tensor functors.

\subsection{Crossed system over a quasi-Hopf algebra}
Given a quasi-Hopf algebra $A$ we shall denote by $\underline{\Aut}^{\Tw}(A)$ the (monodial) subcategory of $\underline{\End}^{\Tw}(A)$ where objects are twisted automorphisms of $A$, and arrows are isomorphisms of twisted automorphisms.
\smallbreak
Let $G$ be a group, and let $A$ be a quasi-Hopf algebra. A \emph{$G$-crossed system} over $A$ is a monoidal functor $*: \underline{G}\to \underline{\Aut}^{\Tw}(A)$ such that $e_*= (\id_A, 1\otimes 1)$.
\smallbreak

More explicitly a $G$-crossed system consists of the following data:

\begin{itemize}
  \item A twisted automorphism $(\sigma_*, J_\sigma)$ for each $\sigma\in G$,
  \item an element $\theta_{(\sigma,\tau)}\in A^{\times}$ for each $\sigma, \tau \in G$,
\end{itemize}
such that for all $a\in A$, $\sigma,\tau, \rho \in G$,

\begin{align}
\label{(i)} \varepsilon(\theta_{(\sigma,\tau)})&=1,\\
 \label{(ii)} (1_*, J_1)&=(\id, 1\ot 1),\\
  \label{(iii)} \theta_{(\sigma,\tau)}(\sigma\tau)_*(a)&= \sigma_*(\tau_*(a))\theta_{(\sigma,\tau)},\\
  \label{(iv)} \theta_{(\sigma,\tau)}\theta_{(\sigma\tau,\rho)} &= \sigma_*(\theta_{(\tau,\rho)})\theta_{\sigma,\tau\rho},\\
  \label{(v)} \theta_{(1,\sigma)}&=\theta_{(\sigma,1)}=1,\\
  \label{(vi)} \Delta(\theta_{(\sigma,\tau)})J_{\sigma\tau} &= J_\sigma (\sigma_* \otimes \sigma_*)(J_\tau)(\theta_{(\sigma,\tau)}\otimes\theta_{(\sigma,\tau)} ).
\end{align}

\medbreak

Let $A\# G$ be the vector space $A\otk \ku G$ with product and coproduct $$(x\#\sigma)(y\# \tau)= x\sigma_*(y)\theta_{(\sigma,\tau)}\#\sigma\tau,\quad\Delta(x\#\sigma)= x\_1J^1_\sigma\# \sigma\otimes x\_2J^2_\sigma\#\sigma,$$
for all $x,y\in A$, $\sigma, \tau\in G$.

\begin{prop}\label{prop producto cruzado}
The foregoing operations makes the vector space $A\# G$ into a quasi-bialgebra with associator $\Phi^1\#e\otimes \Phi^2\#e\otimes \Phi^3\#e$, and counit $\varepsilon(x\# \sigma)=\varepsilon(x)$ for all $x\in A, \sigma\in G$.
\end{prop}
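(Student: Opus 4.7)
My plan is to verify the quasi-bialgebra axioms (\ref{q1})--(\ref{q4}) one by one, exploiting the fact that the associator $\Phi\#e$ lives entirely in the ``$e$-component'' so most of the associator-related axioms reduce to the corresponding axioms in $A$.

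First, I would check associativity of the product on $A\#G$. Expanding $\bigl((x\#\sigma)(y\#\tau)\bigr)(z\#\rho)$ and $(x\#\sigma)\bigl((y\#\tau)(z\#\rho)\bigr)$ and matching the group parts $\sigma\tau\rho$, the algebra parts must agree; moving $(\sigma\tau)_*(z)$ past $\theta_{(\sigma,\tau)}$ via \eqref{(iii)} produces a factor $\sigma_*(\tau_*(z))$, and the equality of the resulting $\theta$-products is exactly the $2$-cocycle condition \eqref{(iv)}. Unitality follows from \eqref{(ii)} and \eqref{(v)}: $(1\#e)(x\#\sigma)=\sigma_*\ $doesn't even enter because $e_*=\id$, and the $\theta_{(e,\sigma)}=\theta_{(\sigma,e)}=1$ kill the cocycle factor.

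Next I would show $\Delta$ is an algebra homomorphism; this is the main obstacle. One must check
\[
\Delta\bigl((x\#\sigma)(y\#\tau)\bigr) = \Delta(x\#\sigma)\,\Delta(y\#\tau).
\]
Expanding the left-hand side gives
\[
x\_1\sigma_*(y\_1)\theta_{(\sigma,\tau)}\_1 J^1_{\sigma\tau}\#\sigma\tau \otimes x\_2\sigma_*(y\_2)\theta_{(\sigma,\tau)}\_2 J^2_{\sigma\tau}\#\sigma\tau,
\]
after applying $\Delta$ to $\sigma_*(y)\theta_{(\sigma,\tau)}$ using \eqref{tw4} for the twisted automorphism $\sigma_*$ (which gives $\Delta(\sigma_*(y))J_\sigma = J_\sigma(\sigma_*\otimes\sigma_*)(\Delta(y))$). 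The right-hand side, computed from the definition of the product on $A\#G$ applied to tensor factors and the coproduct formula, produces $J_\sigma(\sigma_*\otimes\sigma_*)(J_\tau)(\theta_{(\sigma,\tau)}\otimes\theta_{(\sigma,\tau)})$ times $x\_1\sigma_*(y\_1)\otimes x\_2\sigma_*(y\_2)$. The equality of the two expressions is precisely condition \eqref{(vi)}. The only subtle point is the correct bracketing of the ``$J$''-factors, and I would carry this out carefully using \eqref{tw4} once for $\sigma_*$ applied to $y$ and the definition of the twisted coproduct.

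The counit axioms \eqref{q2} follow from $\varepsilon(x\#\sigma)=\varepsilon(x)$, the normalization \eqref{tw2} giving $(\varepsilon\otimes\id)(J_\sigma)=(\id\otimes\varepsilon)(J_\sigma)=1$, and the fact that $\varepsilon$ is multiplicative on $A\#G$ by \eqref{(i)} together with $\varepsilon\circ\sigma_* = \varepsilon$ from \eqref{tw3}. For the quasi-coassociativity axiom \eqref{q1} and the $3$-cocycle condition \eqref{q3}, note that since $\Phi\#e\otimes\Phi\#e\otimes\Phi\#e$ has group component concentrated at $e$, after evaluating both sides of \eqref{q1} on $x\#\sigma$, every group component equals $\sigma$ (resp.\ $\sigma$ in each tensor factor for \eqref{q3}), and the algebra components reduce, after untangling the $J_\sigma$ factors via \eqref{tw1}, to the corresponding identities satisfied by $\Phi$ and $\Delta$ in the quasi-Hopf algebra $A$. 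Finally, \eqref{q4} for the associator $\Phi\#e$ reduces to $(\id\otimes\varepsilon\otimes\id)(\Phi)=1\otimes 1$ in $A$.

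The one step that genuinely uses all of the crossed-system data simultaneously is the verification that $\Delta$ is multiplicative, and I would isolate it as a separate lemma. Once that is established, the remaining axioms are essentially a bookkeeping exercise reducing each identity to its counterpart in $A$, using \eqref{tw1}--\eqref{tw4} to shuttle the $J_\sigma$'s past elements of $A$.
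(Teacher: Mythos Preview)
Your proposal is correct and follows essentially the same approach as the paper's proof, which is simply a terse list of which crossed-system and twisted-homomorphism axioms yield which quasi-bialgebra axioms: associativity is ``straightforward'' (your use of \eqref{(iii)} and \eqref{(iv)}), \eqref{q1} from \eqref{tw1}, $\varepsilon$ an algebra map from \eqref{tw3} and \eqref{(i)}, \eqref{q2} from \eqref{tw2}, \eqref{q3}--\eqref{q4} from the form of the associator, and $\Delta$ multiplicative from \eqref{tw4} and \eqref{(vi)}. You have merely spelled out the computations the paper leaves implicit, and your identification of the multiplicativity of $\Delta$ as the one genuinely nontrivial step is apt.
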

\begin{proof}
It is straightforward to see that $A\# G$ is an associative algebra
with unit $1\#e$. Equation \eqref{q1} follows from  \eqref{tw1}. The
map $\varepsilon$ is an algebra morphism by \eqref{tw3} and
\eqref{(i)}. Equations \eqref{q2} follow from \eqref{tw2}, equations
\eqref{q3} and \eqref{q4} follow by the definition of the
associator. Finally $\Delta$ is an algebra morphism by \eqref{tw4}
and \eqref{(vi)}.
\end{proof}
\subsection{Antipodes of crossed systems}

Let $G$ be a group, $(A,\Phi, S,\alpha,\beta)$ be a quasi-Hopf algebra and $(\sigma_*,\theta_{(\sigma,\tau)}, J_\sigma)_{\sigma,\tau \in G}$ a $G$-crossed system over $A$. An antipode for $(\sigma_*,\theta_{(\sigma,\tau)}, J_\sigma)_{\sigma,\tau \in G}$ is a function $\upsilon: G\to A^{\times}$ such that

\begin{gather}
    \upsilon_{\sigma\tau}(\sigma\tau)_*(S(\theta_{\sigma,\tau})) =\upsilon_\tau(\tau^{-1})_*(\upsilon_\sigma)\theta_{\tau^{-1},\sigma^{-1}},\\
    \upsilon_\sigma^{-1}S(x)\upsilon_\sigma =  (\sigma^{-1})_*(S(\sigma_*(x))),\\
    \upsilon_\sigma(\sigma^{-1})_*((S(J_\sigma^1)\alpha J_\sigma^2))\theta_{\sigma^{-1},\sigma}=\alpha,\\
    J_\sigma^{1}\sigma_*(\beta\upsilon_\sigma(\sigma^{-1})_*(S(J^2_\sigma))) \theta_{\sigma,\sigma^{-1}}=\beta,
\end{gather}for all $\sigma \in G$, where $J_\sigma= J_\sigma^1\otimes J_\sigma^2$. The next proposition follows by a straightforward verification.

\begin{prop}
Let $\upsilon: G\to A^{\times}$ be an antipode for $(\sigma_*,\theta_{(\sigma,\tau)}, J_\sigma)_{\sigma,\tau \in G}$. Then $(S,\alpha\# e, \beta\#e)$ is an antipode for $A\# G$, where
$$S(x\# \sigma)= \upsilon_\sigma(\sigma^{-1})_*(S(x))\#\sigma^{-1},$$ for all $\sigma \in G, x\in A$.\qed\end{prop}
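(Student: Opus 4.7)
The plan is to verify, by a direct computation, the four antipode axioms \eqref{q5}--\eqref{q6} for the proposed triple $(S, \alpha\#e, \beta\#e)$ on the quasi-bialgebra $A\#G$, together with checking that $S$ is an anti-algebra map. Each one of the conditions defining an antipode $\upsilon$ for the crossed system will correspond to exactly one of the axioms, with the semisimple ``group part'' contributing a factor that is absorbed by the corresponding identity involving $\upsilon_\sigma$ and $\theta$.

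Step one is to check that $S$ is an anti-homomorphism of algebras. Using $S(x\#\sigma)=\upsilon_\sigma(\sigma^{-1})_*(S(x))\#\sigma^{-1}$, I would expand
$S((x\#\sigma)(y\#\tau))=\upsilon_{\sigma\tau}(\sigma\tau)^{-1}_{*}\bigl(S(\theta_{(\sigma,\tau)})S(\sigma_{*}(y))S(x)\bigr)\#(\sigma\tau)^{-1}$
and compare it with $S(y\#\tau)S(x\#\sigma)$, which after using the product of $A\#G$ becomes an element whose group component is also $\tau^{-1}\sigma^{-1}$. The equality of the $A$-components is precisely the content of the first antipode condition $\upsilon_{\sigma\tau}(\sigma\tau)_*(S(\theta_{(\sigma,\tau)}))=\upsilon_\tau(\tau^{-1})_*(\upsilon_\sigma)\theta_{\tau^{-1},\sigma^{-1}}$ combined with the conjugation identity $\upsilon_\sigma^{-1}S(x)\upsilon_\sigma=(\sigma^{-1})_*(S(\sigma_*(x)))$, which is exactly what one needs to swap $S(x)$ past $\upsilon_\sigma(\sigma^{-1})_*$.

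Step two handles the two identities in \eqref{q5}. For $h=x\#\sigma$ the comultiplication gives $\Delta(h)=x_{(1)}J_\sigma^1\#\sigma\otimes x_{(2)}J_\sigma^2\#\sigma$, so
\begin{align*}
S(h_{(1)})(\alpha\#e)h_{(2)} &=\upsilon_\sigma(\sigma^{-1})_*\!\bigl(S(x_{(1)}J_\sigma^1)\alpha\, x_{(2)}J_\sigma^2\bigr)\theta_{\sigma^{-1},\sigma}\#e.
\end{align*}
Applying the antipode axiom of $A$ inside the argument of $(\sigma^{-1})_*$ collapses $S(x_{(1)})\alpha x_{(2)}$ to $\varepsilon(x)\alpha$, leaving $\varepsilon(x)\upsilon_\sigma(\sigma^{-1})_*(S(J_\sigma^1)\alpha J_\sigma^2)\theta_{\sigma^{-1},\sigma}\#e$, which by the third condition on $\upsilon$ equals $\varepsilon(h)\alpha\#e$. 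The dual identity $h_{(1)}(\beta\#e)S(h_{(2)})=\varepsilon(h)\beta\#e$ is verified in exactly the same way, this time invoking the fourth condition $J_\sigma^1\sigma_*(\beta\upsilon_\sigma(\sigma^{-1})_*(S(J_\sigma^2)))\theta_{\sigma,\sigma^{-1}}=\beta$.

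Step three disposes of the two equations \eqref{q6} involving the associator. Since the associator of $A\#G$ is $\Phi^1\#e\otimes\Phi^2\#e\otimes\Phi^3\#e$ and $S(x\#e)=S(x)\#e$ (using $\upsilon_e=1$, which follows from setting $\sigma=\tau=e$ in the cocycle-like condition on $\upsilon$), the computation reduces immediately to the corresponding axioms for $\Phi$ in $A$. The only subtle point worth flagging, and what I would expect to be the main bookkeeping obstacle, is a consistent handling of the direction in which $(\sigma\tau)^{-1}_*$ is applied to $S(\theta_{(\sigma,\tau)})$ in the antipode axioms of the crossed system, since rewriting that term by means of the conjugation identity and \eqref{(iii)} is where most signs and orders can be easily mis-stated; all other verifications are mechanical applications of \eqref{tw1}--\eqref{tw4} and the definitions of $A\#G$.
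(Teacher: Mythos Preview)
Your proposal is correct and is precisely the ``straightforward verification'' the paper alludes to: the paper gives no argument beyond the \qed, so your plan of checking that $S$ is an anti-algebra map via the first two conditions on $\upsilon$, verifying \eqref{q5} via the last two, and reducing \eqref{q6} to the corresponding axioms in $A$ (using $\upsilon_e=1$) is exactly the intended approach. The only thing to be careful about, as you already note, is the bookkeeping around $((\sigma\tau)^{-1})_*$ versus $\tau^{-1}_*\sigma^{-1}_*$, which requires one application of \eqref{(iii)} with the appropriate $\theta_{\tau^{-1},\sigma^{-1}}$ correction.
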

\subsection{Equivariantization and crossed systems}
Let us assume that $G$ is an Abelian group. In this case a $G$-crossed system over $A$ gives rise to a $G$-action on the category $\Rep(A)$. Indeed, for any $\sigma\in G$ we can define the tensor functors $(F_\sigma, \zeta_\sigma): \Rep(A)\to \Rep(A)$ described as follows. For any $V\in \Rep(A)$, $F_\sigma(V)=V$ as vector spaces and the action on $F_\sigma(V)$ is given by $a\cdot v=\sigma_*(a)v$ for all $a\in A$, $v\in V$.
For any $V, W\in \Rep(A)$ the isomorphisms $(\zeta_\sigma)_{V,W}:V\ot W\to V\ot W$ are given by $(\zeta_\sigma)_{V,W}(v\ot w)=J_\sigma\cdot (v\ot w)$ for all $v\in V$, $w\in W$.
For any $\sigma, \tau\in G$ the natural tensor transformation $\gamma_{\sigma, \tau}:F_\sigma\circ F_\tau\to F_{\sigma\tau}$, $(\gamma_{\sigma, \tau})_V(v)=\theta^{-1}_{(\sigma,\tau)}v$ for all $V\in \Rep(A)$, $v\in V$.

\begin{lema}\label{g-crossed-action} If $\theta_{(\sigma,\tau)}=\theta_{(\tau,\sigma)}$ for all $\sigma, \tau\in G$
 then the tensor functors $(F_\sigma, \zeta_\sigma)$ described above define a $G$-action on $\Rep(A)$.
\end{lema}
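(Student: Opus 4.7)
The plan is to verify, step by step, that the data $(F_\sigma, \zeta_\sigma, \gamma_{\sigma, \tau})$ satisfies the axioms of a monoidal functor $\underline{G} \to \underline{\Aut_\otimes(\Rep(A))}$. Since $(\sigma_*, J_\sigma)$ is a twisted automorphism, the general construction recalled at the beginning of Section~\ref{s:equivarin-quasi} yields directly that $(F_\sigma, \zeta_\sigma) = (\sigma_*^*, \xi^{J_\sigma})$ is a tensor functor, and it is an auto-equivalence because the algebra map $\sigma_*$ is invertible. I would then verify that $\gamma_{\sigma,\tau}^V(v) = \theta_{(\sigma,\tau)}^{-1} v$ defines a morphism $F_\sigma F_\tau(V) \to F_{\sigma\tau}(V)$ in $\Rep(A)$; this amounts to the identity $\theta_{(\sigma,\tau)}^{-1}\, \sigma_*\tau_*(a) = (\sigma\tau)_*(a)\, \theta_{(\sigma,\tau)}^{-1}$ in $A$, i.e.\ precisely equation \eqref{(iii)}. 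Naturality in $V$ is automatic.

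The substantive step, which I expect to be the main obstacle, is to check that each $\gamma_{\sigma,\tau}$ is a \emph{tensor} natural transformation. For this I would first compute, from the standard formula for the tensor structure of a composite tensor functor, that the tensor structure of $F_\sigma \circ F_\tau$ at $(V,W)$ is multiplication on $V \otimes W$ by the element $J_\tau\, (\tau_* \otimes \tau_*)(J_\sigma) \in A \otimes A$ (acting through the original $A$-actions on $V$ and $W$), while the tensor structure of $F_{\sigma\tau}$ is multiplication by $J_{\sigma\tau}$. Tensor naturality of $\gamma_{\sigma,\tau}$ therefore reduces to the identity
\[
J_\tau\, (\tau_* \otimes \tau_*)(J_\sigma)\, (\theta_{(\sigma,\tau)} \otimes \theta_{(\sigma,\tau)}) = \Delta(\theta_{(\sigma,\tau)})\, J_{\sigma\tau}
\]
in $A \otimes A$. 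Axiom \eqref{(vi)} gives the analogous equation but with $\sigma$ and $\tau$ exchanged inside the noncommutative product $J_\sigma(\sigma_* \otimes \sigma_*)(J_\tau)$, so it does not suffice on its own. The remedy is to apply \eqref{(vi)} with $\sigma$ and $\tau$ interchanged and then to invoke the hypothesis $\theta_{(\sigma,\tau)} = \theta_{(\tau,\sigma)}$ together with the commutativity of $G$ (which forces $J_{\sigma\tau} = J_{\tau\sigma}$); this produces exactly the displayed identity. This is the unique place where the two hypotheses are really used.

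To finish, the associativity coherence $\gamma_{\sigma\tau,\rho}\circ (\gamma_{\sigma,\tau} F_\rho) = \gamma_{\sigma,\tau\rho}\circ F_\sigma(\gamma_{\tau,\rho})$ unravels, after multiplying everything out, to the identity $\theta_{(\sigma,\tau)}\theta_{(\sigma\tau,\rho)} = \sigma_*(\theta_{(\tau,\rho)})\theta_{(\sigma,\tau\rho)}$ in $A$, which is precisely equation \eqref{(iv)}; the unit constraints $\gamma_{e,\sigma} = \gamma_{\sigma,e} = \id$ follow from \eqref{(v)} together with the fact that $F_e = \id_{\Rep(A)}$ carries the trivial tensor structure, which comes from \eqref{(ii)}. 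Assembling these verifications gives the desired $G$-action on $\Rep(A)$.
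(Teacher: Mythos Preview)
There is a genuine error in your computation of the $A$-module structure on $F_\sigma F_\tau(V)$. By definition, $F_\tau(V)$ carries the action $a\cdot v=\tau_*(a)v$, and then $F_\sigma$ applied to this module gives the action $a\cdot v=\sigma_*(a)\cdot_{F_\tau(V)}v=\tau_*(\sigma_*(a))v$. Thus the action on $F_\sigma F_\tau(V)$ is through $\tau_*\circ\sigma_*$, not $\sigma_*\circ\tau_*$ as you wrote. Consequently, $A$-linearity of $\gamma_{\sigma,\tau}$ amounts to
\[
\theta_{(\sigma,\tau)}^{-1}\,\tau_*\sigma_*(a)=(\sigma\tau)_*(a)\,\theta_{(\sigma,\tau)}^{-1},
\]
which is \emph{not} equation~\eqref{(iii)}; rather, \eqref{(iii)} gives $(\sigma\tau)_*(a)\,\theta_{(\sigma,\tau)}^{-1}=\theta_{(\sigma,\tau)}^{-1}\,\sigma_*\tau_*(a)$. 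To obtain the needed identity you must swap $\sigma$ and $\tau$ in~\eqref{(iii)} and then invoke both the commutativity of $G$ and the hypothesis $\theta_{(\sigma,\tau)}=\theta_{(\tau,\sigma)}$. This is exactly the point the paper singles out: the hypotheses are used to make $\gamma_{\sigma,\tau}$ an $A$-module map.

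Your analysis of tensor naturality is correct, and it is indeed a second place where the same swap (and hence the same hypotheses) is needed. So the overall architecture of your argument is fine; you simply misidentified where the hypothesis first enters, because of the reversed order in the composite action.
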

\pf The conmutativity of $G$ and equation $\theta_{(\sigma,\tau)}=\theta_{(\tau,\sigma)}$ for all $\sigma, \tau\in G$
imply that the maps $\gamma_{\sigma, \tau}$ are morphisms of $A$-modules.  The proof that  the tensor functors $(F_\sigma, \zeta_\sigma)$
define a $G$-action is straightforward.
\epf

Given  a  $G$-crossed system $(\sigma_*,\theta_{(\sigma,\tau)}, J_\sigma)_{\sigma,\tau \in G}$
 over $A$ we consider the category $\Rep(A)^{G}$ of $G$-equivariant $A$-modules.

\begin{prop}\label{equi}
Let $G$ be an Abelian group, $A$ be a quasi-Hopf algebra and $(\sigma_*,\theta_{(\sigma,\tau)}, J_\sigma)_{\sigma,\tau \in G}$ a $G$-crossed system over $A$ such that $\theta_{(\sigma,\tau)}=\theta_{(\tau,\sigma)}$ for all $\sigma,\tau\in G$. Then there is a tensor equivalence between $\Rep(A)^{G}$ and $\Rep(A\# G)$.
\end{prop}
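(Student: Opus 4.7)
The plan is to construct mutually quasi-inverse tensor functors
$$\Psi:\Rep(A\# G)\longrightarrow \Rep(A)^G,\qquad \Phi:\Rep(A)^G\longrightarrow \Rep(A\# G),$$
both of which act as the identity on underlying vector spaces and on morphisms. For $\Psi(W)$, restrict along the subalgebra inclusion $A\hookrightarrow A\# G$, $a\mapsto a\# e$, to obtain the $A$-module structure. Since $\theta_{(\sigma,e)}=\theta_{(e,\sigma)}=1$, the identity $(1\# \sigma)(a\# e)=(\sigma_*(a)\# e)(1\# \sigma)$ holds in $A\# G$, and combined with \eqref{(iii)} this forces the map $u_\sigma(w):=(\theta_{(\sigma^{-1},\sigma)}^{-1}\# \sigma^{-1})\cdot w$ to be $A$-linear as a morphism $F_\sigma(W)\to W$; the rescaling by $\theta_{(\sigma^{-1},\sigma)}^{-1}\in A^{\times}$ absorbs the discrepancy between $(\sigma^{-1})_*\circ \sigma_*$ and $\id_A$. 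Conversely, for $(V,\{u_\sigma\})\in \Rep(A)^G$, declare $(a\# \sigma)\cdot v := a\cdot u_\sigma^{-1}(v)$.

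The two well-definedness checks---the equivariance axiom \eqref{F-equivariant-obj} for $\Psi(W)$ and the $A\# G$-associativity for $\Phi(V,u)$---both reduce, by direct expansion, to the single identity $u_\sigma^{-1}\circ u_\tau^{-1}(v) = \theta_{(\sigma,\tau)}\cdot u_{\sigma\tau}^{-1}(v)$. For $\Phi$ one uses the $A$-linearity of $u_\sigma^{-1}:V\to F_\sigma V$, namely $u_\sigma^{-1}(bw)=\sigma_*(b)u_\sigma^{-1}(w)$, to match the product $(a\# \sigma)(b\#\tau) = a\sigma_*(b)\theta_{(\sigma,\tau)}\# \sigma\tau$. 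For $\Psi$ the same identity falls out of the product law of $A\# G$ and the cocycle axiom \eqref{(iv)}. The equivariance relation in its original form only yields $u_\tau^{-1}\circ u_\sigma^{-1}(v) = \theta_{(\sigma,\tau)} u_{\sigma\tau}^{-1}(v)$, so matching the order of the $u^{-1}$'s requires $u_\sigma u_\tau = u_\tau u_\sigma$; this commutation follows by applying equivariance in both orders and invoking abelianness of $G$ together with the symmetry hypothesis $\theta_{(\sigma,\tau)}=\theta_{(\tau,\sigma)}$---the very hypotheses that made $*$ into a $G$-action on $\Rep(A)$ in Lemma \ref{g-crossed-action}. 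The bijection on morphisms is tautological, since an $A\# G$-linear map is the same as an $A$-linear map commuting with each $u_\sigma$, and $\Psi\Phi$, $\Phi\Psi$ are directly identified with the identity.

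It remains to match the monoidal structures. The coproduct of $A\# G$ reads $\Delta(1\# \sigma) = (J_\sigma^1\# \sigma)\otimes(J_\sigma^2\# \sigma)$, so on a tensor product $W_1\otimes W_2$ the action of $1\#\sigma$ equals $J_\sigma\cdot\bigl((1\#\sigma)w_1\otimes(1\#\sigma)w_2\bigr)$; transported under $\Psi$, this is exactly the equivariant tensor-product structure $\tilde u_\sigma=(u_\sigma\otimes u_\sigma)\zeta_\sigma^{-1}$, because $\zeta_\sigma$ is by definition multiplication by $J_\sigma$. The associativity constraints coincide since the associator of $A\# G$ is $\Phi\# e$, acting on the underlying $A$-modules exactly as $\Phi$ does. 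The main obstacle is the simultaneous bookkeeping of the three pieces of structure data---the cocycle $\theta$, the twist $J_\sigma$, and the quasi-Hopf associator $\Phi$---but no conceptual input beyond a disciplined use of the crossed-system axioms \eqref{(iii)}--\eqref{(vi)} and the commutativity hypothesis on $\theta$ is required.
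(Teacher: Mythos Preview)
Your proof is correct and follows essentially the same approach as the paper's: both construct the equivalence via $(a\#\sigma)\cdot v = a\cdot u_\sigma^{-1}(v)$ in one direction and $u_\sigma(w)=(\theta_{(\sigma,\sigma^{-1})}^{-1}\#\sigma^{-1})\cdot w$ in the other. Your write-up is actually more detailed than the paper's---in particular, you make explicit the point that associativity of the $A\#G$-action requires $u_\sigma^{-1}u_\tau^{-1}=u_\tau^{-1}u_\sigma^{-1}$, which in turn uses both the abelianness of $G$ and the symmetry $\theta_{(\sigma,\tau)}=\theta_{(\tau,\sigma)}$, whereas the paper merely asserts that these hypotheses ``imply that there is a well-defined action''.
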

\begin{proof}
Let $(V,u)$ be a $G$-equivariant object. The linear isomorphisms $u_\sigma:F_\sigma(V)\to V$ satisfy
\begin{align}
 u_\sigma(\sigma_*(a)\cdot v)=a\cdot u_\sigma(v), \quad u_\sigma(u_\tau(v)) &=u_{\sigma\tau}(\theta_{(\sigma,\tau)}\cdot v)\label{1equi-V}
\end{align}
for all $v\in V$, $a\in A$, $\sigma,\tau \in G$. Equation \eqref{1equi-V} together with the fact that $\theta_{(\sigma,\tau)}=\theta_{(\tau,\sigma)}$ for all $\sigma,\tau\in G$ imply that
there is a well-defined action of the crossed product $A\# G$ on $V$ determined
by
\begin{equation}
(a\# \sigma)\cdot v =a u^{-1}_\sigma(v),
\end{equation}
for all $a\in A, v\in V, \sigma \in G$. Morphisms of $G$-equivariant representations are exactly  morphisms of  $A\# G$-modules. Hence we have defined a functor $$\mathcal{F}:\Rep(A)^G \to\Rep(A\#G),$$
which clearly is a tensor functor. Assume that $W \in \Rep(A\#G)$. Then, by
restriction, $W$ is a representation of $A$. Moreover $(W, u)$ is a $G$-equivariant object in $\Rep(A)$, letting
$$u_\sigma : W \to W,
\quad u_\sigma (w) = (\theta^{-1}_{(\sigma,\sigma^{-1})}\# \sigma^{-1})\cdot w,$$ for every $\sigma \in G$. We have
thus a functor $\mathcal{G} : \Rep(A\#G) \to \Rep(A)^G$. It is clear that $\mathcal{F}$ and $\mathcal{G}$ are inverse equivalences of categories.
\end{proof}

\begin{rmk} A version of the above result appears in \cite[Prop. 3.2]{Na}.
\end{rmk}

\subsection{Crossed product of quasi-bialgebras}
\begin{defi}\label{defi:crossedp}
Let $(A,\Phi, S,\alpha,\beta)$ be a quasi-Hopf algebra, and let $G$ be a group. We shall say that $A$ is a $G$-\emph{crossed product} if there is a decomposition $A=\bigoplus_{\sigma\in G} A_\sigma$, where:
\begin{itemize}
\item $\Phi\in A_e\otimes A_e\otimes A_e$,
  \item $A_\sigma A_\tau \subseteq A_{\sigma\tau}$ for all $\sigma,\tau \in G$,
  \item $A_\sigma$ has an invertible element for each $\sigma \in G$,
  \item $\Delta(A_\sigma)\subseteq A_\sigma\otimes A_\sigma$ for each $\sigma \in G$.

   \item  $S(A_\sigma)\subseteq A_{\sigma^{-1}}$,  for each $\sigma \in G$.
  \item $\alpha, \beta \in A_e$
\end{itemize}
\end{defi}

\begin{prop}\label{graduado implica sistema cruzado}
Every $G$-crossed product $A$ is of the form $ B\# G$ for some quasi-Hopf algebra $B$. Moreover, there exists an antipode $\upsilon: G\to B^{\times}$ such that $ B\# G$ is isomorphic to $A$ as quasi-Hopf algebras.
\end{prop}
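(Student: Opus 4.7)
The plan is to identify $B$ with the identity component $A_e$, to choose invertible representatives in each graded piece, and to use them to construct the crossed system. Concretely, since $A_\sigma A_\tau \subseteq A_{\sigma\tau}$ and $\Delta(A_e)\subseteq A_e\otimes A_e$, $S(A_e)\subseteq A_e$, together with $\Phi,\alpha,\beta\in A_e$, the restriction of the quasi-Hopf structure to $A_e$ makes $B:=A_e$ a quasi-Hopf algebra. For each $\sigma\in G$ pick an invertible $u_\sigma\in A_\sigma$ (normalising $u_e=1$); then $u_\sigma^{-1}\in A_{\sigma^{-1}}$ by the grading, and $A=\bigoplus_{\sigma}Bu_\sigma$ with $Bu_\sigma=A_\sigma$.

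Next I would read off the crossed system data from these choices. Define
\begin{equation*}
\sigma_*(b)=u_\sigma b u_\sigma^{-1},\qquad u_\sigma u_\tau=\theta_{(\sigma,\tau)}u_{\sigma\tau},\qquad \Delta(u_\sigma)=J_\sigma(u_\sigma\otimes u_\sigma),
\end{equation*}
for $b\in B$ and $\sigma,\tau\in G$. Each $\sigma_*$ sends $B$ to $B$ and is an algebra automorphism; $\theta_{(\sigma,\tau)}\in B$ because $A_\sigma A_\tau=Bu_{\sigma\tau}$; and $J_\sigma\in B\otimes B$ because $\Delta(u_\sigma)\in A_\sigma\otimes A_\sigma=(B\otimes B)(u_\sigma\otimes u_\sigma)$. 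The cocycle identities \eqref{(i)}--\eqref{(vi)} and the twist identities \eqref{tw1}--\eqref{tw4} are then forced by associativity, coassociativity up to $\Phi$, the counit axiom, and the quasi-bialgebra compatibility $\Delta(u_\sigma u_\tau)=\Delta(u_\sigma)\Delta(u_\tau)$ applied inside $A$; this is the bulk of the verification but is a routine unpacking given the grading hypotheses.

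Having produced a $G$-crossed system over $B$, I would define the candidate isomorphism
\begin{equation*}
\phi:B\# G\longrightarrow A,\qquad \phi(b\#\sigma)=b\,u_\sigma.
\end{equation*}
Multiplicativity follows from $\phi((a\#\sigma)(b\#\tau))=a\sigma_*(b)\theta_{(\sigma,\tau)}u_{\sigma\tau}=au_\sigma bu_\tau$, and comultiplicativity from the definitions of $J_\sigma$ and the formula for $\Delta$ in Proposition \ref{prop producto cruzado}; the associator and counit are preserved on the nose because $\Phi\in B^{\otimes 3}$ and $u_\sigma$ is group-like up to a normalised twist. Bijectivity is immediate from the direct sum decomposition $A=\bigoplus_\sigma Bu_\sigma$.

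Finally, to produce the antipode $\upsilon$, I would compare the prescribed formula $S(b\#\sigma)=\upsilon_\sigma(\sigma^{-1})_*(S(b))\#\sigma^{-1}$ with the pullback of $S$ through $\phi$: the requirement $\phi(S(b\#\sigma))=S(\phi(b\#\sigma))=S(u_\sigma)S(b)$ forces
\begin{equation*}
\upsilon_\sigma:=S(u_\sigma)\,u_{\sigma^{-1}}^{-1}\in A_e=B.
\end{equation*}
Then I would verify the four antipode equations of the preceding subsection by translating the axioms for $S$ on $A$ (anti-multiplicativity, the identities involving $\alpha$ and $\beta$, and the quasi-antipode relations) through $\phi$; each becomes exactly one of the displayed conditions on $\upsilon_\sigma$. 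The main obstacle I expect is the bookkeeping in the cocycle identity \eqref{(vi)} and in checking that the antipode relations for $\upsilon$ follow cleanly from the fact that $S$ is an antipode for $A$, because both involve juggling $J_\sigma$, $\theta_{(\sigma,\tau)}$, and $S(u_\sigma)$ simultaneously; however, all of this is forced, so no genuinely new identity needs to be invented.
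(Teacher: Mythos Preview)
Your argument is essentially identical to the paper's: both set $B=A_e$, choose invertible representatives $u_\sigma\in A_\sigma$, and read off $(\sigma_*,\theta_{(\sigma,\tau)},J_\sigma)$ and $\upsilon_\sigma$ from conjugation, products, $\Delta(u_\sigma)$, and $S(u_\sigma)$ exactly as you do. The only detail the paper makes explicit that you gloss over is the normalisation $\varepsilon(u_\sigma)=1$ (replace $u_\sigma$ by $\varepsilon(u_\sigma)^{-1}u_\sigma$), which is needed for \eqref{tw2} and \eqref{(i)} to hold.
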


\begin{proof}
Let $A$ be a $G$-crossed product. Set $B=A_e$. Since every $A_\sigma$ has an invertible element, we may choose for each $\sigma \in G$ some invertible element $t_\sigma\in A_\sigma$, with $t_e=1$. Then it is clear that $A_\sigma =t_\sigma A_e= A_e t_\sigma$, and the set $\{t_\sigma: \sigma\in G\}$ is a basis for $A$ as a left (and  right) $A_e$-module. Note that $\varepsilon(t_\sigma)\neq 0$, because $\varepsilon$ is an algebra map and $t_\sigma$ is invertible. Thus, we may and shall assume that $\varepsilon(t_\sigma)=1$ for each $\sigma \in G$. Let us define the maps $$\sigma_*(a)=t_\sigma at_\sigma^{-1}, \text{ for each $\sigma\ G$ and $a\in A_e$,}$$ and $$\theta:G\times G\to A \;\; \text{ by } \; \theta_{(\sigma,\tau)}=t_\sigma t_\tau t_{\sigma\tau}^{-1}\; \text{ for } \sigma, \tau \in G.$$

We have that $\Delta(t_\sigma)\in A_\sigma\otimes A_\sigma$ can be uniquely expressed as $\Delta(t_\sigma)= J_\sigma(t_\sigma\otimes t_\sigma)$, with $J_\sigma \in A_e\otimes A_e$. Since $\Delta$ is an algebra morphism, $J_\sigma$ is invertible, and for the normalization $\varepsilon(t_\sigma)=1$,  $(\varepsilon\otimes\id) (J_\sigma)= (\id\otimes \varepsilon)(J_\sigma)=1$.

Then, it is straightforward to see that the data $(\sigma_*,\theta_{(\sigma,\tau)}, J_\sigma)_{\sigma,\tau \in G}$, defines a $G$-crossed system over the sub-quasi-bialgebra  $A_e\subseteq A$, and $A_e\# G$ is isomorphic to $A$ as quasi-bialgebras.

The antipode $S: A\to A$ is anti-isomorphism of algebras, and the condition $S(A_\sigma)\subset A_{\sigma^{-1}}$ implies that there is a unique function  $\upsilon: G\to  A_e^{\times}$ such that $S(t_\sigma)=\theta_\sigma t_{\sigma^{-1}}$ for all $\sigma \in G$. Hence, it is straightforward to see that $\upsilon$ is antipode for the crossed system $(\sigma_*,\theta_{(\sigma,\tau)}, J_\sigma)_{\sigma,\tau \in G}$, and $A_e\# G$ is isomorphic to $A$ as quasi-Hopf algebras.
\end{proof}

\subsection{Twisted homomorphisms of comodule algebras}

Let $A$ be a quasi-Hopf algebra. A {\em twisted homomorphism} of left $A$-comodule algebras  $(\kc, \l, \Phi_\l)$ and $(\kc', \l', \Phi_\l')$ is pair
$(\mathfrak{f},\mathfrak{J})$ consisting of a homomorphism of algebras $\fs:\kc\to \kc'$
and an invertible element $\js\in A\otimes \kc'$ such that
\begin{equation}\label{tw-com-alg1}
 \Phi_{\lambda'}(\Delta\otimes \id)(\js)= (\id\otimes\lambda')(\js)(1\otimes J)(\id\otimes\id\otimes \fs)(\Phi_\lambda),
\end{equation}
\begin{equation}\label{tw-com-alg2}
(\va \ot \id)(\js)=1,
\end{equation}
\begin{equation}
\lambda' (\fs(a))\js = \js(\id\otimes \fs)(\lambda (a)), \quad \text{ for all } a\in \kc.
\end{equation}

A {\em morphism} between two twisted homomorphisms
$(\fs_1,\js_1),(\fs_2,\js_2):\kc\to \kc'$ is an element $c\in \kc'$ such that $c\fs_1(a) = \fs_2(a)c$
for any $a\in \kc$ and $\lambda'(c)\js_1 = \js_2(1\otimes c)$.

To any twisted homomorphism of comodule algebras $(\fs, \js):\kc\to \kc'$ there is associated  a $\Rep(A)$-module
functor
$$(\fs^* , \xi^\js):\Rep(\kc')\to \Rep(\kc),$$
where, for all $V\in \Rep(\kc_2)$, $\fs^*(V)=V$ with action given by $x\cdot v=\fs(x) v$, $x\in \kc$, $v\in V$. The natural transformation $\xi^\js$ is given by
$${\xi^\js}_{X,M}:  \fs^* (X \otimes M)\to X\otimes \fs^*(M),\quad
{\xi^\js}_{X,M}(x\otimes m) = \js^{-1}\cdot(x\otimes m),$$
for any $X \in \Rep(A), M\in \Rep(\kc_2)$, $x\in X$, $m\in M$.
Morphisms between twisted homomorphisms $\fs,\fs':\kc\to \kc'$
of $A$-comodule algebras correspond to module natural transformations
between the module functors.

\medbreak

Let $A$ be a quasi-Hopf algebra and $(\kc, \l, \Phi_\l)$ be a left $A$-comodule algebra. For each twisted endomorphism $(f,J):A\to A$, we define a new left $A$-comodule algebra $(\kc^f, \l^{f}, \Phi_\l^{f})$, where $\kc^f = \kc$ as algebras and
\begin{align*}
    \l^{f}(x)=(f\otimes\id)\l(x),\quad
    \Phi_\l^f = (f\otimes f\otimes\id) (\Phi_\l)(J^{-1}\otimes 1),
\end{align*}
for all $x\in \kc$.

\begin{defi}
Let $A$ be a quasi-Hopf algebra and  $(\kc, \l, \Phi_\l)$ be a left $A$-comodule algebra. Given a twisted endomorphism $(f,J)$ of $A$, a $(f,J)$-\emph{twisted endomorphism of} $\kc$  is a  twisted homomorphism from  $(\kc^f, \l^{f}, \Phi_\l^{f})$ to $(\kc, \l, \Phi_\l)$. Explicitly a $(f,J)$-twisted endomorphism is a pair $(\fs,\js)$ consisting of an algebra endomorphism $\fs:\kc \to \kc$ and an invertible element $\js\in A\otimes \kc$, such that:
\begin{equation}\label{twisted-end-comodalg1}(\varepsilon\otimes \id)(\js)=1,
   \end{equation}
   \begin{equation}\label{twisted-end-comodalg2}\Phi_\l(\Delta\otimes \id)(\js)(J\otimes 1)= (\id\otimes \l)(\js)(1\otimes \js)(f\otimes f\otimes \fs)(\Phi_\l)
\end{equation}
\begin{equation}\label{twisted-end-comodalg3}\l(\fs(x))\js=\js(f\otimes \fs)(\l(x)),\quad \text{ for all } x\in \kc.
\end{equation}
\end{defi}

\begin{lema}\label{lema composicion}
Let $(\sigma_*, \theta_{(\sigma,\tau)}, J_\sigma)_{\sigma,\tau \in G}$ be a crossed system over a quasi-Hopf algebra $A$, and $(\kc,\lambda,\Phi_\l)$ a left $A$-comodule algebra. If $(\fs_\sigma, \js_\sigma), (\fs_\tau,\js_{\tau}): \kc\to \kc$ are $(\sigma_*,J_\sigma)$-twisted and $(\tau_*,J_\tau)$-twisted endomorphism, then $$(\fs_\sigma, \js_\sigma)\overline{\circ}(\fs_\tau,\js_{\tau})=(\fs_\sigma\circ \fs_\tau ,\js_\sigma (\sigma_*\otimes \fs_\sigma)(\js_\tau)(\theta_{\sigma,\tau}\otimes 1 ))$$ is a  $((\sigma\tau)_*,J_{\sigma\tau})$-twisted endomorphism. Moreover, this composition is associative, \textit{i.e.}, if $(\fs_\sigma, \js_\sigma), (\fs_\tau,\js_{\tau}), (\fs_\rho,\js_{\rho}): \kc\to \kc$ are $(\sigma_*,J_\sigma)$-twisted, $(\tau_*,J_\tau)$-twisted, and $(\rho_*,J_\rho)$-twisted endomorphism, then $$[(\fs_\sigma, \js_\sigma)\overline{\circ}(\fs_\tau,\js_{\tau})]\overline{\circ} (\fs_\rho,\js_{\rho}) = (\fs_\sigma, \js_\sigma)\overline{\circ}[(\fs_\tau,\js_{\tau})\overline{\circ} (\fs_\rho,\js_{\rho})]$$
\end{lema}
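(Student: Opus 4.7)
My plan is to verify the three axioms \eqref{twisted-end-comodalg1}--\eqref{twisted-end-comodalg3} for the pair $(\fs_\sigma\circ\fs_\tau,\js_\circ)$ with $\js_\circ=\js_\sigma(\sigma_*\otimes\fs_\sigma)(\js_\tau)(\theta_{\sigma,\tau}\otimes 1)$, and then to check associativity of $\overline{\circ}$ by direct expansion. The counit condition \eqref{twisted-end-comodalg1} is immediate: applying $\varepsilon\otimes\id$ to $\js_\circ$ and using $\varepsilon\circ\sigma_*=\varepsilon$ (from \eqref{tw3}), $\varepsilon(\theta_{\sigma,\tau})=1$ (from \eqref{(i)}), together with the counit conditions for $\js_\sigma$ and $\js_\tau$, produces $\fs_\sigma(1)=1$.

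For the coaction compatibility \eqref{twisted-end-comodalg3} I would start from $\lambda(\fs_\sigma\fs_\tau(x))\,\js_\circ$, apply \eqref{twisted-end-comodalg3} for $(\fs_\sigma,\js_\sigma)$ to move $\js_\sigma$ to the left and replace the front factor by $(\sigma_*\otimes\fs_\sigma)(\lambda(\fs_\tau(x)))$, then apply \eqref{twisted-end-comodalg3} for $(\fs_\tau,\js_\tau)$ inside the functor $\sigma_*\otimes\fs_\sigma$. The resulting expression differs from the desired right-hand side only by the need to commute $\theta_{\sigma,\tau}$ past $(\sigma_*\tau_*\otimes\fs_\sigma\fs_\tau)(\lambda(x))$, which is precisely the intertwining relation \eqref{(iii)} converting $\sigma_*\tau_*$ into $(\sigma\tau)_*$ on the nose.

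The main obstacle is \eqref{twisted-end-comodalg2}, which is a genuine cocycle computation. My plan is to expand the left-hand side $\Phi_\lambda(\Delta\otimes\id)(\js_\circ)(J_{\sigma\tau}\otimes 1)$ and rewrite the rightmost block $\Delta(\theta_{\sigma,\tau})J_{\sigma\tau}$ using \eqref{(vi)} as $J_\sigma(\sigma_*\otimes\sigma_*)(J_\tau)(\theta_{\sigma,\tau}\otimes\theta_{\sigma,\tau})$. To push the remaining $\Delta$ through the middle factor $(\sigma_*\otimes\fs_\sigma)(\js_\tau)$, I would use \eqref{tw4} in the form $\Delta\circ\sigma_*=J_\sigma(\sigma_*\otimes\sigma_*)\Delta(\cdot)J_\sigma^{-1}$, after which the emerging $\Phi_\lambda(\Delta\otimes\id)(\js_\sigma)(J_\sigma\otimes 1)$ is absorbed by \eqref{twisted-end-comodalg2} for $(\fs_\sigma,\js_\sigma)$, and a further $(\sigma_*\otimes\sigma_*\otimes\fs_\sigma)$-image of \eqref{twisted-end-comodalg2} for $(\fs_\tau,\js_\tau)$ is absorbed next. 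Finally, the two trailing copies of $\theta_{\sigma,\tau}$ are pushed to the right through $(\sigma_*\tau_*\otimes\sigma_*\tau_*\otimes\fs_\sigma\fs_\tau)(\Phi_\lambda)$ using \eqref{(iii)} in each of the first two tensor legs, producing $((\sigma\tau)_*\otimes(\sigma\tau)_*\otimes\fs_\sigma\fs_\tau)(\Phi_\lambda)$ on the right and leaving exactly the factor $(\id\otimes\lambda)(\js_\circ)(1\otimes\js_\circ)$ in front, which is the right-hand side.

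For associativity, the equality of the endomorphism parts is trivial, so only the twist parts need to be matched. Expanding both bracketings yields the common prefix $\js_\sigma(\sigma_*\otimes\fs_\sigma)(\js_\tau)(\sigma_*\tau_*\otimes\fs_\sigma\fs_\tau)(\js_\rho)$ once one uses \eqref{(iii)} to slide the interior $\theta_{\sigma,\tau}$ through $((\sigma\tau)_*\otimes\fs_\sigma\fs_\tau)(\js_\rho)$, and the residual scalar factors agree iff $\sigma_*(\theta_{\tau,\rho})\theta_{\sigma,\tau\rho}=\theta_{\sigma,\tau}\theta_{\sigma\tau,\rho}$, which is precisely the cocycle relation \eqref{(iv)}. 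The only genuinely lengthy step is the verification of \eqref{twisted-end-comodalg2}; everything else is bookkeeping with \eqref{(iii)}, \eqref{(iv)}, and \eqref{(vi)}.
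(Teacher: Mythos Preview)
Your plan is correct and follows essentially the same route as the paper's proof: the counit axiom via \eqref{tw3} and \eqref{(i)}, the coaction axiom \eqref{twisted-end-comodalg3} by iterating it for $\fs_\sigma$ and $\fs_\tau$ and then invoking \eqref{(iii)}, the associator axiom \eqref{twisted-end-comodalg2} via \eqref{(vi)}, \eqref{tw4}, two applications of \eqref{twisted-end-comodalg2}, and \eqref{(iii)}, and finally associativity from \eqref{(iii)} and \eqref{(iv)}. The only step you leave implicit is that, after the two absorptions of \eqref{twisted-end-comodalg2}, one must still commute $(1\otimes\js_\sigma)$ past $(\sigma_*\otimes\sigma_*\otimes\fs_\sigma)\big((\id\otimes\lambda)(\js_\tau)\big)$ to assemble $(\id\otimes\lambda)(\js_\circ)(1\otimes\js_\circ)$; this commutation is exactly \eqref{twisted-end-comodalg3} for $(\fs_\sigma,\js_\sigma)$ applied to the second tensor leg, and the paper records it separately as an auxiliary identity.
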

\begin{proof}

If we use the following notation  \[\js_\sigma\overline{\circ}\ \js_\tau= \js_\sigma (\sigma_*\otimes \fs_\sigma)(\js_\tau)(\theta_{\sigma,\tau}\otimes 1 ),\] thus we need to prove:
\begin{enumerate}
  \item $(\varepsilon\otimes \id)(\js_\sigma\overline{\circ}\ \js_\tau)=1$,
  \item $\Phi_\l(\Delta\otimes \id)(\js_\sigma\overline{\circ}\ \js_\tau)(J_{\sigma\tau}\otimes 1)= (\id\otimes \l)(\js_\sigma\overline{\circ}\ \js_\tau)(1\otimes \js_\sigma\overline{\circ}\ \js_\tau)((\sigma\tau)_*\otimes (\sigma\tau)_*\otimes \fs_\sigma\circ \fs_\tau)(\Phi_\l)$,
  \item $\l(\fs_\sigma\circ \fs_\tau(x))\js_\sigma\overline{\circ}\ \js_\tau=\js_\sigma\overline{\circ}\ \js_\tau(f\otimes \fs)(\l(x))$ for all $x\in \kc$.
\end{enumerate}

(1)\ The first equation follows immediately using   $\varepsilon\otimes \id (\js_\sigma)=1$, and $\varepsilon $  is an algebra morphism that commutes with $\sigma_*$ for all $\sigma \in G$.

(2)\ For the second equation, first we shall see some equalities:
\begin{gather}
(\Delta\otimes \id) (\sigma_*\otimes \fs_\sigma)(\js_\tau)(J_\sigma\otimes 1)
=(J_\sigma\otimes 1)(\sigma_*\otimes\sigma_*\otimes \fs_\sigma)(\Delta\otimes \id)(\js_\tau)\label{eq 1}\\
(1\otimes\js_\sigma)(\sigma_*\otimes\sigma_*\otimes \fs_\sigma)[(\id\otimes \lambda )(\js_\tau)]= (\id\otimes\lambda) (\sigma_*\otimes \fs_\sigma)(\js_\tau)(1\otimes \js_\sigma)\label{eq 2}
\end{gather}

The equation \eqref{eq 1} follows by axiom \eqref{tw4} of
$J_{\sigma\tau}$, and the equation \eqref{eq 2} follows by axiom
\eqref{twisted-end-comodalg3} of $\js_\sigma$.
\begin{multline}\label{eq 3}
\Phi_\l(\Delta\otimes \id)(\js_\sigma (\sigma_*\otimes \fs_\sigma)(\js_\tau))(J_\sigma\otimes 1)\\
{\tiny\eqref{eq 1}}\  =\Phi_\l(\Delta\otimes \id)(\js_\sigma)(\Delta\otimes \id) (\sigma_*\otimes \fs_\sigma)(\js_\tau)(J_\sigma\otimes 1)\\
{\tiny\eqref{twisted-end-comodalg2}} =\Phi_\l(\Delta\otimes \id)(\js_\sigma)(J_\sigma\otimes 1)(\sigma_*\otimes\sigma_*\otimes \fs_\sigma)(\Delta\otimes \id)(\js_\tau)\\
=(\id\otimes \l)(\js_\sigma)(1\otimes \js_\sigma)(\sigma_*\otimes \sigma_*\otimes \fs_\sigma)(\Phi_\l)(\sigma_*\otimes\sigma_*\otimes \fs_\sigma)(\Delta\otimes \id)(\js_\tau)\\
=(\id\otimes \l)(\js_\sigma)(1\otimes \js_\sigma)(\sigma_*\otimes \sigma_*\otimes \fs_\sigma)[\Phi_\l(\Delta\otimes \id)(\js_\tau)]
\end{multline}

\begin{multline}\label{eq 4}
    \Phi_\l(\Delta\otimes \id)(\js_\sigma (\sigma_*\otimes \fs_\sigma)(\js_\tau))[J_\sigma (\sigma_* \otimes \sigma_*)(J_\tau)\otimes 1]\\
   {\tiny\eqref{eq 3}} = (\id\otimes \l)(\js_\sigma)(1\otimes \js_\sigma)(\sigma_*\otimes \sigma_*\otimes \fs_\sigma)[\Phi_\l(\Delta\otimes \id)(\js_\tau)]((\sigma_* \otimes \sigma_*)(J_\tau))\otimes 1\\
    = (\id\otimes \l)(\js_\sigma)(1\otimes \js_\sigma)(\sigma_*\otimes \sigma_*\otimes \fs_\sigma)[\Phi_\l(\Delta\otimes \id)(\js_\tau)(J_\tau\otimes 1)]\\
    {\tiny\eqref{twisted-end-comodalg2}} = (\id\otimes \l)(\js_\sigma)(1\otimes \js_\sigma)(\sigma_*\otimes \sigma_*\otimes \fs_\sigma)[(\id\otimes \l)(\js_\tau)(1\otimes \js_\tau)(\tau_*\otimes \tau_*\otimes \fs_\tau)(\Phi_\l)]\\
    = (\id\otimes \l)(\js_\sigma)(1\otimes \js_\sigma)(\sigma_*\otimes \sigma_*\otimes \fs_\sigma)[(\id\otimes \l)(\js_\tau)(1\otimes \js_\tau)](\sigma_*\tau_*\otimes \sigma_*\tau_*\otimes \fs_\sigma\fs_\tau)(\Phi_\l)\\
    = (\id\otimes \l)(\js_\sigma)(1\otimes \js_\sigma)(\sigma_*\otimes \sigma_*\otimes \fs_\sigma)[(\id\otimes \l)(\js_\tau)] \\
    \times [1\otimes(\sigma_*\otimes \fs_\sigma) (\js_\tau)](\sigma_*\tau_*\otimes \sigma_*\tau_*\otimes \fs_\sigma\fs_\tau)(\Phi_\l)\\
   {\tiny \eqref{eq 2}}   = (\id\otimes \l)(\js_\sigma)(\id\otimes\lambda) (\sigma_*\otimes \fs_\sigma)(\js_\tau)(1\otimes \js_\sigma) \\
    \times [1\otimes(\sigma_*\otimes \fs_\sigma) (\js_\tau)](\sigma_*\tau_*\otimes \sigma_*\tau_*\otimes \fs_\sigma\fs_\tau)(\Phi_\l)\\
    = (\id\otimes \l)[(\js_\sigma)(\sigma_*\otimes \fs_\sigma)(\js_\tau)](1\otimes \js_\sigma) \\
    \times [1\otimes(\sigma_*\otimes \fs_\sigma) (\js_\tau)](\sigma_*\tau_*\otimes \sigma_*\tau_*\otimes \fs_\sigma\fs_\tau)(\Phi_\l)\\
     = (\id\otimes \l)(\fs_\sigma\overline{\circ}\ \fs_\tau)(1\otimes \fs_\sigma\overline{\circ}\ \fs_\tau)(\theta_{(\sigma,\tau)}^{-1}\otimes\theta_{(\sigma,\tau)}^{-1}\otimes 1)(\sigma_*\tau_*\otimes \sigma_*\tau_*\otimes \fs_\sigma\fs_\tau)(\Phi_\l)\\
     {\tiny\eqref{(iii)}} = (\id\otimes \l)(\js_\sigma\overline{\circ}\ \js_\tau)(1\otimes \js_\sigma\overline{\circ}\ \js_\tau)((\sigma\tau)_*\otimes (\sigma\tau)_*\otimes \fs_\sigma\fs_\tau)(\Phi_\l)(\theta_{(\sigma,\tau)}^{-1}\otimes\theta_{(\sigma,\tau)}^{-1}\otimes 1)
\end{multline}

%First equality follows by equation \eqref{eq 3}, second because $\sigma_*\otimes\sigma_*\otimes \fs_\sigma$ is an algebra morphism, third by axiom \eqref{twisted-end-comodalg2} for $\js_\tau$, fourth and fifth  follow because $\sigma_*\otimes\sigma_*\otimes \fs_\sigma$ is an algebra morphism, sixth follows by equation \eqref{eq 2}, seventh  follows because $\id\otimes\lambda$ is an algebra morphism, eighth follows by definition of $\fs_\sigma\overline{\circ}\ \fs_\tau$, ninth follows by axiom \eqref{(iii)} for $\theta_{\sigma,\tau}$.

\begin{multline*}
\Phi_\l(\Delta\otimes \id)(\js_\sigma\overline{\circ}\ \js_\tau )(J_{\sigma\tau}\otimes 1)\\
   = \Phi_\l(\Delta\otimes \id)(\js_\sigma (\sigma_*\otimes \fs_\sigma)(\js_\tau)(\theta_{\sigma,\tau}\otimes 1 ))(J_{\sigma\tau}\otimes 1) \\
    = \Phi_\l(\Delta\otimes \id)(\js_\sigma (\sigma_*\otimes \fs_\sigma)(\js_\tau))(\Delta(\theta_{\sigma,\tau})J_{\sigma\tau}\otimes 1 )\\
   {\tiny\eqref{(vi)}} = \Phi_\l(\Delta\otimes \id)(\js_\sigma (\sigma_*\otimes \fs_\sigma)(\js_\tau))[(J_\sigma (\sigma_* \otimes \sigma_*)(J_\tau)(\theta_{(\sigma,\tau)}\otimes\theta_{(\sigma,\tau)} ))\otimes 1]\\
    = \Phi_\l(\Delta\otimes \id)(\js_\sigma (\sigma_*\otimes \fs_\sigma)(\js_\tau))[J_\sigma (\sigma_* \otimes \sigma_*)(J_\tau)\otimes 1]\\ \times [\theta_{(\sigma,\tau)}\otimes\theta_{(\sigma,\tau)} \otimes 1]
    \\
     {\tiny \eqref{eq 4}} = (\id\otimes \l)(\js_\sigma\overline{\circ}\ \js_\tau)(1\otimes \fs_\sigma\overline{\circ}\ \fs_\tau)((\sigma\tau)_*\otimes (\sigma\tau)_*\otimes \fs_\sigma\fs_\tau)(\Phi_\l)(\theta^{-1}\otimes\theta^{-1}\otimes 1)\\ \times [\theta_{(\sigma,\tau)}\otimes\theta_{(\sigma,\tau)} \otimes 1]\\
    = (\id\otimes \l)(\js_\sigma\overline{\circ}\ \js_\tau)(1\otimes \fs_\sigma\overline{\circ}\ \fs_\tau)((\sigma\tau)_*\otimes (\sigma\tau)_*\otimes \fs_\sigma\fs_\tau)(\Phi_\l)
\end{multline*}

%Third equality follows by axiom \eqref{(vi)} for $J_{\sigma\tau}$, fifth follows by equation \eqref{eq 4}.
The proof of the second equation is over.

(3)\ Now we shall prove the third equation:

\begin{multline*}
    \l(\fs_\sigma\circ \fs_\tau(x))\js_\sigma\overline{\circ}\ \js_\tau  \\ =\l(\fs_\sigma\circ \fs_\tau(x))\js_\sigma (\sigma_*\otimes \fs_\sigma)(\js_\tau)(\theta_{\sigma,\tau}\otimes 1 )\\
    {\tiny\eqref{twisted-end-comodalg3}}=\js_\sigma (\sigma_*\otimes \fs_\sigma)\l(\fs_\tau(x)) (\sigma_*\otimes \fs_\sigma)(\js_\tau)(\theta_{\sigma,\tau}\otimes 1 )\\
    =\js_\sigma (\sigma_*\otimes \fs_\sigma)[\l(\fs_\tau(x)) \js_\tau](\theta_{\sigma,\tau}\otimes 1 )\\
   {\tiny\eqref{twisted-end-comodalg3}} =\js_\sigma (\sigma_*\otimes \fs_\sigma)[\js_\tau(\tau_*\otimes\fs_\tau)\l(x) ](\theta_{\sigma,\tau}\otimes 1 )\\
    =\js_\sigma (\sigma_*\otimes \fs_\sigma)(\js_\tau) (\sigma_*\tau_*\otimes\fs_\sigma\fs_\tau)(\l(x))(\theta_{\sigma,\tau}\otimes 1 )\\
   {\tiny\eqref{(iii)}} =\js_\sigma (\sigma_*\otimes \fs_\sigma)(\js_\tau) (\theta_{\sigma,\tau}\otimes 1 )((\sigma\tau)_*\otimes\fs_\sigma\fs_\tau)(\l(x)) \\
    =(\js_\sigma\overline{\circ}\ \js_\tau)((\sigma\tau)_*\otimes\fs_\sigma\fs_\tau)\l(x) ]
\end{multline*}

Finally, we shall prove the associativity of $\overline{\circ} $,

\begin{multline*}
    [\js_\sigma \overline{\circ}\ \js_{\tau}]\overline{\circ}\ \js_{\rho}=
    [\js_\sigma (\sigma_*\otimes \fs_\sigma)(\js_\tau)(\theta_{\sigma,\tau}\otimes 1 )]\overline{\circ}\ \js_{\rho}\\
    =\js_\sigma (\sigma_*\otimes \fs_\sigma)(\js_\tau)(\theta_{\sigma,\tau}\otimes 1 )((\sigma\tau)_*\otimes (\fs_\sigma\circ\fs_\tau))(\js_\rho)(\theta_{\sigma\tau,\rho}\otimes 1)\\
    {\tiny\eqref{(iii)}}=\js_\sigma (\sigma_*\otimes \fs_\sigma)(\js_\tau)(\sigma_*\tau_*\otimes (\fs_\sigma\circ\fs_\tau))(\js_\rho) (\theta_{\sigma,\tau}\theta_{\sigma\tau,\rho}\otimes 1)\\
    {\tiny\eqref{(iv)}}=\js_\sigma (\sigma_*\otimes \fs_\sigma)(\js_\tau)(\sigma_*\tau_*\otimes (\fs_\sigma\circ\fs_\tau))(\js_\rho) (\sigma_*(\theta_{\tau,\rho})\theta_{\sigma,\tau\rho}\otimes 1)\\
    =\js_\sigma (\sigma_*\otimes \fs_\sigma)[\js_\tau (\tau_*\otimes \fs_\tau)(\js_\rho)(\theta_{\tau,\rho}\otimes 1)](\theta_{\sigma,\tau\rho}\otimes 1)\\
    =\js_\sigma (\sigma_*\otimes \fs_\sigma)[\js_{\tau}\overline{\circ}\ \js_{\rho}](\theta_{\sigma,\tau\rho}\otimes 1) = \js_\sigma \overline{\circ}\ [\js_{\tau}\overline{\circ}\ \js_{\rho}].
\end{multline*}\end{proof}

\subsection{Crossed system of comodule algebras}

Let $A$ be a quasi-Hopf algebra $(\kc, \l, \Phi_\l)$ be a left $A$-comodule algebra and $(\sigma_*, \theta_{(\sigma,\tau)}, J_\sigma)_{\sigma,\tau \in G}$ a $G$-crossed system over $A$.

We define the monoidal category $\underline{\Aut}^{\Tw}_G(\kc)$ of twisted automorphisms as follows. Objects in $\underline{\Aut}^{\Tw}_G(\kc)$ are  $(\sigma_*,J_\sigma)$-twisted automorphisms of $\kc$ for $\sigma \in G$, the set of arrows are the isomorphisms of twisted homomorphisms of $A$-comodule algebras, the tensor product of object is defined by the composition explained in Lemma  \ref{lema composicion}. The unity object is the $\id_\kc$, and tensor product of arrows is as in $\underline{\Aut}^{\Tw}(A)$.

\medbreak

Let $F\subset G$ be a subgroup. An $F$-\emph{crossed system for} a left $A$-comodule algebra $\kc$, compatible with the $G$-crossed system $(\sigma_*, \theta_{(\sigma,\tau)}, J_\sigma)_{\sigma,\tau \in G}$ is a monoidal functor $\overline{(\ )}: \underline{F} \to \underline{\Aut}^{\Tw}_G(\kc)$, that is, an $F$-crossed system consists of the following data:
\begin{itemize}
  \item A $(\sigma_*, J_\sigma)$-twisted automorphism $(\overline{\sigma},\overline{J_\sigma})$ for each $\sigma\in F$,
  \item an element $\overline{\theta_{(\sigma,\tau)}}\in \kc^{\times}$ for each $\sigma, \tau \in F$,
\end{itemize} such that
\begin{align}
\label{cross-com1}(\overline{1},\overline{J_1})&=(\id, 1\ot 1),\\
 \label{cross-com2}\overline{\theta}_{(\sigma,\tau)}\overline{(\sigma\tau)}(k)&= \overline{\sigma}(\overline{\tau}(k))\overline{\theta}_{(\sigma,\tau)},\\
  \label{cross-com3}\overline{\theta}_{(\sigma,\tau)}\overline{\theta}_{(\sigma\tau,\rho)} &= \overline{\sigma}(\overline{\theta}_{(\tau,\rho)}) \overline{\theta}_{\sigma,\tau\rho},\\
  \label{cross-com4}\overline{\theta}_{(1,\sigma)}&=\overline{\theta}_{(\sigma,1)}=1,\\
\label{cross-com5}\l(\overline{\theta}_{(\sigma,\tau)})\overline{J}_{\sigma\tau} &= \overline{J}_\sigma \big( (\sigma_*\otimes \overline{\sigma})(\overline{J}_\tau)\big) \theta_{(\sigma,\tau)}\otimes \overline{\theta}_{(\sigma,\tau)},
\end{align}
for all $k\in \kc$, $\sigma,\tau, \rho \in F$. Let $\kc\# F$ be the vector space $\kc\otk \ku F$ with product and coaction given by \begin{align}\label{prod-and-copr}(x\#\sigma)(y\# \tau)= x\overline{\sigma}(y)\overline{\theta}_{(\sigma,\tau)}\#\sigma\tau, \quad\delta(x\#\sigma)= x\_{-1}\overline{J}^1_\sigma\# \sigma\otimes x\_{0}\overline{J}^2_\sigma\# \sigma,\end{align}
for all $x, y\in\kc,$ $\sigma, \tau\in F$.
\begin{prop}
The foregoing operations make the space $\kc\# F$ into a left $A\#G$-comodule algebra with associator $\Phi_\delta=\Phi^1_\l\# 1\otimes \Phi^2_\l\# 1\otimes \Phi^3_\l\# 1$.\qed
\end{prop}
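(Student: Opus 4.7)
The plan is to establish the proposition by following the pattern of Proposition \ref{prop producto cruzado}, in three stages: (i) verify that $\kc\# F$ is an associative unital algebra; (ii) verify that $\delta$ is a well-defined algebra map from $\kc\# F$ to $(A\# G)\otimes(\kc\# F)$; and (iii) verify that $\Phi_\delta$ satisfies the comodule algebra axioms \eqref{comod-alg1}--\eqref{comod-alg3}.

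For stage (i), the fact that $1\# e$ is a two-sided unit is immediate from \eqref{cross-com4}. For associativity, expanding both $((x\#\sigma)(y\#\tau))(z\#\rho)$ and $(x\#\sigma)((y\#\tau)(z\#\rho))$ via \eqref{prod-and-copr} and cancelling the common prefix $x\overline{\sigma}(y)$ reduces the claim to the identity
\[
\overline{\theta}_{(\sigma,\tau)}\,\overline{(\sigma\tau)}(z)\,\overline{\theta}_{(\sigma\tau,\rho)} \;=\; \overline{\sigma}(\overline{\tau}(z))\,\overline{\sigma}(\overline{\theta}_{(\tau,\rho)})\,\overline{\theta}_{(\sigma,\tau\rho)}.
\]
Using \eqref{cross-com2} to commute $\overline{\theta}_{(\sigma,\tau)}$ past $\overline{(\sigma\tau)}(z)$ the LHS becomes $\overline{\sigma}(\overline{\tau}(z))\,\overline{\theta}_{(\sigma,\tau)}\overline{\theta}_{(\sigma\tau,\rho)}$, and the required equality then follows from the cocycle relation \eqref{cross-com3}. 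This mirrors exactly the associativity computation for $A\# G$ in Proposition \ref{prop producto cruzado}.

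Stage (ii) is the principal technical step and I expect it to be the main obstacle. One must verify
\[
\delta((x\#\sigma)(y\#\tau)) \;=\; \delta(x\#\sigma)\,\delta(y\#\tau)
\]
in $(A\# G)\otimes(\kc\# F)$, where the product on the right combines the two crossed product multiplications componentwise. Expanding the left-hand side through \eqref{prod-and-copr} introduces a factor $\l(\overline{\theta}_{(\sigma,\tau)})\overline{J}_{\sigma\tau}$, which by \eqref{cross-com5} is replaced by $\overline{J}_\sigma(\sigma_*\otimes\overline{\sigma})(\overline{J}_\tau)(\theta_{(\sigma,\tau)}\otimes\overline{\theta}_{(\sigma,\tau)})$. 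Applying the twisted endomorphism axiom \eqref{twisted-end-comodalg3} for $(\overline{\sigma},\overline{J}_\sigma)$ converts the term $\l\circ\overline{\sigma}$ appearing in $\l(x\overline{\sigma}(y)\overline{\theta}_{(\sigma,\tau)})$ into $(\sigma_*\otimes\overline{\sigma})\circ\l$ modulo conjugation by $\overline{J}_\sigma$, and the $G$-crossed system axiom \eqref{(iii)} handles the transport of $\theta_{(\sigma,\tau)}$ across $(\sigma\tau)_*$. The resulting manipulation parallels step (2) of the proof of Lemma \ref{lema composicion}, and indeed may be regarded as upgrading that composition formula for $\overline{\circ}$ to comultiplicativity in $(A\# G)\otimes(\kc\# F)$.

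For stage (iii), observe that $\Phi_\delta$ is supported in the subspace of $(A\# G)^{\otimes 2}\otimes(\kc\# F)$ consisting of tensors of the form $a_1\# e\otimes a_2\# e\otimes k\# e$, on which the crossed product structures restrict to the original ones (by \eqref{(ii)}, \eqref{(v)}, \eqref{cross-com1} and \eqref{cross-com4}), and on which the coproduct of $A\# G$ restricts to $\Delta$ because $J_e=1\otimes 1$. Under this identification the counit axiom \eqref{comod-alg2} and the cocycle axiom \eqref{comod-alg1} for $\Phi_\delta$ become their counterparts for $\Phi_\l$. Finally, the intertwining relation \eqref{comod-alg3} applied to $\delta(x\#\sigma)$ is checked by expanding both sides and splitting the tensor factor $x_{(-1)}\overline{J}_\sigma^1\#\sigma\otimes x_{(0)}\overline{J}_\sigma^2\#\sigma$ into its $A\# G$- and $\kc\# F$-components: the twist $J_\sigma$ contributes through the coproduct on $A\# G$, the element $\overline{J}_\sigma$ contributes through $\delta$, and after collecting terms the identity reduces to \eqref{comod-alg3} for the original $\Phi_\l$ together with the twisted endomorphism axiom \eqref{twisted-end-comodalg2}.
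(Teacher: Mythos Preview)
Your proposal is correct. The paper in fact omits the proof entirely (the proposition carries only a \qed), presumably because the verification is a direct analogue of Proposition~\ref{prop producto cruzado}; your three-stage outline is precisely that analogue, with \eqref{cross-com2}--\eqref{cross-com5} and \eqref{twisted-end-comodalg2}--\eqref{twisted-end-comodalg3} playing the roles of \eqref{(iii)}--\eqref{(vi)} and \eqref{tw1}--\eqref{tw4} respectively.
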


\begin{defi}\label{defi:crossed-p-comodalg}
Let $G$ be a group, and  $F\subseteq G$ be a subgroup. Let $A$ be a $G$-crossed product quasi-bialgebra, and  let $(\kc,\lambda, \Phi_\lambda)$ be a left $A$-comodule algebra. We shall say that $\kc$ is an $F$-\emph{crossed product}, if there is a decomposition $\kc= \bigoplus_{\sigma\in F} \kc_\sigma$, such that
\begin{itemize}
\item $\Phi_\lambda \in A_e\otimes A_e\otimes \kc_e$,
  \item $\kc_\sigma \kc_\tau \subseteq \kc_{\sigma\tau}$ for all $\sigma,\tau \in F$,
  \item $\kc_\sigma$ has an invertible element for each $\sigma \in F$,
  \item $\lambda(\kc_\sigma)\subseteq A_\sigma\otimes \kc_\sigma$ for each $\sigma \in F$.
\end{itemize}
\end{defi}

Let $A$ be a quasi-Hopf algebra and $(\sigma_*, \theta_{(\sigma,\tau)}, J_\sigma)_{\sigma,\tau \in G}$ be a crossed system for the group $G$. We have similar results as for quasi-Hopf algebras. The proof is analogous to the proof of Proposition \ref{graduado implica sistema cruzado}.

\begin{prop}\label{cross-comod-alg}
Let $(\mathcal{L},\delta)$ be a $F$-crossed $A\#G$-comodule algebra, for a subgroup $F\subseteq G$. Then there is an $A$-comodule algebra $\kc$, and an $F$-crossed system over $\kc$ compatible with the crossed system $(\sigma_*, \theta_{(\sigma,\tau)}, F_\sigma)_{\sigma,\tau \in G}$, such that $\kc\#F$ and $\mathcal{L}$ are isomorphic $A\#G$-comodule algebras.
\end{prop}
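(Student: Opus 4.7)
The plan is to reverse engineer the crossed product construction \eqref{prod-and-copr} in exactly the same spirit as the proof of Proposition \ref{graduado implica sistema cruzado}. First I would set $\kc := \mathcal{L}_e$ and use the embedding $A \hookrightarrow A\#G$, $a\mapsto a\#e$, to read off a left $A$-comodule algebra structure on $\kc$: by the conditions in Definition \ref{defi:crossed-p-comodalg} we have $\delta(\mathcal{L}_e) \subseteq (A\#e)\otimes \mathcal{L}_e$ and $\Phi_\delta \in (A\#e)^{\otimes 2}\otimes \mathcal{L}_e$, so restricting $\delta$ and $\Phi_\delta$ gives $(\kc,\delta\vert_{\kc},\Phi_\delta)$ the desired $A$-comodule algebra structure.

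Next, for each $\sigma \in F$ I would choose an invertible element $u_\sigma \in \mathcal{L}_\sigma$, with $u_e = 1$, exactly as in Proposition \ref{graduado implica sistema cruzado}. Because $\mathcal{L}_\sigma\mathcal{L}_\tau \subseteq \mathcal{L}_{\sigma\tau}$ and the $u_\sigma$ are invertible, one has $\mathcal{L}_\sigma = \kc u_\sigma = u_\sigma \kc$, and hence the free decomposition $\mathcal{L} = \bigoplus_{\sigma\in F}\kc u_\sigma$. Using these choices I would define
\begin{align*}
\overline{\sigma}(x) &:= u_\sigma x u_\sigma^{-1}, \quad x\in \kc,\\
\overline{\theta}_{(\sigma,\tau)} &:= u_\sigma u_\tau u_{\sigma\tau}^{-1} \in \kc^{\times},
\end{align*}
and let $\overline{J}_\sigma \in A\otimes \kc$ be the unique element satisfying $\delta(u_\sigma) = \overline{J}_\sigma\,(1\#\sigma\otimes u_\sigma)$. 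That $\overline{J}_\sigma$ lies in $A\otimes\kc$ and is invertible follows from $\delta(\mathcal{L}_\sigma)\subseteq (A\#\sigma)\otimes\kc u_\sigma$ together with the invertibility of $\delta(u_\sigma)$.

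I would then verify the axioms \eqref{cross-com1}--\eqref{cross-com5} and \eqref{twisted-end-comodalg1}--\eqref{twisted-end-comodalg3} by pushing the algebraic identities among the $u_\sigma$'s through $\delta$: \eqref{cross-com2}--\eqref{cross-com4} follow from associativity in $\mathcal{L}$ applied to $u_\sigma u_\tau = \overline{\theta}_{(\sigma,\tau)}u_{\sigma\tau}$; \eqref{twisted-end-comodalg1} follows from the counit axiom for comodules; \eqref{twisted-end-comodalg3} from $\delta$ being an algebra map applied to $u_\sigma x = \overline\sigma(x) u_\sigma$; and both \eqref{cross-com5} and \eqref{twisted-end-comodalg2} come out of expanding $\delta(u_\sigma u_\tau) = \delta(u_\sigma)\delta(u_\tau)$ using the multiplication and comultiplication of $A\#G$ from Proposition \ref{prop producto cruzado}. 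Finally, the map $\varphi:\kc\#F\to\mathcal{L}$, $\varphi(x\#\sigma) := xu_\sigma$, is bijective by the free decomposition above, and the formulas \eqref{prod-and-copr} for the product and coaction of $\kc\#F$ are precisely the rewriting rules imposed by the definitions of $\overline{\sigma}$, $\overline{\theta}_{(\sigma,\tau)}$ and $\overline{J}_\sigma$, so $\varphi$ is an isomorphism of $A\#G$-comodule algebras.

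The main obstacle will be extracting the identity \eqref{cross-com5}, which mixes the cocycle $\theta_{(\sigma,\tau)}$ on $A$ with the cocycle $\overline{\theta}_{(\sigma,\tau)}$ on $\kc$ and couples the three elements $\overline{J}_\sigma$, $\overline{J}_\tau$, $\overline{J}_{\sigma\tau}$ together. It must be teased out of $\delta(u_\sigma u_\tau) = \delta(u_\sigma)\delta(u_\tau)$ by carefully untangling the twisted multiplication $(a\#\sigma)(b\#\tau) = a\sigma_*(b)\theta_{(\sigma,\tau)}\#\sigma\tau$ and the comultiplication $\Delta(x\#\sigma) = x_{(1)}J_\sigma^1\#\sigma\otimes x_{(2)}J_\sigma^2\#\sigma$, which are exactly what inject the factors $\theta_{(\sigma,\tau)}$ and $J_\sigma$ on the right-hand side of \eqref{cross-com5}. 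The rest of the verification is routine bookkeeping that parallels, line by line, the proof of Proposition \ref{graduado implica sistema cruzado}.
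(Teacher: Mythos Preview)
Your proposal is correct and follows exactly the same approach as the paper: set $\kc=\mathcal{L}_e$, choose invertibles $u_\sigma\in\mathcal{L}_\sigma$, and define $\overline{\sigma}$, $\overline{\theta}_{(\sigma,\tau)}$, $\overline{J}_\sigma$ by the same formulas. In fact you supply more detail than the paper, which dispatches the verification of the axioms with a single ``it is straightforward to see''; your identification of \eqref{cross-com5} as the step requiring the most care is accurate but still routine.
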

\begin{proof}
 Let $\mathcal{L}$ be a $F$-crossed $A\#G$-comodule algebra, for a subgroup $F\subseteq G$. Set $\kc=\mathcal{L}_e$. Since every $\mathcal{L}_\sigma$ has an invertible element, we may choose for each $\sigma \in F$ some invertible element $u_\sigma\in \mathcal{L}_\sigma$, with $u_e=1$. Then it is clear that $\mathcal{L}_\sigma =u_\sigma \mathcal{L}_e= \mathcal{L}_e u_\sigma$, and the set $\{u_\sigma: \sigma\in F\}$ is a basis for $\mathcal{L}$ as a left (and  right) $\mathcal{L}_e$-module.  Let us define the maps $$\overline{\sigma}(a)=u_\sigma au_\sigma^{-1}, \text{ for each $\sigma\ F$ and $a\in \mathcal{L}_e$,}$$ and $$\overline{\theta}:G\times G\to \mathcal{L}_e \;\; \text{ by } \; \overline{\theta}_{(\sigma,\tau)}=u_\sigma u_\tau u_{\sigma\tau}^{-1}\; \text{ for } \sigma, \tau \in F.$$

Note that $\{(1\#\sigma)\otimes u_\tau\}_{\sigma\in G,\tau\in F}$ is a basis for $A\#F\otimes\mathcal{L}$ as a left (and  right) $A\otimes\mathcal{L}_e$-module. We have that $\delta(u_\sigma)\in A\#\sigma\otimes \mathcal{L}_\sigma$ can be uniquely expressed as $\delta(u_\sigma)= \overline{J}_\sigma((1\#\sigma)\otimes u_\sigma)$, with $\overline{J}_\sigma \in A\otimes \mathcal{L}_e$, for all $\sigma\in F$.

Then, it is straightforward to see that the data $(\overline{\sigma},\overline{\theta}_{(\sigma,\tau)}, \overline{J}_\sigma)_{\sigma,\tau \in F}$, define an $F$-crossed system over the $A$-comodule algebra  $\mathcal{L}_e$, and $\mathcal{L}_e\# F$ is isomorphic to $\mathcal{L}$ as $A\#G$ comodule algebras.\end{proof}

Let $G$ be an Abelian group, $F\subseteq G$ a subgroup, $(\sigma_*,\theta_{(\sigma,\tau)}, J_\sigma)_{\sigma,\tau \in G}$ be a crossed system over a quasi-Hopf algebra $A$, and $(\overline{\sigma},\overline{\theta}_{(\sigma,\tau)}, \overline{J}_\sigma)_{\sigma,\tau \in F}$ be an $F$-crossed system for a $A$-comodule algebra $\kc$. We shall further assume that
\begin{equation}\label{sym-theta} \theta_{(\sigma,\tau)}=\theta_{(\tau,\sigma)}, \quad \overline{\theta}_{(\rho,\nu)}=\overline{\theta}_{(\nu,\rho)},  \end{equation}
for all $\sigma, \tau\in G$, $\rho,\nu\in F$. We can consider the action of $G$ on the category  $\Rep(A)$ described in Lemma \ref{g-crossed-action}.
\begin{prop}\label{equiv-mod-c} Under the above assumptions the following assertions hold.
\begin{itemize}
  \item[1.] The $\Rep(A)$-module category $ {}_\kc\Mo$ is $F$-equivariant.
  \item[2.] There is an equivalence  between $(_\kc\Mo)^F$ and $_{\kc\#F}\Mo$ as $\Rep(A)^G$-module categories.
  %\item[3.]
\end{itemize}
\end{prop}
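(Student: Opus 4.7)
For part (1) the plan is to unpack the $F$-crossed system $(\overline{\sigma},\overline{\theta}_{(\sigma,\tau)},\overline{J}_\sigma)_{\sigma,\tau\in F}$ and manufacture the $F$-equivariant structure by hand. For each $\sigma\in F$, the pair $(\overline{\sigma},\overline{J}_\sigma)$ is a $(\sigma_*,J_\sigma)$-twisted endomorphism of $\kc$, so by the general construction associating to a twisted homomorphism of comodule algebras a module functor (spelled out right after Definition of twisted homomorphisms in Section \ref{s:quasi}), one gets a module functor $(U_\sigma,c^\sigma):{}_\kc\Mo\to({}_\kc\Mo)^\sigma$ with $U_\sigma(M)=M$ equipped with the new $\kc$-action $x\cdot_\sigma m=\overline{\sigma}(x)\cdot m$, and $c^\sigma_{X,M}(y\otimes m)=\overline{J}_\sigma^{-1}\cdot(y\otimes m)$. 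Axiom \eqref{twisted-end-comodalg3} is exactly what makes $c^\sigma_{X,M}$ intertwine the $\kc$-actions of $U_\sigma(X\otb M)$ and $F_\sigma(X)\otb U_\sigma(M)$. Define next $(\mu_{\sigma,\tau})_M\colon U_\sigma U_\tau(M)\to U_{\sigma\tau}(M)$ by $m\mapsto\overline{\theta}_{(\sigma,\tau)}^{-1}\cdot m$; equation \eqref{cross-com2} shows it is $\kc$-linear, \eqref{cross-com3} gives the cocycle identity \eqref{mod-equi1}, and \eqref{cross-com5} (after rearrangement) yields the compatibility \eqref{mod-equi2} between $\mu_{\sigma,\tau}$ and the module structures $c^\sigma$, $c^\tau$, $c^{\sigma\tau}$.

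For part (2), the plan is to imitate the proof of Proposition \ref{equi}. Given an $F$-equivariant object $(M,v)\in({}_\kc\Mo)^F$, define $\mathcal{F}(M,v):=M$ with $\kc\# F$-action $(x\#\sigma)\cdot m := x\cdot v_\sigma^{-1}(m)$. Using that $F$ is abelian (as a subgroup of the abelian $G$) together with the symmetry $\overline{\theta}_{(\sigma,\tau)}=\overline{\theta}_{(\tau,\sigma)}$, the equivariance equation \eqref{F-equivariant-obj} implies that the $v_\sigma$'s commute, and then a direct calculation using the product \eqref{prod-and-copr} and axiom \eqref{cross-com2} shows associativity of the $\kc\# F$-action; this was exactly the mechanism used in Proposition \ref{equi}. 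Conversely, define $\mathcal{G}(N):=(N,v)$ where $v_\sigma(n):=(\overline{\theta}_{(\sigma,\sigma^{-1})}^{-1}\#\sigma^{-1})\cdot n$, using that $1\#\sigma$ is invertible in $\kc\# F$ with inverse $\overline{\theta}_{(\sigma,\sigma^{-1})}^{-1}\#\sigma^{-1}$ (a consequence of the cocycle identity \eqref{cross-com3} specialized to $(\sigma,\sigma^{-1},\sigma)$). Routine verifications show $\mathcal{F}\circ\mathcal{G}\simeq\id$ and $\mathcal{G}\circ\mathcal{F}\simeq\id$.

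What remains is to certify that $\mathcal{F}$ is a $\Rep(A)^G$-module functor, after identifying $\Rep(A)^G\simeq\Rep(A\# G)$ via Proposition \ref{equi}. On the source, Lemma \ref{equivariant-mod-cat} gives $(X,u)\otb(M,v)=(X\otb M,\widetilde{v})$ with $\widetilde{v}_\sigma=(u_\sigma\otimes v_\sigma)\, c^\sigma_{X,M}$, so $\widetilde{v}_\sigma^{-1}(y\otimes m)=\overline{J}_\sigma\cdot(u_\sigma^{-1}(y)\otimes v_\sigma^{-1}(m))$; applying the recipe for $\mathcal{F}$ and using $\lambda(x)=x\_{-1}\otimes x\_0$ yields
\[
(x\#\sigma)\cdot(y\otimes m)\;=\;x\_{-1}\overline{J}_\sigma^{1}\, u_\sigma^{-1}(y)\;\otimes\;x\_{0}\overline{J}_\sigma^{2}\, v_\sigma^{-1}(m).
\]
On the target, the $\Rep(A\# G)$-action on $Y\otb\mathcal{F}(M,v)$ goes through the coaction $\delta(x\#\sigma)=(x\_{-1}\overline{J}_\sigma^{1}\#\sigma)\otimes(x\_{0}\overline{J}_\sigma^{2}\#\sigma)$ from \eqref{prod-and-copr} combined with the formula $(a\#\sigma)\cdot y=a\, u_\sigma^{-1}(y)$ of Proposition \ref{equi}, producing the same expression. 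The associator compatibility is immediate because $\Phi_\delta=\Phi_\lambda^1\#1\otimes\Phi_\lambda^2\#1\otimes\Phi_\lambda^3\#1$ acts on $Y\otimes M$ through $\Phi_\lambda$, matching the associator of ${}_\kc\Mo$ on the underlying spaces.

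The main obstacle is the bookkeeping in the last paragraph: the verification that the coaction $\delta$ encodes precisely the combination of the ambient coaction $\lambda$, the module-functor data $c^\sigma$, and the equivariance data $v_\sigma$. This is where the defining axiom \eqref{cross-com5} of the $F$-crossed system, together with the symmetry \eqref{sym-theta}, is essential and should be invoked explicitly. Everything else, including the equivalence $\mathcal{F}\dashv\mathcal{G}$ itself, is a direct transcription of the argument used for Proposition \ref{equi}.
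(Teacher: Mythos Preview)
Your proposal is correct and follows essentially the same route as the paper: the same functors $U_\sigma$, structure maps $c^\sigma$ (multiplication by $\overline{J}_\sigma^{-1}$), coherence $\mu_{\sigma,\tau}$ (multiplication by $\overline{\theta}_{(\sigma,\tau)}^{-1}$), and the same equivalence $\mathcal{F}$ sending $(M,v)$ to $M$ with action $(x\#\sigma)\cdot m=x\cdot v_\sigma^{-1}(m)$, with the inverse and module-functor check handled exactly as in Proposition~\ref{equi}. Your equation references are in fact sharper than the paper's (it cites the $A$-crossed axioms \eqref{(iii)}, \eqref{(vi)} where the $\kc$-crossed analogues \eqref{cross-com2}, \eqref{cross-com3}, \eqref{cross-com5} are what is actually used); one small quibble is that your explicit computation of the $\kc\#F$-action on $X\otimes M$ already matches on the nose, so \eqref{cross-com5} is not needed at that step---it was already consumed in part~(1) to establish \eqref{mod-equi2}.
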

\pf 1. For any $\rho\in F$ define $(U_\rho, c^\rho): {}_\kc\Mo\to ( {}_\kc\Mo)^\rho$ the $\Rep(A)$-module functor given as follows. For any $M\in {}_\kc\Mo$, $U_\rho(M)=M$ as vector spaces and the action of $\kc$ is given by: $x\cdot v= \overline{\rho}(x)\cdot v$, for all $x\in\kc, v\in M$. For any $X\in \Rep(A)$, $M\in {}_\kc\Mo$ the maps $c^\rho_{X,M}:U_\rho(X\otk M)\to F_\rho(X)\otk  U_\rho(M)$ are defined by $c^\rho_{X,M}(x\ot v)=
\overline{J}^{-1}_\rho\cdot (x\ot v)$, for any $x\in X, v\in M$. Equation \eqref{modfunctor1} for the pair $(U_\rho,c^\rho)$ follows from \eqref{twisted-end-comodalg2}.

For any $\sigma, \tau\in F$ define $\mu_{\sigma,\tau}:U_\sigma\circ U_\tau, \to U_{\sigma\tau}$ as follows. For any $M\in   {}_\kc\Mo$, $m\in M$
$$\mu_{\sigma,\tau}(m)=\overline{\theta}^{-1}_{(\sigma,\tau)}\cdot m. $$
It follows from equation \eqref{(iii)} that $\mu_{\sigma,\tau}$ is a morphism of $\kc$-modules. Equation \eqref{mod-equi1} follows from \eqref{(iii)} and  \eqref{mod-equi2} follows from \eqref{(vi)}.

\medbreak

2. Let $\mathcal{T}:(_\kc\Mo)^F\to {}_{\kc\#F}\Mo$ be the module functor defined as follows. If $(M,v)$ is an $F$-equivariant object then for any $\sigma\in F$ we have isomorphisms $v_\sigma:U_\sigma(M)\to M$ satisfying
$$v_{\sigma\tau}(\theta^{-1}_{(\sigma,\tau)}\cdot m)=v_\sigma(v_\tau(m), \quad v_\sigma(\overline{\sigma}(x)\cdot m)=x \cdot v_\sigma(m),$$
for all $\sigma, \tau\in F$, $x\in \kc$, $m\in M$. In this case there is a well-defined action of $\kc\#F$ on $M$ determined by
\begin{align}\label{action-equi}(x\# \sigma)\cdot m= x\cdot v^{-1}_\sigma(m),
\end{align}
for all $\sigma\in F$, $x\in \kc$, $m\in M$. We define $\mathcal{T}(M)=M$ with the above described action. If $(X,u)\in \Rep(A)^G$, $(M,v)\in (_\kc\Mo)^F$ the action of $\kc\#F$ on $X\ot M$ using the coaction given in \eqref{prod-and-copr} coincides with the action \eqref{action-equi} using the isomorphism $\widetilde{v}$ described in Lemma \ref{equivariant-mod-cat}. The proof that $\mathcal{T}$ is an equivalence is analogous to the proof of Proposition \ref{equi}.
\epf

The category of $F$-equivariant objects in a module category is always of the form ${}_{\kc\# F}\Mo$ for some left $A$-comodule algebra $\kc$ and some group $F$.

\begin{prop}\label{main-s5}
Let $A$ be a finite dimensional quasi-Hopf algebra and $G$ be a finite Abelian group and  $F\subset G$ a subgroup. Let $(\sigma_*,\theta_{(\sigma,\tau)}, J_\sigma)_{\sigma,\tau \in G}$ be a $G$-crossed system over $A$, and $\Mo$ be an exact $F$-equivariant $\Rep(A)$-module category. Then there is a left $A$-comodule algebra $(\kc,\lambda, \Phi^\lambda)$  such that ${}_{\kc}\Mo\cong \Mo$ as $\Rep(A)$-module categories and there is an $F$-crossed system compatible with $(\sigma_*,\theta_{(\sigma,\tau)}, J_\sigma)_{\sigma,\tau \in G}$ such that ${}_{\kc\# F}\Mo\simeq \Mo^F$ as $\Rep(A)^G$-module categories.
\end{prop}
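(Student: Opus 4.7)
The plan is to first produce the comodule algebra $\kc$ via Lemma \ref{exact-mod-quasi}(2), then extract an $F$-crossed system from the $F$-equivariant structure on $\Mo$, and finally invoke Proposition \ref{equiv-mod-c}(2) to conclude ${}_{\kc\#F}\Mo \simeq \Mo^F$. Applying Lemma \ref{exact-mod-quasi}(2) to the exact $\Rep(A)$-module $\Mo$, I obtain a left $A$-comodule algebra $(\kc,\lambda,\Phi^\lambda)$ and an equivalence $\Psi:\Mo\xrightarrow{\sim} {}_\kc\Mo$ of $\Rep(A)$-module categories. Conjugating the $F$-equivariant structure $(U_\sigma, c^\sigma, \mu_{\sigma,\tau})$ by $\Psi$ transports it to an $F$-equivariant structure on ${}_\kc\Mo$, which I keep denoting the same way.

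The heart of the argument is extracting the $F$-crossed system on $\kc$ from these data. Each $U_\sigma$ is in particular an abelian autoequivalence of ${}_\kc\Mo$, and since $\kc$ is a projective generator, Morita-style reconstruction supplies an algebra automorphism $\overline{\sigma}:\kc\to\kc$ and a left-$\kc$-module isomorphism $\omega_\sigma: U_\sigma(\kc)\xrightarrow{\sim}\kc$ such that the functorial right $\kc$-action on $U_\sigma(\kc)$ corresponds, via $\omega_\sigma$, to right multiplication by $\overline{\sigma}(-)$. The module-functor constraint $c^\sigma_{X,\kc}:U_\sigma(X\otb \kc)\to F_\sigma(X)\otb U_\sigma(\kc)$ composed with $\omega_\sigma$ produces an invertible element $\overline{J}_\sigma\in A\otimes \kc$, and the module-functor axioms \eqref{modfunctor1}--\eqref{modfunctor2} translate term by term into the conditions \eqref{twisted-end-comodalg1}--\eqref{twisted-end-comodalg3}, making $(\overline{\sigma},\overline{J}_\sigma)$ a $(\sigma_*,J_\sigma)$-twisted automorphism of $\kc$. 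Similarly, each $(\mu_{\sigma,\tau})_\kc$ transported through the $\omega$'s is left multiplication by an invertible $\overline{\theta}_{(\sigma,\tau)}\in\kc^\times$, and the coherence equations \eqref{mod-equi1}--\eqref{mod-equi2} together with the explicit action of $\gamma_{\sigma,\tau}$ on equivariant objects (namely $v\mapsto \theta_{(\sigma,\tau)}^{-1}v$) yield the relations \eqref{cross-com1}--\eqref{cross-com5}. With this crossed system in hand, Proposition \ref{equiv-mod-c}(2) immediately gives ${}_{\kc\#F}\Mo\simeq ({}_\kc\Mo)^F\simeq \Mo^F$ as $\Rep(A)^G$-modules.

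The main obstacle is the middle step: translating the abstract categorical data $(U_\sigma,c^\sigma,\mu_{\sigma,\tau})$ into the concrete algebraic data $(\overline{\sigma},\overline{J}_\sigma,\overline{\theta}_{(\sigma,\tau)})$ requires making consistent choices of the identifications $\omega_\sigma:U_\sigma(\kc)\simeq\kc$ and then verifying each coherence axiom one by one. A cleaner, choice-free alternative is to apply Lemma \ref{exact-mod-quasi}(2) directly to the exact $\Rep(A\#G)$-module $\Mo^F$, where exactness is guaranteed by Proposition \ref{mod-cat-equi}(2), thereby obtaining an $A\#G$-comodule algebra $\mathcal{L}$ with $\Mo^F\simeq {}_\mathcal{L}\Mo$; the $G$-grading on $A\#G$ pulls back to an $F$-grading on $\mathcal{L}$, and matching the underlying $\Rep(A)$-module structures forces $\mathcal{L}_e=\kc$, so that $\mathcal{L}$ becomes an $F$-crossed $A\#G$-comodule algebra in the sense of Definition \ref{defi:crossed-p-comodalg}, to which Proposition \ref{cross-comod-alg} applies to yield the desired $F$-crossed system on $\kc$ together with the isomorphism $\kc\#F\simeq\mathcal{L}$.
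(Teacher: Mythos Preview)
Your main approach has a genuine gap at the Morita reconstruction step. An abelian autoequivalence $U_\sigma$ of ${}_\kc\Mo$ corresponds to an invertible $(\kc,\kc)$-bimodule $U_\sigma(\kc)$, not to an algebra automorphism: there is no reason, for a general $\kc$ produced by Lemma~\ref{exact-mod-quasi}(2), that $U_\sigma(\kc)\simeq\kc$ as a left $\kc$-module, so your $\omega_\sigma$ and hence your $\overline{\sigma}$ may not exist. Even granting their existence, the coherence of the choices $\omega_\sigma$ across all $\sigma$ (needed to get \eqref{cross-com3} and \eqref{cross-com5}) is not addressed beyond acknowledging it is the ``main obstacle''. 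Your alternative route also does not close the argument: applying Lemma~\ref{exact-mod-quasi}(2) to $\Mo^F$ over $A\#G$ gives an $A\#G$-comodule algebra $\mathcal{L}$, but nothing forces the induced decomposition of $\mathcal{L}$ to be supported on $F$ rather than on all of $G$, nor that each homogeneous component $\mathcal{L}_\sigma$ contain an invertible element, both of which are required by Definition~\ref{defi:crossed-p-comodalg} before Proposition~\ref{cross-comod-alg} can be invoked.

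The paper avoids this difficulty by a different manoeuvre: rather than realising $\Mo$ over $A$ and then trying to lift the abstract $U_\sigma$ to automorphisms, it first realises $\Mo$ as a $\Rep(B)$-module, where $B$ is a quasi-Hopf algebra with $\Rep(B)\simeq\Rep(A)\rtimes F$ and a projection $\pi:B\to A$. The point is that the $F$-equivariant structure on $\Mo$ is exactly a $\Rep(A)\rtimes F$-module structure, and under the dictionary of \S\ref{section:Ggraded} the functor $U_\sigma$ becomes $[\uno,\sigma]\otb(-)$, i.e.\ tensoring with a one-dimensional $B$-module given by a character $\chi_\sigma:B\to\ku$. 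Once $\Mo\simeq{}_\kc\Mo$ for a $B$-comodule algebra $\kc$, the automorphism $\overline{\sigma}(k)=\chi_\sigma(k_{-1})k_0$ is written down directly, with no Morita ambiguity, and $\overline{J}_\sigma$, $\overline{\theta}_{(\sigma,\tau)}$ are read off from $c^\sigma_{A,\kc}(1\ot 1)$ and $(\mu_{\tau,\sigma})_\kc(1)$. The $A$-comodule algebra in the statement is then $\kc^\pi$, obtained by pushing the $B$-coaction along $\pi$. In short, the paper's trick is to enlarge the base from $A$ to $B\simeq A\#F$ \emph{before} choosing the comodule algebra, so that the equivariant functors are already in character form; your approach tries to recover that form after the fact, which is where the argument breaks.
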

\begin{proof} Let $B$ be a finite-dimensional quasi-Hopf algebra such that there is a quasi-Hopf algebra projection $\pi:B\to A$ and an equivalence $\Rep(B)\simeq \Rep(A)\rtimes F$ of tensor categories, see section \ref{section:Ggraded}. Since $\Mo$ is $F$-equivariant follows from Proposition \ref{mod-cat-equi} that $\Mo$ is an exact $\Rep(B)$ module category.

Hence there exists a left $B$-comodule algebra $(\kc,\lambda, \Phi^\lambda)$ such that $\Mo\simeq {}_\kc\Mo$ as $\Rep(B)$-modules. Let us recall that the equivariant structure is given by
$$(U_\sigma, c^\sigma):\Mo\to \Mo^\sigma,\quad U_\sigma(M)=[\uno,\sigma]\otb M,$$
for all $\sigma\in F$, $M\in \Mo$ together with a family of natural isomorphisms $\mu_{\sigma,\tau}:U_\sigma\circ U_\tau\to U_{\sigma\tau}$ for any $\sigma, \tau\in F$.  Under the equivalence $\Rep(B)\simeq \Rep(A)\rtimes F$ the object $[\uno,\sigma]$ correspond to a 1-dimensional representation of $B$. For any $\sigma\in F$ let us denote by $\chi_\sigma:B\to \ku$ the corresponding character and the algebra map $\overline{\sigma}:\kc\to \kc$, $\overline{\sigma}(k)=\chi_\sigma(k\_{-1})\, k\_0$, for all $k\in\kc$.
\medbreak

Define $\lambda^\pi=(\pi\ot\id) \lambda$, then $(\kc,\lambda^\pi, (\pi\ot \pi\ot\id)(\Phi^\lambda))$ is a left $A$-comodule algebra that we will denote by $\kc^\pi$. The equivalence $\Mo\simeq {}_\kc\Mo$ of $\Rep(B)$-module categories induces an equivalence $\Mo\simeq {}_{\kc^\pi}\Mo$ of $\Rep(A)$-modules. Under this equivalence the functors $U_\sigma:{}_{\kc^\pi}\Mo\to ({}_{\kc^\pi}\Mo)^\sigma$ are given as follows. For any $M\in {}_{\kc^\pi}\Mo$, $U_\sigma(M)=M$ and the action of $\kc$ on $M$ is given by
$$k\cdot m = \overline{\sigma}(k)\cdot m,\quad \text{ for all } k\in\kc, m\in M.$$
For any $\sigma, \tau\in F$ denote
$$\overline{J}_\sigma=c_{A,\kc}^\sigma(1\ot 1)^{-1}, \quad \overline{\theta}_{\sigma,\tau}=(\mu_{\tau,\sigma})_{\kc}(1)^{-1}.$$
Turns out that the collection $(\overline{\sigma},\overline{\theta}_{(\sigma,\tau)}, \overline{J}_\sigma)_{\sigma,\tau \in F}$ is an $F$-crossed system compatible with $(\sigma_*,\theta_{(\sigma,\tau)}, J_\sigma)_{\sigma,\tau \in G}$ for the $A$-comodule algebra $\kc^\pi$. Indeed  for any $\sigma\in F$ the pair $(\overline{\sigma},\overline{J_\sigma})$ is a $(\sigma_*, J_\sigma)$-twisted automorphism since equation \eqref{twisted-end-comodalg2} follows from the fact that $c^\sigma$ satisfies \eqref{modfunctor1} and equation \eqref{twisted-end-comodalg3} follows since $c^\sigma$ is a $\kc$-module morphism. Equation \eqref{cross-com2} follows since $\mu_{\sigma,\tau}$ is a morphism of $\kc$-modules, equation \eqref{cross-com3} follows from \eqref{mod-equi1} and
equation \eqref{cross-com5} follows from \eqref{mod-equi2}. The equivalence ${}_{\kc^\pi\# F}\Mo\simeq \Mo^F$ as $\Rep(A)^G$-module categories follows from Proposition \ref{equiv-mod-c}.

\end{proof}

\section{Module categories over the quasi-Hopf algebras $A(H,s)$}\label{quasi-basi-ex}

\subsection{Basic Quasi-Hopf algebras $A(H,s)$}\label{s:basicquasi}

We recall the definition of a the family of basic quasi-Hopf algebras $A(H,s)$ introduced
 by  I. Angiono \cite{A} and used to give a classification of pointed tensor categories with cyclic group of
invertible objects of order $m$ such that $210\nmid m$.

\medbreak

Let $m\in \Na$ and $H=\oplus_{n \geq 0} H(n)$ be a finite-dimensional radically graded pointed Hopf algebra  generated by a group like element $\chi$ of order $m^{2}$ and skew primitive elements $x_1,...,x_{\theta}$ satisfying
\begin{equation}\label{skewprimitives}
\chi x_i\chi^{-1} = q^{d_i}x_i, \quad \Delta(x_i)= x_i \otimes 1 + \chi^{-b_i}\otimes x_i,
\end{equation}
for any $i=1,\dots ,\theta$, where $q$ is a primitive root of 1 of order $m^{2}$,
$H=\nic(V) \# \ku C_{m^{2}}$, where  $\nic(V) $ is the associated Nichols algebra of the Yetter-Drinfeld module $V \in \gyd$.

\medbreak

We shall further assume that $\nic(V) $ has a basis $\{x_1^{s_1}\dots x_\theta^{s_\theta}: 0\leq s_i\leq N_i\}.$

\begin{rmk} The above condition does not hold for any Nichols algebra. If $V$ has diagonal braiding with
Cartan matrix of type $A_3$ then $\nic(V) $ is not generated by elements of degree 1. This conditions is satisfied for
example for any quantum linear space.
\end{rmk}

Set $\sigma:=\chi^{m}$ and denote by $\{1_i:i\in C_{m^{2}}\},$ $\{\uno_j: j\in C_m\}$ the families of primitive idempotents in $\ku  C_{m^{2}}$ and $\ku C_m$ respectively. That is
$$1_i=\frac{1}{m^{2}}\sum_{k=0}^{m^2-1}\; q^{-ki}\, \chi^k, \quad \uno_j=\frac{1}{m}\sum_{l=0}^{m-1}\; q^{-mlj}\, \sigma^l.$$

For any $0\leq s\leq m-1$ set $J_s= \sum_{i,j=0}^{m^{2}-1} c(i,j)^s 1_i\otimes1_j$, where $c(i,j):= q^{j(i-i')}$. Here $j'$ denotes the remainder in the division by $m$. The associator $\Phi_s=dJ_s$ is written explicitly  as
\begin{equation}\label{dassociator}
    \Phi_s:= \sum_{i,j,k=0}^{m-1} \omega_s(i,j,k) \uno_i \otimes \uno_j \otimes \uno_k,
\end{equation}
where $\omega_s: (C_{m})^3 \rightarrow \ku^{\times}$ is  the 3-cocycle defined by $\omega_s(i,j,k) = q^{sk(j+i-(j+i)')}$. Consider the quasi-Hopf algebra $(H_{J_s}, \Phi_s)$ obtained by twisting $H$. Denote $\Upsilon(H)= \{1\leq s \leq m-1: b_i \equiv s d_i \; \text{mod}(m), \, 1 \leq i \leq \theta \}$. For any $s\in \Upsilon(H)$ the quasi-Hopf algebra $A(H,s)$ is defined as the subalgebra of $H$ generated by $\sigma$ and $x_1,...,x_\theta$. The algebra $A(H,s)$ is a quasi-Hopf subalgebra of $H_{J_s}$ with associator $\Phi_s$ such that $A(H, s)/ \Rad A(H,s) \cong \ku[C_{m}]$. See \cite[Prop. 3.1.1]{A}.

For any $1\leq i \leq \theta$ we have that
\begin{align*}\Delta_{J_s}(x_i)&= \sum_{y=0}^{m-1} q^{b_iy}  \uno_{y} \ot x_i + \sum_{z=0}^{m-1} \left(\sum_{y=0}^{m-d_i'-1} q^{(d_i'-d_i)sz}x_i \uno_{y} \ot \uno_{z} \right.
  \\ &\left. + \sum_{j=m-d_i'}^{m-1} q^{(d_i'+m-d_i)sz} x_i \uno_{y} \ot \uno_{z} \right).
\end{align*}

\begin{rmk} Our definition of $A(H,s)$ is slightly different that the one given in \cite[$\S$ 3]{A}. This is not a problem since our quasi-Hopf algebras are isomorphic to the ones defined in \textit{loc. cit.} except that the $s$ may change. The difference comes from the fact that we are using $(\chi^{-b_i}, 1)$ skew-primitive elements instead of $(1,\chi^{b_i})$ skew-primitive elements.
\end{rmk}

\subsection{$C_m$-crossed system over $A(H,s)$} The cyclic group with $m$ elements will be denoted by $C_m=\{1,h,h^2,\dots, h^{m-1}\}$. For any $0\leq i < m$ set $((h^i)_*,J_{h^i})$ the twisted endomorphism of   $A(H,s)$ given by $$J_{h^i}=1\ot 1,\quad (h^i)_*(a)=\chi^{i'} a\chi^{-i'}\quad \text{ for all } a\in A(H,s).$$
For any $0\leq i, j < m$ define $\theta_{(i,j)}=\theta_{(h^i,h^j)}= \sigma^{\frac{(i+j)-(i+j)'}{m}}$.

\begin{rmk} If $i+j\leq m$ then $\theta_{(i,j)}=1$ and if $i+j>m$ then $\theta_{(i,j)}=\sigma$. In principle the algebra maps $(h^i)_*$ are defined in $H$ but when restricted to $A(H,s)$ they are well-defined.\end{rmk}

These data is a $C_m$-crossed system over $A(H,s)$ such that the equivariantization $\Rep(A)^{C_m}$ is tensor equivalent to $\Rep(H)$. This is contained in the next result which gives an alternative proof for \cite[Thm. 4.2.1]{A}.

\begin{prop}\label{crossed-a} \begin{itemize}
               \item[1.] $((h^i)_*,\theta_{(i,j)},J_{h^i})_{h^i,h^j\in C_m}$ is a $C_m$-crossed system over $A(H,s)$.
               \item[2.] There is an isomorphism of quasi-Hopf algebras $A(H,s)\# C_m\simeq H_{J_s}.$
               \item[3.]   There is a tensor equivalence $\Rep(A)^{C_m}\simeq \Rep(H)$.

             \end{itemize}
\end{prop}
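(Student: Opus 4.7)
The three parts build on each other: I plan to verify the crossed-system axioms in (1), then realize this crossed system as an explicit isomorphism with $H_{J_s}$ in (2), and finally deduce (3) from the combination of (2) and Proposition \ref{equi}.

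For (1), each $(h^i)_* = \operatorname{Ad}(\chi^{i})$ restricts to an algebra automorphism of $A(H,s)$ by \eqref{skewprimitives}, sending $x_j \mapsto q^{i d_j} x_j$ and fixing $\sigma = \chi^m$. The elements $\theta_{(i,j)} \in \{1,\sigma\}$ are grouplike in $H$, and since they lie in $\ku C_{m^2}$ which commutes with $J_s$, they remain grouplike in $H_{J_s}$; this is precisely what \eqref{(vi)} requires when $J_{h^i} = 1 \otimes 1$. Axioms \eqref{(iv)}--\eqref{(v)} reduce to the 2-cocycle identity for the canonical extension $1 \to C_m \to C_{m^2} \to C_m \to 1$, and \eqref{(iii)} reduces to the identity $\sigma^{\lfloor (i+j)/m \rfloor} x_k \sigma^{-\lfloor (i+j)/m \rfloor} = q^{m d_k \lfloor(i+j)/m\rfloor} x_k$, which compensates exactly the gap between $q^{(i+j)d_k}$ and $q^{(i+j)' d_k}$; axioms \eqref{(i)}, \eqref{(ii)}, \eqref{(v)} are immediate from the definitions.

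For (2), I would define $\Psi \colon A(H,s) \# C_m \to H_{J_s}$ by $\Psi(a \# h^i) = a\chi^{i}$. Since $\chi^m = \sigma \in A(H,s)$ and conjugation by $\chi$ preserves $A(H,s)$, one has a direct sum decomposition $H = \bigoplus_{i=0}^{m-1} A(H,s)\chi^{i}$ as left $A(H,s)$-modules, so $\Psi$ is a linear isomorphism. Multiplicativity is a short check using $\theta_{(i,j)} \chi^{(i+j)'} = \chi^{i+j}$. For the coproduct one uses $\Delta_{J_s}(a \chi^i) = J_s \Delta_H(a)(\chi^i \otimes \chi^i) J_s^{-1} = (J_s \Delta_H(a) J_s^{-1})(\chi^i \otimes \chi^i)$, the second equality holding because $\chi^i \otimes \chi^i \in (\ku C_{m^2})^{\otimes 2}$ commutes with $J_s$; this matches the coproduct on the crossed product described in Proposition \ref{prop producto cruzado} with $J_{h^i} = 1 \otimes 1$. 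The associator $\Phi_s$ lies in $(\ku C_m)^{\otimes 3} \subset A(H,s)^{\otimes 3}$, so it is preserved tautologically.

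Part (3) then follows by composition: since $\theta_{(i,j)}$ depends only on $i+j$, the symmetry hypothesis of Proposition \ref{equi} is satisfied, giving $\Rep(A(H,s))^{C_m} \simeq \Rep(A(H,s) \# C_m)$; by (2) this is equivalent to $\Rep(H_{J_s})$, and the standard twist equivalence $\Rep(H_{J_s}) \simeq \Rep(H)$ (same underlying functor, monoidal constraint given by the action of $J_s$) closes the chain. The main delicate point is the coproduct compatibility in (2): the crossed-product coproduct of $A(H,s)\#C_m$ uses $\Delta_{J_s}$ on $A(H,s)$, while the target coproduct is $\Delta_{J_s}$ applied to $a\chi^i$, and reconciling them depends crucially on $\chi^i \otimes \chi^i$ being central in $(\ku C_{m^2})^{\otimes 2}$, which is exactly why the twist $J_s$ was chosen to be supported on the idempotents of $\ku C_{m^2}$.
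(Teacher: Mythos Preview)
Your proposal is correct and follows essentially the same approach as the paper: the map $\Psi(a\#h^i)=a\chi^{i}$ is exactly the paper's $\varphi$ (note $i'=i$ for $0\le i<m$), and part (3) is deduced from Proposition~\ref{equi} just as in the paper. You actually supply more detail than the paper does---in particular your verification of the coalgebra-map property of $\Psi$ via the commutation of $\chi^i\otimes\chi^i$ with $J_s$, and your explicit check of the crossed-system axioms in (1), both of which the paper dismisses as immediate.
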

\pf 1. it follows by a straightforward computation.

\smallbreak

2. Define $\varphi: A(H,s)\# C_m \to  H_{J_s}$ the linear map given by
$$\varphi(a\# h^i)=a \chi^{i'}, $$
for all $0\leq i < m$, $a\in A$. Let $0\leq i, j < m$, $a, b\in A$ then
\begin{align*}\varphi((a\# h^i)(b\# h^j))&=\varphi(a (h^i)_*(b)\theta_{(i,j)}\#h^{i+j} )=a \chi^{i'} b\chi^{-i'} \theta_{(i,j)}\chi^{(i+j)'}.
\end{align*}
On the other hand
$$ \varphi(a\# h^i) \varphi(b\# h^j)= a \chi^{i'} b\chi^{j'}.$$
It is enough to prove that $\chi^{i'} b\chi^{j'}= \chi^{i'} b\chi^{-i'} \theta_{(i,j)}\chi^{(i+j)'}$ for $b=x_l$, $1\leq l\leq \theta$. If $i+j\leq m$ then
$$\chi^{i'} x_l\chi^{-i'} \theta_{(i,j)}\chi^{(i+j)'}=q^{d_l i}\, x_l\chi^{i+j}=\chi^i x_l\chi^j.$$
If $i+j=m+k$, $k>0$ then
$$\chi^{i'} x_l\chi^{-i'} \theta_{(i,j)}\chi^{(i+j)'}=q^{d_l i}\, x_l \sigma \chi^k=q^{d_l i}\, x_l  \chi^{i+j}= \chi^i x_l\chi^j.$$
It follows immediately that $\varphi$ is a coalgebra map and it is injective and by a dimension argument is bijective.

\smallbreak

3. It follows from Proposition \ref{equi}.\epf

\begin{rmk} There is a grading on $H$ compatible with the isomorphism of Proposition \ref{crossed-a} (2). Namely, if $\sigma\in G$ then the vector space $H_\sigma$ has basis $\{x_1^{s_1}\dots x_\theta^{s_\theta} \sigma\}$. Define $H^{(i)}=\oplus_{j=0}^{m-1}\, H_{\chi^{mj+i}}$, thus $H=\oplus_{j=0}^{m-1} \,H^{(j)}$. It is not difficult to prove that with this grading $H$ is a $C_m$-crossed product (see definition \ref{defi:crossedp}) and this crossed product is compatible with the isomorphism of Proposition \ref{crossed-a} (2).
\end{rmk}

\subsection{Right simple $A(H,s)$-comodule algebras} We shall present some families of right $A(H,s)$-simple left $A(H,s)$-comodule algebras. This class will be big enough to classify module categories over $\Rep(A(H,s))$ in some cases.

\medbreak

Let $(K,\lambda)$ be a finite-dimensional left $H$-comodule algebra. We say that  $(K,\lambda)$ is \emph{of type 1} if the following assumptions are satisfied:
\begin{itemize}

  \item There exists a subgroup $F\subseteq C_{m^2}$ and $t\in\Na$ such that $K$ has a basis $\{y_1^{r_1}\dots y_t^{r_t} e_f: 0\leq r_j< N_j,  f\in F, t\leq\theta\}$ such that
  $$\quad e_{\chi^a}y_l= q^{ad_l}  y_l e_{\chi^a},\quad \text{ if }\; \chi^a\in F.$$

  \item there is an inclusion $\iota:K\hookrightarrow H$ of $H$-comodules such that
      $$\iota(e_f)=f, \quad \iota(y_l)=x_l,$$
      for all $f\in F$, $l=1\dots t$.
\end{itemize}
Observe that in this case we have that
$$ \lambda(e_f)=f\ot e_f, \quad \lambda(y_l)=x_l\ot 1+ \chi^{-b_l}\otimes y_l.$$

\begin{defi}\label{type1-hopf} We shall say that a Hopf algebra $H=\nic(V)\#\ku G$ is of type 1 if \begin{enumerate}
            \item $\nic(V) $ has a basis $\{x_1^{s_1}\dots x_\theta^{s_\theta}: 0\leq s_i\leq N_i\},$ where $V$ is the vector space generated by $\{x_1,\dots,x_\theta\}$,
            \item any right $H$-simple left $H$-comodule algebra $(K,\lambda)$ is equivariantly Morita  equivalent to a comodule algebra of type 1.
          \end{enumerate}
\end{defi}

\begin{rmk} If $H=\nic(V)\# \ku\Gamma$ is the bosonization of a Nichols algebra and a group algebra
 a finite group $\Gamma$ then $H$ is of type 1 when $V$ is a quantum linear space and $\Gamma$ is
 an Abelian group \cite{Mo2} or when $V$ is constructed from a rack and $\Gamma=\sy_3,\sy_4$ \cite{GM}.
\end{rmk}

Let $(K, \lambda)$ be a type 1 left $H$-comodule algebra such that $K_0=\ku F$ where $F\subseteq C_{m^2}$ is a subgroup such that $  <\sigma>\subseteq F$ we shall denote by $\lambda^{J_s}:K\to  H\ot K$ the map given by
$$ \lambda^{J_s}(x)=J_s\lambda(x) J^{-1}_s,\quad \text{ for all }\, x\in K.$$
Here $J_s$ is identified with an element in $H\ot K$ via the inclusion $\id_H\ot \iota$. The same calculation as in \cite[Prop. 3.1.1]{A} proves that $\lambda^{J_s}(K)\subseteq H\ot K$. Define $(K^{J_s},\lambda^{J_s},\Phi_s(J_s\ot 1))$  the left $H$-comodule algebra with underlying algebra  $K^{J_s}$, coaction $\lambda^{J_s}$ and associator $\Phi_s(J_s\ot 1)$.
It follows from Lema \ref{comod-alg:twisting} that  $(K^{J_s},\lambda^{J_s},\Phi_s)$ is a left $H_{J_s}$-comodule algebra.

\begin{lema}\label{invariance-dynamical-twist} The left $H$-comodule algebras $(K, \lambda)$ and $(K^{J_s},\lambda^{J_s},\Phi_s(J_s\ot 1))$ are equivariantly Morita equivalent, that is  ${}_K\Mo$, ${}_{K^{J_s}}\Mo$ are equivalent as $\Rep(H)$-modules.\qed
\end{lema}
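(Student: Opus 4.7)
The plan is to exhibit a direct equivalence $F: {}_K\Mo \to {}_{K^{J_s}}\Mo$ of $\Rep(H)$-module categories. Since $K$ and $K^{J_s}$ share the same underlying algebra, $F$ can be taken to be the identity functor on objects and arrows. The nontrivial content is the module functor structure: natural isomorphisms $c_{X,M}: F(X \otb M) \to X \otb F(M)$ for $X \in \Rep(H)$, $M \in {}_K\Mo$, which I define by $c_{X,M}(v \ot m) = J_s \cdot (v \ot m)$, with $J_s \in H \ot K$ acting through the $H$-action on $X$ in the first slot and the $K$-action on $M$ in the second.

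First I would check that $c_{X,M}$ is a morphism of $K^{J_s}$-modules. Because the $K^{J_s}$-action on $X \otb F(M)$ is given by $\lambda^{J_s}$ while the $K$-action on $F(X \otb M)$ is given by $\lambda$, the required intertwining boils down to $\lambda^{J_s}(x)\, J_s = J_s \, \lambda(x)$ for all $x \in K$, which is exactly the definition $\lambda^{J_s}(x) = J_s \lambda(x) J_s^{-1}$.

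The main step is the verification of the module functor pentagon \eqref{modfunctor1}. Since $(K,\lambda)$ has trivial associator, the left-hand side reduces to $(\id_X \ot c_{Y,M}) \circ c_{X, Y \otb M}$, whose effect on $v \ot w \ot m$ is the action of $(1 \ot J_s)(\id \ot \lambda)(J_s)$. The right-hand side, using that the associator on $K^{J_s}$ is $\Phi_s(J_s \ot 1)$, is the action of $\Phi_s (J_s \ot 1)(\Delta \ot \id)(J_s)$. Hence the pentagon reduces to the identity
\[
(1 \ot J_s)(\id \ot \lambda)(J_s) \;=\; \Phi_s \, (J_s \ot 1)(\Delta \ot \id)(J_s).
\]
Rearranged, this is $\Phi_s = (1 \ot J_s)(\id \ot \lambda)(J_s)(\Delta \ot \id)(J_s^{-1})(J_s^{-1}\ot 1)$. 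Since the inclusion $\iota: K \hookrightarrow H$ is a morphism of $H$-comodules, i.e.\ $(\id_H \ot \iota) \circ \lambda = \Delta \circ \iota$, and the second tensor factor of $J_s$ lies in the image of the shared group-likes, we have $(\id \ot \lambda)(J_s) = (\id \ot \Delta)(J_s)$ after identification via $\iota$. The right-hand side then collapses to $dJ_s$, which equals $\Phi_s$ by the very definition of $\Phi_s$.

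To finish, an inverse module functor $G: {}_{K^{J_s}}\Mo \to {}_K\Mo$ is built analogously using $J_s^{-1}$, and the compositions $F\circ G$, $G\circ F$ are the identity up to canonical natural isomorphism. The main obstacle is simply the careful bookkeeping of associators in the pentagon: one side carries the trivial associator while the other carries $\Phi_s(J_s \ot 1)$, and the entire verification collapses, as above, to the cocycle identity $\Phi_s = dJ_s$ together with the comodule-map property of the embedding $\iota: K \hookrightarrow H$.
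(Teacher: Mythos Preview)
Your approach is exactly the paper's: take the identity functor with module structure $c_{X,M}$ given by the action of $J_s$, and observe that the module-functor axioms hold. The paper simply declares this ``immediate'' without writing out the pentagon check you carry through; your reduction to $(1\ot J_s)(\id\ot\lambda)(J_s)=\Phi_s(J_s\ot 1)(\Delta\ot\id)(J_s)$ and thence to $\Phi_s=dJ_s$ is correct, as is the use of the $H$-comodule property of $\iota$ to identify $(\id\ot\lambda)(J_s)$ with $(\id\ot\Delta)(J_s)$ (the second tensor factor of $J_s$ indeed lies in $\ku C_m\subseteq K$).
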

\pf For any $X\in \Rep(H)$, $M\in {}_K\Mo$ and any $x\in X$, $m\in M$ define
$$c_{X,M}:X\otk M\to X\otk M, \quad c_{X,M}(x\ot m)=J_s\cdot (x\ot m).$$
It is immediate to prove that the identity functor $(\Id, c):{}_K\Mo\to {}_{K^{J_s}}\Mo$ is an equivalence of module categories.\epf

\begin{defi} Let $(\kc, \lambda , \Phi^\lambda)$ be a left $H_{J_s}$-comodule algebra such that the associator $\Phi^\lambda\in A(H,s)\otk A(H,s)\otk \kc$. Define $\widehat{\kc}=\lambda^{-1}(A(H,s)\otk \kc)$ and denote $\widehat{\lambda}$ the restriction of $\lambda$ to $\widehat{\kc}$. Then $(\widehat{\kc},\widehat{\lambda},\Phi_s)$ is a left $A(H,s)$-comodule algebra. Turns out that this procedure is the inverse of the crossed product.\end{defi}

\subsection{Actions on module categories ${}_{(\widehat{K},\widehat{\lambda},\Phi_s)}\Mo$} For the rest of this section we shall assume now that $m=p$ is a prime number.

Let $(K, \lambda)$ be a type 1 left $H$-comodule algebra such that $K_0=\ku F$ where $F=C_d$ is a cyclic group.

\medbreak

There are two possible cases; when  $<\sigma>\subseteq F$ or $F=\{1\}$. Let us treat the first case. So we assume that  $p|d$. Let $s,l\in\Na$ be such that $d=ps$ and $sl=p$. Let us denote $ \widehat{F}=C_s=<\chi^{lp}>$.

\medbreak

By hypothesis the vector space $K$ has a decomposition $K=\oplus_{f\in F}\, K_f$ where $K_f$ is the vector space with basis $\{y_1^{r_1}\dots y_t^{r_t} e_f: 0\leq r_j\leq N_j\}$. For any $i=0\dots s-1$ define
$$K^{(i)}=\bigoplus_{j: \,\chi^{mj+i}\in C_d}\,\, K_{\chi^{mj+i}}.$$
Observe that $\widehat{K}=K^{(0)}$. With this grading $K$ is an $\widehat{F}$-crossed product.

\begin{lema}\label{action-modcat}  Under the above assumptions ${}_{(\widehat{K},\widehat{\lambda},\Phi_s)}\Mo$ is an $\widehat{F}$-equivariant  $\Rep(A(H,s))$-module category  and $\big({}_{(\widehat{K},\widehat{\lambda},\Phi_s)}\Mo\big)^{\widehat{F}} \simeq {}_K\Mo$ as module categories over $\Rep(H)$.
\end{lema}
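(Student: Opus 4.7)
The plan is to realize $K^{J_s}$ as an $\widehat{F}$-crossed product left $H_{J_s}$-comodule algebra in the sense of Definition~\ref{defi:crossed-p-comodalg}, and then to invoke the general machinery developed in Section~\ref{s:equivarin-quasi}. Transporting the decomposition $K=\bigoplus_{i=0}^{s-1}K^{(i)}$ to $K^{J_s}$, I would verify the four required axioms: (a) multiplicative closure $K^{(i)}K^{(j)}\subseteq K^{(i+j)}$, which follows from $K_f K_g=K_{fg}$ and the subgroup structure of $F$; (b) the existence of invertible elements $e_{\chi^i}\in K^{(i)}$; (c) that $\lambda^{J_s}$ respects the grading, using that $H_{J_s}\simeq A(H,s)\#C_p$ is graded by $\chi$-degree modulo $p$ (Proposition~\ref{crossed-a}(2)) and that $J_s$ is diagonal in the basis $\{1_a\otimes 1_b\}$; and (d) that the associator $\Phi_s(J_s\otimes 1)$ lies in $A(H,s)\otimes A(H,s)\otimes \widehat{K}$, exploiting the explicit form of $J_s$ together with the inclusion $\iota\colon K\hookrightarrow H$.

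Once this crossed product structure is established, Proposition~\ref{cross-comod-alg} produces an $\widehat{F}$-crossed system $(\overline{h^i},\overline{\theta}_{(i,j)},\overline{J}_{h^i})_{i,j\in\widehat{F}}$ on $\widehat{K}$, compatible with the $C_p$-crossed system on $A(H,s)$ of Proposition~\ref{crossed-a}(1), together with an isomorphism $\widehat{K}\#\widehat{F}\simeq K^{J_s}$ of $H_{J_s}$-comodule algebras. With the canonical choice $u_i=e_{\chi^i}$ in the reconstruction, the resulting cocycle is $\overline{\theta}_{(i,j)}=u_iu_ju_{(i+j)'}^{-1}$, which equals $1$ when $i+j<s$ and equals $\sigma$ otherwise; in particular $\overline{\theta}_{(i,j)}=\overline{\theta}_{(j,i)}$. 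Combined with the already-symmetric $\theta_{(i,j)}=\theta_{(j,i)}$ on the $A(H,s)$ side, the symmetry hypothesis \eqref{sym-theta} of Proposition~\ref{equiv-mod-c} is satisfied.

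I can then conclude: Proposition~\ref{equiv-mod-c}(1) gives the $\widehat{F}$-equivariant structure on ${}_{(\widehat{K},\widehat{\lambda},\Phi_s)}\Mo$, while Proposition~\ref{equiv-mod-c}(2) together with the tensor equivalence $\Rep(A(H,s))^{C_p}\simeq\Rep(H)$ from Proposition~\ref{crossed-a}(3) furnishes the chain of $\Rep(H)$-module equivalences
\begin{equation*}
\big({}_{\widehat{K}}\Mo\big)^{\widehat{F}}\;\simeq\;{}_{\widehat{K}\#\widehat{F}}\Mo\;\simeq\;{}_{K^{J_s}}\Mo\;\simeq\;{}_K\Mo,
\end{equation*}
the last equivalence coming from Lemma~\ref{invariance-dynamical-twist}. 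The main obstacle will be the bookkeeping in steps~(c) and (d): checking that $\lambda^{J_s}(K^{(i)})\subseteq (H_{J_s})_{h^i}\otimes K^{(i)}$ and that $\Phi_s(J_s\otimes 1)\in A(H,s)^{\otimes 2}\otimes \widehat{K}$. Both reduce to explicit calculations with the idempotents $1_a$ and the group-likes $e_{\chi^j}$, exploiting the diagonal form of $J_s$ and the very definition $\widehat{K}=(\lambda^{J_s})^{-1}(A(H,s)\otimes K^{J_s})$.
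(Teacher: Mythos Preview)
Your approach is correct and matches the paper's one-line proof, which simply invokes Propositions~\ref{cross-comod-alg} and~\ref{equiv-mod-c}; the $\widehat{F}$-crossed-product structure on $K$ is asserted immediately before the lemma, and your expansion of the argument (including the appeal to Lemma~\ref{invariance-dynamical-twist} at the end) fills in what the paper leaves implicit. One small correction to step~(d): since you are applying Definition~\ref{defi:crossed-p-comodalg} to $K^{J_s}$ as an $H_{J_s}$-comodule algebra, the relevant associator is $\Phi_s$ rather than $\Phi_s(J_s\otimes 1)$, and this lies in $\ku C_p^{\otimes 3}\subseteq A(H,s)^{\otimes 2}\otimes\widehat{K}$ directly from~\eqref{dassociator}, so the check is in fact easier than you indicate.
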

\pf It follows from Proposition \ref{cross-comod-alg} and Proposition \ref{equiv-mod-c}.\epf

Now, let us assume that $F=\{1\}$. Let us endow the space $K\otk \ku C_p$ with the product determined by
$$(y_l\ot \sigma^a)(y_s\ot \sigma^b)= q^{p a d_s}\; y_l y_s\ot \sigma^{a+b}. $$
The space $K\otk \ku C_p$  is a left $H$-comodule algebra with coproduct determined by
$$\lambda(y_l\ot \sigma^a)= x_l \sigma^a\ot 1\ot \sigma^a + \sigma^a\chi^{-b_l}\ot y_l\ot \sigma^a.$$
It is clear that $(K\otk \ku C_p)_0=\ku C_p.$ Thus we can consider the left $A(H,s)$-comodule algebra $(K\otk \ku C_p,\widehat{\lambda},\Phi_s).$

\begin{lema}\label{another-action} Under the above conventions the following holds. \begin{itemize}
                   \item[1.]The module category ${}_{( \ku C_p,\lambda,\Phi_s)}\Mo$ has a $C_p$-action  such that there is an equivalence $\big({}_{( \ku C_p,\lambda,\Phi_s)}\Mo\big)^{C_p}\simeq \Vect_\ku$ as $\Rep(H)$-modules.
                   \item[2.] The module category ${}_{(K\otk \ku C_p,\widehat{\lambda},\Phi_s)}\Mo$ has a $C_p$-action  such that there is an equivalence $\big({}_{(K\otk \ku C_p,\widehat{\lambda},\Phi_s)}\Mo\big)^{C_p}\simeq {}_K\Mo$ as $\Rep(H)$-modules.
                 \end{itemize}

\end{lema}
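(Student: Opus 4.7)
Part (1) is the special case $K=\ku$ of part (2), since $\ku\otk\ku C_p=\ku C_p$, so the plan is to prove (2) and deduce (1) as a corollary. The overall strategy mirrors Lemma \ref{action-modcat}: I would install a $C_p$-crossed system on the $A(H,s)$-comodule algebra $(K\otk\ku C_p,\widehat\lambda,\Phi_s)$ compatible with the $C_p$-crossed system on $A(H,s)$ from Proposition \ref{crossed-a}, then invoke Proposition \ref{equiv-mod-c} to obtain both the $C_p$-action on the module category and the identification of its equivariantization, and finally identify the resulting $H$-comodule algebra, up to equivariant Morita equivalence, with $K$.

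For the first step, I would take for each $a\in\{0,\dots,p-1\}$ the twisted automorphism $\overline{\sigma^a}:K\otk\ku C_p\to K\otk\ku C_p$ to be conjugation by $1\otk\sigma^a$; the defining multiplication $(y_l\otk\sigma^a)(y_s\otk\sigma^b)=q^{pad_s}y_ly_s\otk\sigma^{a+b}$ already records the nontrivial character by which $\overline{\sigma^a}$ acts on $y_l$. The elements $\overline{J}_{\sigma^a}\in A(H,s)\otk(K\otk\ku C_p)$ and the cocycle $\overline{\theta}_{\sigma^a,\sigma^b}\in(K\otk\ku C_p)^\times$ are then read off by matching the coaction $\widehat\lambda$ against \eqref{twisted-end-comodalg2}--\eqref{twisted-end-comodalg3}. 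Verification of \eqref{cross-com1}--\eqref{cross-com5} is a direct computation from the explicit formulas for $(K\otk\ku C_p,\widehat\lambda,\Phi_s)$ together with the cocycle identities for $(\sigma_*,\theta_{(\sigma,\tau)},J_\sigma)$ on $A(H,s)$.

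Proposition \ref{equiv-mod-c}(1) then endows ${}_{K\otk\ku C_p}\Mo$ with the required $C_p$-action over $\Rep(A(H,s))$, and \ref{equiv-mod-c}(2) together with $\Rep(A(H,s))^{C_p}\simeq\Rep(H)$ (Proposition \ref{crossed-a}(3)) identifies the equivariantization with ${}_{(K\otk\ku C_p)\#C_p}\Mo$ as a $\Rep(H)$-module category. To finish, I would exhibit an equivariant Morita equivalence of $H$-comodule algebras $(K\otk\ku C_p)\#C_p\sim K$. The crossed-system data tensor-factorizes, reducing the question to the special case $K=\ku$: with the twist $\tau\sigma=\zeta\sigma\tau$ for $\zeta$ a primitive $p$-th root of unity determined by the crossed system, $\ku C_p\#C_p$ is the quantum torus, hence isomorphic to $M_p(\ku)$. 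Thus $(K\otk\ku C_p)\#C_p\simeq M_p(\ku)\otk K\simeq M_p(K)$, which is Morita equivalent to $K$ compatibly with the $H$-coaction.

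The main obstacle is Step 1: because here $K_0=\ku$, there is no preexisting grading on $K$ to restrict (as was the case in Lemma \ref{action-modcat}), so the data $(\overline{\sigma^a},\overline{J}_{\sigma^a},\overline{\theta}_{\sigma^a,\sigma^b})$ must be constructed from the interaction between $K$ and the adjoined factor $\ku C_p$ rather than extracted from pre-existing structure. Once the crossed system is in place the remaining steps are formal applications of Section \ref{s:equivarin-quasi} together with the quantum-torus computation.
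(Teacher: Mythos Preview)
Your overall plan (reduce (1) to (2), install a $C_p$-equivariant structure, then identify the equivariantization with ${}_K\Mo$) matches the paper, but the specific crossed system you propose does not work, and this undermines the rest of the argument.

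The twisted automorphism cannot be conjugation by $1\otk\sigma^a$. From the product formula you quote, conjugation by $\sigma^a$ sends $y_l\mapsto q^{pad_l}y_l$. But compatibility \eqref{twisted-end-comodalg3} with the $C_p$-crossed system on $A(H,s)$ from Proposition~\ref{crossed-a}, where $(h^a)_*(x_l)=\chi^a x_l\chi^{-a}=q^{ad_l}x_l$, forces $\overline{h^a}(y_l)=q^{ad_l}y_l$. This is \emph{not} inner in $K\otk\ku C_p$: it is ``conjugation by $\chi^a$'', but $\chi$ does not live in the algebra. The paper uses exactly this non-inner automorphism (together with $\sigma\mapsto q^{ap}\sigma$) when it defines $U_i$ directly by $y_l\rhd m=q^{id_l}y_l\cdot m$, $\sigma\rhd m=q^{ip}\sigma\cdot m$, with $c^i=\id$ and $\mu_{i,j}$ the action of $\sigma^{-((i+j)-(i+j)')/p}$.

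Once you use the correct $\overline{h^a}$, your tensor-factorization claim $(K\otk\ku C_p)\#C_p\simeq K\otk(\ku C_p\#C_p)$ also fails, since $\overline{h}$ acts nontrivially on the $K$-factor. So the quantum-torus reduction to $M_p(\ku)$ does not go through as stated. The paper sidesteps this entirely: rather than computing the crossed product and arguing by Morita equivalence, it constructs an explicit inverse pair of functors. Given $N\in{}_K\Mo$ it sets $\Fc(N)=\bigoplus_{i=0}^{p-1}N_i$ with a new $K\otk\ku C_p$-action in which $\sigma$ acts by $q^{pi}$ on $N_i$ and $y_l$ shifts grading by $d_l$; in the other direction it sends an equivariant object $M$ to its $\sigma$-eigenspace $M_0$, using the equivariant isomorphisms $v_i$ to transport the $y_l$-action back into $M_0$.
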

\pf 1. It follows from (2) taking $K=\ku$.
\medbreak

2. Set $\Mo={}_{(K\otk \ku C_p,\widehat{\lambda},\Phi_s)}\Mo$. For any $i=0,\dots,p-1$ define the functors $(U_i, c^i):\Mo\to \Mo^{\sigma^i}$ as follows. For any $M\in \Mo$ $U_i(M)=M$ with a new action $\rhd:(K\otk \ku C_p)\otk M\to M$ of $K\otk \ku C_p$ given by
$$y_l\rhd m= q^{i d_l}\, y_l\cdot m, \quad \sigma\rhd m = q^{i p}\; \sigma\cdot m,$$
for all $l=1,\dots, t$, $m\in M$. For any $X\in \Rep(A), M\in \Mo$ the map $c^i_{X,M}:U_i(X\otk M)\to  F_i(X)\otk U_i(M)$ is the identity.

The isomorphism $\mu_{i,j}:U_i\circ U_j\to U_{i+j}$ is given by the action of $\sigma^{-\frac{(i+j)-(i+j)'}{p}}$. Altogether makes the category ${}_{(K\otk \ku C_p,\widehat{\lambda},\Phi_s)}\Mo$ a $C_p$-equivariant $\Rep(A)$-module category.
\medbreak

Let $N\in {}_K\Mod$. Define $\Fc(N)=\oplus_{i=0}^{p-1}\, N_i$ where $N_i=N$ as vector spaces. Let us define a new action of $\rightharpoonup:K\otk \ku C_p\otk \Fc(N)\to \Fc(N)$ as follows. If $n\in N_i$ then
$$\sigma\rightharpoonup n= q^{pi}\; n\in N_i,\quad y_l\rightharpoonup n=  q^{d_li}\; y_l\cdot n\in N_{(d_l+i)'}.$$
Recall that $a'$ denotes the remainder of $a$ in the division by $p$. Note also that for any $i,j=0,\dots,p-1$ $U_i(N_j)=N_{i+j}$. The module $\Fc(N)$ is a $C_p$-equivariant object in ${}_{(K\otk \ku C_p,\widehat{\lambda},\Phi_s)}\Mo$, indeed for any $i=0,\dots,p-1$ define the isomorphisms $v_i:U_i(\Fc(N))\to \Fc(N)$ as follows: $v_i(n)=q^{-i}\; n \in N_{i+j}$ for any $n\in N_j$  . This maps are $K\otk \ku C_p$-module isomorphisms and they satisfy equation \eqref{F-equivariant-obj}. This defines a functor $\Fc:{}_K\Mod\to{}_{(K\otk \ku C_p,\widehat{\lambda},\Phi_s)}\Mo$ that together with the identity isomorphisms $c_{X, N}: \Fc(X\otk N)\to X\otk \Fc(N)$ becomes a module functor.

If $M\in {}_{(K\otk \ku C_p,\widehat{\lambda},\Phi_s)}\Mo$ then $M=\oplus_{i=0}^{p-1}\, M_i$ where $M_i$ is the eigenspace of the eigenvalue $q^{pi}$ of the action of $\sigma$. The space $M_0$ has a $K$-action as follows. Since $M$ is $C_p$-equivariant there are isomorphisms $v_i:U_i(M)\to M$ such that the restrictions $v_i\mid_{M_0}:M_0\to M_i$ are isomorphisms. If $m\in M_0$, $y_l\in K$ then $y_l\cdot m\in M_{d_l}$, thus we can define $\rightharpoonup:K\otk M_0\to M_0$
$$ y_l\rightharpoonup m= v^{-1}_{d_l}(y_l\cdot m),$$
for all $m\in M_0$. The map $M\mapsto M_0$ is functorial and defines an inverse functor for $\Fc$.
\epf

\subsection{Exact module categories over $\Rep(A(H, s))$}

Now we can formulate the main result of this section.

\begin{teo}\label{main6} Let $H$ be a Hopf algebra of type 1 (see definition \ref{type1-hopf}) and let $\Mo$ be an exact indecomposable module category over $\Rep(A(H, s))$. Then the following statements hold.

\begin{enumerate}
  \item there exists a right $H$-simple left $H$-comodule algebra $(K, \lambda)$ with trivial coinvariants such that $K_0\supseteq C_p$ and there is an equivalence of module categories $\Mo\simeq {}_{(\widehat{K},\widehat{\lambda},\Phi_s)}\Mo$.
  \item If there is an equivalence $ {}_{(\widehat{K'},\widehat{\lambda'},\Phi'_s)}\Mo\simeq {}_{(\widehat{K},\widehat{\lambda},\Phi_s)}\Mo$ as $\Rep(A(H,s))$-modules then $(K, \lambda)$ and $(K', \lambda')$ are equivariantly Morita equivalent $H$-comodule algebras.
\end{enumerate}

\end{teo}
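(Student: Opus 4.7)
The plan is to transfer the classification of exact $\Rep(A(H,s))$-modules to that of exact $\Rep(H)$-modules via the equivariantization $\Rep(A(H,s))^{C_p}\simeq\Rep(H)$ of Proposition \ref{crossed-a}, invoke the type-1 hypothesis on $H$ to produce an $H$-comodule algebra, and return to $\Rep(A(H,s))$ through Lemmas \ref{action-modcat} and \ref{another-action}. Since $p$ is prime, the $C_p$-orbit of $\Mo$ on the set of equivalence classes of indecomposable exact $\Rep(A(H,s))$-modules has length either $1$ or $p$, and this dichotomy governs the two cases of the proof.

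\emph{Equivariant case.} Assume $\Mo$ admits a $C_p$-equivariant structure. By Proposition \ref{main-s5} there is an $A(H,s)$-comodule algebra $(\kc,\lambda,\Phi^\lambda)$ with $\Mo\simeq{}_\kc\Mo$ and a compatible $C_p$-crossed system such that $\kc\#C_p$, viewed through the isomorphism $H\simeq A(H,s)\#C_p$ of Proposition \ref{crossed-a}(2), becomes a left $H$-comodule algebra with ${}_{\kc\#C_p}\Mo\simeq\Mo^{C_p}$ in $\Rep(H)$. Exactness and indecomposability of $\Mo^{C_p}$ follow from Proposition \ref{mod-cat-equi}(2) and a Galois-type argument, and the type-1 hypothesis then produces a right $H$-simple $H$-comodule algebra $K$ of type 1, equivariantly Morita equivalent to $\kc\#C_p$, with $K_0\supseteq\ku C_p$ forced by the crossed-product structure. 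Lemma \ref{action-modcat} yields $({}_{\widehat K}\Mo)^{\widehat F}\simeq{}_K\Mo\simeq\Mo^{C_p}$, and Proposition \ref{mod-cat-equi}(5) delivers $\Mo\simeq{}_{\widehat K}\Mo$.

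\emph{Free-orbit case.} If the orbit of $\Mo$ has length $p$, the sum $\tilde\Mo:=\bigoplus_{i=0}^{p-1}\Mo^{\sigma^i}$ carries a canonical $C_p$-equivariant structure by cyclic permutation of summands. Running the previous argument on $\tilde\Mo$ in combination with Lemma \ref{another-action}(2) produces a right $H$-simple type-1 algebra $K'$ with $K'_0=\ku$ and an equivalence $\tilde\Mo\simeq{}_{K'\otk\ku C_p}\Mo$. Set $K:=K'\otk\ku C_p$; it is $H$-simple with $K_0=\ku C_p$ and trivial coinvariants, and $\Mo$ appears as one block of ${}_{\widehat K}\Mo=\tilde\Mo$. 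The key technical obstacle is then to modify $K$ --- by twisting the crossed system on the ambient construction --- so that the extracted single block is itself realized as ${}_{\widehat{K}}\Mo$ for a genuinely right $H$-simple comodule algebra with the required properties; the equivariant case, by contrast, is essentially formal.

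For part (2), the plan is to apply the $\widehat F$-equivariantization of Lemma \ref{action-modcat} to each side: an equivalence ${}_{\widehat K}\Mo\simeq{}_{\widehat{K'}}\Mo$ in $\Rep(A(H,s))$ promotes, after matching the induced equivariant structures, to an equivalence ${}_K\Mo\simeq{}_{K'}\Mo$ in $\Rep(H)$, which is by definition equivariant Morita equivalence of $K$ and $K'$ as $H$-comodule algebras. The point to verify is that the given $A(H,s)$-equivalence can be arranged to intertwine the $\widehat F$-equivariant structures, which uses the uniqueness clause of Proposition \ref{mod-cat-equi}(5) together with the fact that the $\widehat F$-action on ${}_{\widehat K}\Mo$ is determined by the $H$-comodule algebra structure of $K$.
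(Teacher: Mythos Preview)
Your overall strategy---transfer to $\Rep(H)$ via the equivariantization $\Rep(A(H,s))^{C_p}\simeq\Rep(H)$, invoke the type-1 hypothesis, and return via Lemmas \ref{action-modcat} and \ref{another-action}---is the same as the paper's. The organization, however, differs in a way that creates the gaps you yourself flag.

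You split at the outset by orbit type on the $\Rep(A(H,s))$ side. The paper does not: it realizes $\Mo\simeq{}_\kc\Mo$ via Lemma \ref{exact-mod-quasi}, notes that $\Mo$ is $F$-equivariant for some subgroup $F\subseteq C_p$, passes to $\Mo^F\simeq{}_S\Mo$ over $\Rep(H)$ using \cite[Thm.~3.3]{AM}, and only \emph{then} case-splits---on $S_0\in\{\ku,\,\ku C_p,\,\ku C_{p^2}\}$. For each value of $S_0$, Lemma \ref{action-modcat} (when $C_p\subseteq S_0$) or Lemma \ref{another-action} (when $S_0=\ku$, replacing $S$ by $S\otk\ku C_p$) produces directly a $K$ with $K_0\supseteq C_p$ together with an equivalence ${}_S\Mo\simeq\big({}_{(\widehat K,\widehat\lambda,\Phi_s)}\Mo\big)^F$; Proposition \ref{mod-cat-equi}(5) then finishes. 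No block-extraction from an induced category and no twisting of the crossed system is needed.

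Your equivariant case contains an unjustified step: the type-1 hypothesis gives a $K$ equivariantly Morita equivalent to $\kc\#C_p$, but Morita equivalence does not ``force'' $K_0\supseteq\ku C_p$---the sub-case $K_0=\ku$ is precisely where Lemma \ref{another-action} enters, and you invoke that lemma only in the other branch. Conversely, in your free-orbit case there is no reason to expect $K'_0=\ku$, and the ``key technical obstacle'' you identify is an artifact of having attached the two lemmas to the wrong branches relative to the natural $S_0$-trichotomy.

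For part (2), your outline is close but omits one step present in the paper: after equivariantizing both sides to obtain ${}_{(K,\lambda,\Phi_s)}\Mo\simeq{}_{(K',\lambda',\Phi'_s)}\Mo$ over $\Rep(H_{J_s})$, one still needs Lemma \ref{invariance-dynamical-twist} to convert this into an equivalence ${}_K\Mo\simeq{}_{K'}\Mo$ over $\Rep(H)$, which is the desired equivariant Morita equivalence.
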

\pf

1. By Lemma \ref{exact-mod-quasi} there exists a left $A(H, s)$-comodule algebra $(\kc, \lambda,\Phi)$ such that $\Mo\simeq {}_\kc\mo$. The category ${}_\kc\mo$ is $F$-equivariant for some subgroup $F\subseteq C_{p}$. Thus it follows from \cite[Thm 3.3]{AM} that there is a right $H$-simple left $H$-comodule algebra $(S, \delta)$ with trivial coinvariants such that $\big({}_\kc\mo\big)^F\simeq {}_S\Mo$ as $\Rep(H)$-modules. Hence $S_0=\ku 1$, $S_0=\ku C_p$ or $S_0=\ku C_{p^2}.$ In any case, it follows from Lemmas \ref{action-modcat}, \ref{another-action} that there is a right $H$-simple left $H$-comodule algebra $(K, \lambda)$ with trivial coinvariants such that $K_0\supseteq C_p$ and there is an equivalence  ${}_S\Mo\simeq\big({}_{(\widehat{K'},\widehat{\lambda'},\Phi'_s)}\Mo\big)^F$. Whence $\big({}_\kc\mo\big)^F \simeq\big({}_{(\widehat{K'},\widehat{\lambda'},\Phi'_s)}\Mo\big)^F$, thus using Proposition \ref{mod-cat-equi} (5) we get the result.
\medbreak

2. There exists a subgroup $F\subseteq C_p$ such that both module categories $ {}_{(\widehat{K'},\widehat{\lambda'},\Phi'_s)}\Mo, {}_{(\widehat{K},\widehat{\lambda},\Phi_s)}\Mo$ are $F$-equivariant and there are equivalences of module categories over $\Rep(H_{J_s})$
$${}_{(K,\lambda,\Phi_s)}\Mo\simeq\big({}_{(\widehat{K},\widehat{\lambda},\Phi_s)}\Mo\big)^F
\simeq\big({}_{(\widehat{K'},\widehat{\lambda'},\Phi'_s)}\Mo \big)^F
\simeq{}_{(K',\lambda',\Phi'_s)}\Mo.$$ Thus by Lemma \ref{invariance-dynamical-twist} follows that ${}_K\Mo\simeq {}_{K'}\Mo$.
\epf

\subsection{Some classification results}
We apply Theorem \ref{main6} to obtain the classification of module categories over $\Rep(A(H, s))$ where $H$ is the bosonization of a quantum linear space.

\medbreak

Let  $g_1,
\dots, g_{\theta}\in C_{p^2}$, $\chi_1,
\dots,\chi_{\theta}\in\widehat{C_{p^2}}$ be a datum for a quantum linear space and let $V=V(g_1,\dots,
g_{\theta},\chi_1, \dots,\chi_{\theta})$ the associated Yetter-Drinfeld
module over $\ku C_{p^2}$ generated as a vector space by
$x_1,\dots, x_{\theta}$. For more details see \cite{AS}.

The Hopf algebra $H=\nic(V)\# \ku C_{p^2}$ is a type 1 Hopf algebra, see \cite{Mo2}.

\medbreak

Let us define now a family of right $H$-simple left $H$-comodule
algebras. Let $F\subseteq C_{p^2}$ be a subgroup and $\xi=(\xi_{i})_{i=1\dots \theta}$,
$\alpha=(\alpha_{ij})_{1\leq i<j\leq\theta}$ be two families of
elements in $\ku$ satisfying
\begin{align}\label{parameters1} \xi_{i}=0\; \text{ if }\;
g^{N_i}_i\notin F  \text{ or }\;\chi^{N_i}_i(f)\neq
1,
\end{align}
\begin{align}\label{parameters2}
 \alpha_{ij}=0\; \text{ if }\;
g_ig_j \notin F  \text{ or }\;
 \chi_i\chi_j(f)\neq 1,
\end{align}
for all $f\in F$. In this case we shall say that the pair $(\xi,
\alpha)$ is a \emph{compatible comodule algebra datum} with respect
to the quantum linear space $V$ and the
group $F$.

 The algebra $\Ac(V, F, \xi, \alpha)$ is the algebra
generated by elements in $\{v_i:i=1\dots \theta\}$, $\{e_f: f\in
F\}$ subject to relations
\begin{align}\label{relations1} e_f e_g =  e_{fg},\quad
e_f v_i =  \chi_i(f)\; v_i e_f,
\end{align}
\begin{align}\label{relations2}  v_i v_j - q_{ij}\; v_jv_i=\begin{cases}
  \alpha_{ij}\; e_{ g_ig_j}\;\; \;\text{ if }
  g_ig_j\in F\\
0 \;\;\; \;\;\;\quad\quad\text{ otherwise, }
\end{cases}
\end{align}
\begin{align}\label{relations3} v_i^{N_i}=\begin{cases}  \xi_{i}\; e_{ g_i^{N_i}}\;\; \;\text{ if } g_i^{N_i}\in F\\
0 \;\;\; \;\;\;\quad\quad\text{ otherwise, }
\end{cases}
\end{align}
for any $1\leq i< j\leq \theta$.
If $W\subseteq V$ is a $\ku C_{p^2}$-subcomodule invariant under the
action of $F$, we define $\Ac(W, F, \xi, \alpha)$ as the
subalgebra of $\Ac(V, F, \xi, \alpha)$ generated by $W$ and
$\{e_f: f\in F\}$.

The algebras $\Ac(V, F, \xi, \alpha)$ are right $H$-simple left $H$-comodule algebras with coaction determined by
$$\lambda(v_i)=x_i\ot 1+ g_i\ot v_i,\quad \lambda(e_f)=f\ot e_f,$$
for all $i=1,\dots,\theta$, $f\in F$. The subalgebras $\Ac(W, F, \xi, \alpha)$ are also right $H$-simple left $H$-subcomodule algebras.

\begin{teo}\label{clasification-qspaces}\cite[Thm 4.6, Thm. 4.9]{Mo2}
Let $\Mo$ be an exact indecomposable module category
over $\Rep(H)$.
\begin{itemize}
  \item[1.] There exists a subgroup $F\subseteq C_{p^2}$, a compatible datum
$(\xi,\alpha)$ and $W\subseteq V$ a subcomodule invariant under
the action of $F$ such that $\Mo\simeq {}_{\Ac(W,F, \xi,
\alpha)}\Mo$ as module categories.
  \item[2.] The left $H$-comodule algebras $\Ac(W,F, \xi, \alpha)$, $\Ac(W',F', \xi', \alpha')$ are equivariantly Morita equivalent if and only if $(W,F, \xi, \alpha)=(W',F', \xi', \alpha')$.
\end{itemize}\qed
\end{teo}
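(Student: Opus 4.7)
The plan is to follow the strategy of \cite{Mo2} in four main steps. First I would invoke Lemma \ref{exact-mod-quasi} to reduce the problem: every exact indecomposable $\Rep(H)$-module category is equivalent to ${}_K\Mo$ for some right $H$-simple left $H$-comodule algebra $(K, \lambda)$, and one can normalize so that $K^{\co H} = \ku$ by passing to an equivariantly Morita equivalent representative. This is the correct setting in which to read off the invariants $(W, F, \xi, \alpha)$.

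Second, I would analyze the coradical. The Loewy filtration $K_0 \subseteq K_1 \subseteq \cdots$ induced by the coradical filtration of $H$ satisfies, by the Hopf-algebra analogue of Lemma \ref{right-simplicity}, that $K_0$ is a right $H_0$-simple left $H_0$-comodule algebra. Since $H_0 = \ku C_{p^2}$ is a group algebra, a standard argument forces $K_0 = \ku F$ for a unique subgroup $F \subseteq C_{p^2}$, producing the datum $F$.

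Third, I would lift generators from $V$ and derive the defining relations. The degree-one part of $\gr K$ embeds into $V \otimes \ku F$, and projecting on the $\ku F$-coinvariants identifies a $\ku C_{p^2}$-subcomodule $W \subseteq V$ which is forced to be $F$-invariant by compatibility of the coaction with the left $K_0$-action. Pick lifts $v_i \in K_1$ of a basis of $W$. For any $f \in F$, the element $e_f v_i - \chi_i(f) v_i e_f$ lies in $K_0$ and is $H$-coinvariant, hence must vanish by right $H$-simplicity and the triviality of $K^{\co H}$, yielding \eqref{relations1}. Likewise $v_iv_j - q_{ij} v_jv_i$ and $v_i^{N_i}$ are coinvariant modulo the filtration and sit in $\ku F$; the same right $H$-simplicity argument, together with the freeness consequences of Lemma \ref{skryabin-quasi}, forces them to equal scalar multiples of $e_{g_ig_j}$ and $e_{g_i^{N_i}}$ whenever those group elements belong to $F$, giving \eqref{relations2} and \eqref{relations3} with parameters $(\xi, \alpha)$. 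The compatibility conditions \eqref{parameters1}, \eqref{parameters2} are then exactly what is needed so that the right-hand sides commute with the $e_f$ according to \eqref{relations1}. A Poincaré–Birkhoff–Witt type basis argument, comparing the dimension of $K$ with that of the deformation, shows that no further relations can occur, completing the proof of (1).

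For part (2), the subgroup $F$ and subcomodule $W$ are categorically intrinsic: $F$ is recovered via the action of the group of invertibles of $\Rep(H)$ on the set of simple objects of $\Mo$ (equivalently, as the rank of $K_0$), while $W$ is detected through the residual action of the skew-primitive generators on simples of $\Mo$. The scalars $(\xi, \alpha)$ then arise as obstruction classes controlling how certain self-extensions of simple objects in $\Mo$ split, and distinct values produce inequivalent module structures. The main obstacle is precisely this uniqueness step, since one must exhibit categorical invariants separating different $(\xi, \alpha)$; in \cite{Mo2} this is handled by an explicit analysis of internal $\Hom$ algebras in $\Mo$, which reconstruct $\Ac(W, F, \xi, \alpha)$ up to isomorphism and thereby show that the assignment $(W, F, \xi, \alpha) \mapsto {}_{\Ac(W, F, \xi, \alpha)}\Mo$ descends to a bijection on equivariant Morita classes.
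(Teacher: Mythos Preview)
The paper does not prove this theorem at all: it is stated with a terminal \qed and the citation \cite[Thm 4.6, Thm.~4.9]{Mo2}, meaning it is imported wholesale from \cite{Mo2} as a black box to be applied in the subsequent classification of $\Rep(A(H,s))$-modules. There is therefore no ``paper's own proof'' to compare your proposal against.

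Your sketch is a reasonable outline of the strategy actually carried out in \cite{Mo2}, and the ingredients you cite from the present paper (Lemma~\ref{exact-mod-quasi}, Lemma~\ref{right-simplicity}, Lemma~\ref{skryabin-quasi}) are the correct Hopf-algebra analogues of the tools used there. A couple of points where your sketch is looser than the argument in \cite{Mo2}: the identification $K_0=\ku F$ requires first showing that the relevant $2$-cocycle on $F$ is trivial (automatic here since $F$ is cyclic), and the passage from ``$v_iv_j-q_{ij}v_jv_i$ and $v_i^{N_i}$ land in $K_0$'' to the precise form of \eqref{relations2}--\eqref{relations3} needs the freeness of $K$ over $K_0$ together with a careful PBW-type dimension count, not merely right $H$-simplicity. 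For part~(2), the argument in \cite{Mo2} is not quite via abstract obstruction classes but rather by showing that an equivariant Morita equivalence forces an $H$-comodule algebra isomorphism, from which equality of the data $(W,F,\xi,\alpha)$ is read off directly; your description of this step is more heuristic than what is actually done.
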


Given a compatible datum $(\xi, \alpha)$ with respect to $V$ and $C_p$  define the left $A(H, s)$-comodule algebra  $\widehat{\Ac}(V, \xi, \alpha)$ with underlying algebra equal to $\Ac(V,C_p, \xi, \alpha)$ and coaction $\widehat{\lambda}:\widehat{\Ac}(V, \xi, \alpha)\to A(H, s)\otk\widehat{\Ac}(V, \xi, \alpha)$ given by
$\widehat{\lambda}(a)=J_s\lambda(a) J^{-1}_s$ for all $a\in \widehat{\Ac}(V, \xi, \alpha)$. If $W\subseteq V$ is a $\ku C_{p^2}$-subcomodule invariant under the
action of $C_p$ define  $\widehat{\Ac}(W, \xi, \alpha)$ as the subalgebra of $\widehat{\Ac}(V, \xi, \alpha)$ generated by $W$ and $C_p$.

\medbreak

As a  consequence of Theorem \ref{main6} we have the following result.

\begin{teo} Let $\Mo$ be an exact indecomposable module category
over $\Rep(A(H, s))$.
\begin{itemize}
  \item[1.] There exists a compatible datum
$(\xi,\alpha)$ and $W\subseteq V$ a subcomodule invariant under
the action of $C_p$ such that there is an equivalence $\Mo\simeq {}_{\widehat{\Ac}(W, \xi, \alpha)}\Mo$ as $\Rep(A(H, s))$-module categories.
 \item[2.] The comodule algebras $\widehat{\Ac}(W, \xi, \alpha)$, $\widehat{\Ac}(W', \xi', \alpha')$ are equivariantly Morita equivalent if and only if $(W, \xi, \alpha)=(W', \xi', \alpha')$.
\end{itemize}\qed
\end{teo}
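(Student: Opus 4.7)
The plan is to derive the classification by combining Theorem \ref{main6} with Theorem \ref{clasification-qspaces}. The key input, recalled from \cite{Mo2}, is that when $H=\nic(V)\#\ku C_{p^2}$ is the bosonization of a quantum linear space, $H$ is of type 1 in the sense of Definition \ref{type1-hopf}; this is exactly what is needed to apply both theorems.

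For part (1), I first invoke Theorem \ref{main6}(1) to produce a right $H$-simple left $H$-comodule algebra $(K,\lambda)$ with trivial coinvariants and $K_0\supseteq\ku C_p$ such that $\Mo\simeq{}_{(\widehat{K},\widehat{\lambda},\Phi_s)}\Mo$. I then apply Theorem \ref{clasification-qspaces}(1) to replace $K$, up to equivariant Morita equivalence of $H$-comodule algebras, by a standard model $\Ac(W,F,\xi,\alpha)$. The constraint $K_0\supseteq\ku C_p$ forces $F=C_p$ or $F=C_{p^2}$. In the case $F=C_p$, by construction (together with Lemma \ref{invariance-dynamical-twist} to handle the twist by $J_s$), the $A(H,s)$-comodule algebra $\widehat{\Ac(W,C_p,\xi,\alpha)}$ coincides with $\widehat{\Ac}(W,\xi,\alpha)$ of the statement. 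Lemma \ref{action-modcat} promotes the Morita equivalence at the $\Rep(H)$ level to an equivalence of the associated $\widehat{F}$-equivariant $\Rep(A(H,s))$-module categories, and Proposition \ref{mod-cat-equi}(5) then yields $\Mo\simeq{}_{\widehat{\Ac}(W,\xi,\alpha)}\Mo$.

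In the case $F=C_{p^2}$, the task is to reduce to the previous case, i.e., to show that ${}_{\widehat{\Ac(W,C_{p^2},\xi,\alpha)}}\Mo$ is still equivalent to ${}_{\widehat{\Ac}(W',\xi',\alpha')}\Mo$ for some datum compatible with $C_p$. The guiding observation is that $A(H,s)\cap\ku C_{p^2}=\ku C_p$, so the extra $C_{p^2}/C_p$-symmetry of the $H$-comodule algebra is invisible to $A(H,s)$ and must be absorbed by the equivariantization $\Rep(A(H,s))^{C_p}\simeq\Rep(H)$ provided by Proposition \ref{crossed-a}(3). Concretely, I would use the $C_p$-crossed system machinery of Section \ref{s:basicquasi} to extract an $F=C_p$ comodule algebra with an equivalent representation category.

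For part (2), suppose ${}_{\widehat{\Ac}(W,\xi,\alpha)}\Mo\simeq{}_{\widehat{\Ac}(W',\xi',\alpha')}\Mo$ as $\Rep(A(H,s))$-modules. Theorem \ref{main6}(2) implies that the $H$-comodule algebras $\Ac(W,C_p,\xi,\alpha)$ and $\Ac(W',C_p,\xi',\alpha')$ are equivariantly Morita equivalent, and Theorem \ref{clasification-qspaces}(2) then forces $(W,\xi,\alpha)=(W',\xi',\alpha')$. The converse is trivial.

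The main obstacle is the $F=C_{p^2}$ case in part (1): verifying that the extra $C_{p^2}/C_p$-equivariance collapses under the passage $K\mapsto\widehat{K}$ from $H$-comodule algebras to $A(H,s)$-comodule algebras, so that every indecomposable exact module category over $\Rep(A(H,s))$ is in fact represented by a datum with $F=C_p$. This amounts to a careful bookkeeping of how the twist $J_s$, the coradical filtration, and the restriction to $A(H,s)\subset H_{J_s}$ interact.
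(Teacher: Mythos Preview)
Your approach is exactly the one the paper takes: the theorem is presented as an immediate corollary of Theorem~\ref{main6}, with no argument beyond the \qed, so combining Theorem~\ref{main6} with Theorem~\ref{clasification-qspaces} is precisely the intended derivation. Your reasoning for part~(2) and for the $F=C_p$ branch of part~(1) is complete and matches what the paper has in mind.

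The concern you raise about the case $F=C_{p^2}$ is a legitimate piece of bookkeeping that the paper does not spell out. Note, however, that it is narrower than you suggest: once you have replaced $K$ by $\Ac(W,C_{p^2},\xi,\alpha)$ via Theorem~\ref{clasification-qspaces}, the subgroup $\widehat{F}$ in Lemma~\ref{action-modcat} equals $C_p$, so both $\big({}_{\widehat{K}}\Mo\big)^{C_p}$ and $\big({}_{\widehat{\Ac(W,C_{p^2},\xi,\alpha)}}\Mo\big)^{C_p}$ are equivalent to ${}_K\Mo$ over $\Rep(H)$, and Proposition~\ref{mod-cat-equi}~(5) yields ${}_{\widehat{K}}\Mo\simeq{}_{\widehat{\Ac(W,C_{p^2},\xi,\alpha)}}\Mo$ over $\Rep(A(H,s))$. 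What remains is to identify $\widehat{\Ac(W,C_{p^2},\xi,\alpha)}$ with some $\widehat{\Ac}(W',\xi',\alpha')$ for a $C_p$-compatible datum. This amounts to computing the subalgebra $\lambda^{-1}(A(H,s)\otimes K)$: since $A(H,s)\cap\ku C_{p^2}=\ku\langle\sigma\rangle$ and the twisted coaction of the generators $v_i$ already lands in $A(H,s)\otimes K$ by the same calculation as in \cite[Prop.~3.1.1]{A}, the subalgebra $\widehat{K}$ is generated by the $v_i$ and the $e_{\sigma^j}$, and one checks directly that it has the form $\Ac(W',C_p,\xi',\alpha')$ for restricted data. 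The paper absorbs this verification into the blanket assertion that the result follows from Theorem~\ref{main6}.
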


\end{document}